\numberwithin{equation}{section}
\newtheorem{thm}{Theorem}[section]
\newtheorem{pro}[thm]{Proposition}
\newtheorem{lem}[thm]{Lemma}
\newtheorem{con}[thm]{Conjecture}
\newtheorem{rem}[thm]{Remark}
\newtheorem{defi}[thm]{Definition}
\newtheorem*{cor*}{Corollary}
\newtheorem*{thm*}{Theorem}
\title[Combinatorial invariance conjecture]{The combinatorial invariance conjecture for parabolic Kazhdan--Lusztig polynomials of lower intervals}
\author{Mario Marietti}
\address{Dipartimento  di Ingegneria Industriale e Scienze Matematiche, Universit\`a Politecnica delle Marche, Via Brecce Bianche, 60131 Ancona,  Italy}
\email{m.marietti@univpm.it}
\subjclass[2010]{20F55, 05E99}
\keywords{Kazhdan--Lusztig polynomials, Coxeter groups, Special matchings}
\begin{document}

\begin{abstract}
The aim of this work is to prove a conjecture related to  the Combinatorial Invariance Conjecture of  Kazhdan--Lusztig polynomials, in the parabolic setting, for lower intervals in every arbitrary Coxeter group. This result improves and generalizes, among other results,  the main results of [Advances in Math. {202} (2006), 555-601], [Trans. Amer. Math. Soc. {368} (2016), no. 7, 5247--5269]. 
\end{abstract}

\maketitle

\section{Introduction}
Kazhdan--Lusztig polynomials play a central role in Lie theory and representation theory. They are polynomials $P_{u,v}(q)$, in one variable $q$, which are associated to  pairs of elements $u,v$ in a Coxeter group $W$. 
They were defined  by Kazhdan and Lusztig in \cite{K-L} in order to introduce the (now called)  
Kazhdan--Lusztig representations of the Hecke algebra of $W$, and soon have found applications in many other contexts. 

Among others, the combinatorial aspects of Kazhdan--Lusztig polynomials have received much attention from the start, and are still a fascinating field of research. Recently, Elias and  Williamson \cite{E-W} proved the long-standing conjecture about the nonnegativity of  the coefficients of  Kazhdan--Lusztig polynomials of all Coxeter groups, thus generalizing the analogous result by Kazhdan and Lusztig  on finite and affine Weyl groups appearing in \cite{KL2}, where $P_{u,v}(q)$ is shown to be the Poincar\'e polynomial of the local intersection cohomology groups  of the Schubert variety associated with $v$ at any point of the Schubert variety associated with $u$ (in the full flag variety).

At present, from a combinatorial point of view, the most intriguing conjecture about Kazhdan--Lusztig polynomials  is arguably what is usually referred to as the Combinatorial Invariance Conjecture of Kazhdan--Lusztig polynomials. It was independently formulated by Lusztig in private and by Dyer in \cite{Dyeth}.
\begin
{con}
\label{comb-inv-con}
The Kazhdan--Lusztig polynomial $P_{u,v}(q)$ depends only on the isomorphism class of the interval $[u,v]$ as a poset. 
\end{con}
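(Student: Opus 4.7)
Since Conjecture \ref{comb-inv-con} is an outstanding open problem, I will sketch a strategy whose base case coincides with the lower-interval setting handled in the body of this paper, indicating where it must go beyond the current techniques and what I expect to be the essential obstruction. The plan is induction on $\ell(v)-\ell(u)$, driven by the standard Kazhdan--Lusztig recursion, which produces $P_{u,v}(q)$ from $P$-polynomials and $\mu$-coefficients of strictly shorter intervals once one has fixed a left descent $s$ of $v$. To be compatible with combinatorial invariance, at each recursive step every piece of data must be extractable from the poset $[u,v]$ alone. For lower intervals $[e,v]$ this is exactly the content of the theory of special matchings of Brenti, Caselli and Marietti: any special matching of the poset performs multiplication by such an $s$, and the present paper extends the picture to the parabolic lower-interval setting.

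The first step of the program is therefore to develop a theory of special matchings of an arbitrary Bruhat interval $[u,v]$ and to establish that (a) such matchings exist, (b) they can be used in place of multiplication by a common left descent of $u$ and $v$ in the recursion, and (c) the resulting polynomial is independent of the matching chosen. Granted (a)--(c), the second step is to run the induction: given $[u,v]$ and a special matching $M$, one defines poset-theoretic analogues of $[su,sv]$, $[u,sv]$ and of the subtraction terms $[u,w]$ (with $\mu(w,sv)\neq 0$) as distinguished subintervals of $[u,v]$, and then concludes by applying the inductive hypothesis to these strictly shorter intervals. An attractive shortcut would be the hypercube decomposition formula conjectured by Blundell, Buesing, Davies, Veli\v{c}kovi\'c and Williamson: if established, it presents $P_{u,v}(q)$ as a manifestly poset-invariant sum over certain intrinsically defined subintervals of $[u,v]$, and would let the inductive step be executed without ever choosing a matching.

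The main obstacle is step (a)--(b). When $u\neq e$ it is not known whether every Bruhat interval $[u,v]$ admits a special matching corresponding to simultaneous multiplication by a single reflection, nor whether every special matching in the purely order-theoretic sense of Brenti-Caselli-Marietti arises from such Coxeter-theoretic data; the calibration of the recursion using the matching is therefore much more delicate than in the lower case. Moreover, the summands $w$ with $\mu(w,sv)\neq 0$ generally sit outside $[u,v]$, so that to close the induction one must reinterpret them inside $[u,v]$ — via Dyer's reflection subgroups, via a hypercube-type decomposition, or via some genuinely new poset-invariant surrogate. Producing such a surrogate amounts to recovering the local Coxeter structure from the bare Bruhat poset, and it is precisely here that every presently known approach, including the one that succeeds for lower intervals in this paper, breaks down. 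A proof of Conjecture \ref{comb-inv-con} in the generality as stated will therefore require a substantially new idea beyond the lower-interval framework developed in the present work.
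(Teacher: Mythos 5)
The statement you were asked about is labelled as a \emph{Conjecture} in the paper: it is the Combinatorial Invariance Conjecture itself, which remains open, and the paper contains no proof of it. What the paper actually proves is only the special case of lower intervals $[e,v]$ (in the more general parabolic setting), via Theorems 1.3--1.5 and the machinery of $H$-special matchings. You correctly recognize this and, rightly, do not claim a proof; your sketch of a would-be inductive strategy and of the obstructions (existence and classification of special matchings of general intervals $[u,v]$ with $u\neq e$, and the calibration of the recursion by a purely order-theoretic matching) is an accurate description of why the known techniques, including those of this paper, do not extend. Two small corrections: first, a special matching of $[e,v]$ need not literally \emph{be} multiplication by a descent $s$ (the paper exhibits non-multiplication matchings); the theorem is that any special matching may be \emph{used in place of} such a multiplication in the recursion. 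Second, the elements $w$ with $\mu(w,sv)\neq 0$ that contribute to the $P$-recursion satisfy $u\leq w\leq sv<v$, so they lie inside $[u,v]$; the genuine difficulty is rather that $sv$ and $su$ themselves, and the choice of $s$, are not poset-intrinsic data of $[u,v]$ when $u\neq e$. Since there is no proof in the paper to compare against, no further assessment is possible.
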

The Combinatorial Invariance Conjecture of  Kazhdan--Lusztig polynomials is equivalent to the analogous conjecture on the combinatorial invariance of  Kazhdan--Lusztig $R$-polynomials. These  also are polynomials $R_{u,v}(q)$   indexed by a pair of elements $u,v$ in $W$ and were introduced by Kazhdan--Lusztig in the same article \cite{K-L}. The Kazhdan--Lusztig $R$-polynomials are equivalent to  the Kazhdan--Lusztig polynomials of $W$ (in a precise sense, see Remark~\ref{senso}).

 \bigskip
 
 In  \cite{Deo87}, for any choice of a subset $H\subseteq S$, Deodhar introduces two modules of the Hecke algebra of $W$, two parabolic analogues $\{P_{u,w}^{H,x} (q)\}_{u,w\in W^H}$ of the Kazhdan--Lusztig polynomials, and two parabolic analogues $\{R_{u,w}^{H,x} (q)\}_{u,w\in W^H}$ of the Kazhdan--Lusztig $R$-polynomials, one for $x=q$ and one for $x=-1$.  The parabolic Kazhdan--Lusztig and $R$-polynomials have deep algebraic and geometric significance; they are indexed by pairs of elements in the set $W^H$ of minimal coset representatives with respect to the standard parabolic subgroup $W_H$, and play, in the parabolic setting, a role that is parallel to the role that the ordinary Kazhdan--Lusztig and $R$-polynomials play in the ordinary setting. Moreover, they generalize the ordinary Kazhdan--Lusztig and $R$-polynomials since these are obtained  in the special trivial case when $H= \emptyset$ (for both $x=q$ and $-1)$. As in the ordinary case, the family of parabolic Kazhdan--Lusztig polynomials is equivalent to the family of parabolic Kazhdan--Lusztig $R$-polynomials.
 
 The problem of the combinatorial invariance of parabolic Kazhdan--Lusztig polynomials, which is stronger than the combinatorial invariance of the ordinary Kazhdan--Lusztig polynomials, has also attracted much attention (see, for instance, \cite{Bpac} and \cite{Btrans}).
 Only recently, however, the statement one gets by replacing the ordinary interval with the parabolic interval in Conjecture~\ref{comb-inv-con} has been found to be false (see \cite{BMS} and \cite{Mtrans} for counterexamples).
In \cite{Mtrans}, it is proposed that  the right approach to the generalization of  Conjecture~\ref{comb-inv-con}  to the parabolic setting could be studying to what extent the following conjecture is true.
\begin{con}
\label{comb-inv-con-parab}
Let $(W_1,S_1)$ and $(W_2,S_2)$ be two Coxeter systems, $H_1 \subseteq S_1$ and 
$H_2\subseteq S_2$. Let $u_1,v_1 \in W_1^{H_1}$ and $u_2,v_2 \in W_2^{H_2}$ be such that there exists 
a poset-isomorphism  from $[u_1,v_1]$ to $[u_2,v_2]$ that restricts to a 
poset-isomorphism from $[u_1,v_1]^{H_1}$ to $[u_2,v_2]^{H_2}$. Then $P_{u_1,v_1}^{H_1,x} (q)=P_{u_2,v_2}^{H_2,x} (q)$  (equivalently, $R_{u_1,v_1}^{H_1,x} (q)=R_{u_2,v_2}^{H_2,x} (q)$).
\end{con}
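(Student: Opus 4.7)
The plan is to work with the parabolic $R$-polynomials $R^{H,x}_{u,v}(q)$ rather than the $P$-polynomials, since under the equivalence alluded to in the paper it suffices to prove $R^{H_1,x}_{u_1,v_1}=R^{H_2,x}_{u_2,v_2}$. The recursive definition of these $R$-polynomials (given by right multiplication by a simple reflection) admits a conjectural generalization in which the simple reflection is replaced by a \emph{special matching} of the Bruhat interval $[u,v]$. The approach I would take is to build a parabolic version of the special-matchings formalism that operates on arbitrary intervals, not only on lower intervals, and is sensitive enough to the secondary data $[u,v]^H\subseteq [u,v]$ that a poset-isomorphism respecting that secondary stratum suffices to transport the resulting recursion.

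Concretely, I would proceed by induction on $\ell(v_1)-\ell(u_1)$, the base case being trivial. For the inductive step, I would introduce the notion of an \emph{$H$-compatible special matching} of $[u,v]$: a special matching $M$ of $[u,v]$ as a poset such that the subset $[u,v]^H$ behaves predictably under $M$ (for example, $M$ either preserves $[u,v]^H$ or folds it in a controlled way onto $[u,v]\setminus[u,v]^H$, with the type of fold detectable from the pair of posets). The key structural lemma I would aim to prove is twofold: (i) every interval $[u,v]$ with $u,v\in W^H$ admits at least one $H$-compatible special matching, and (ii) the action of any such matching on the pair $([u,v],[u,v]^H)$ is determined by the isomorphism class of that pair. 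Once these are in hand, one deduces a recursion for $R^{H,x}_{u,v}$ whose inputs are $R$-polynomials of strictly smaller $W^H$-intervals endowed with their own parabolic substrata; the inductive hypothesis, applied along the isomorphism transported from $[u_1,v_1]$ to $[u_2,v_2]$, then closes the argument.

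The main obstacle, and what makes Conjecture~\ref{comb-inv-con-parab} substantially harder than its lower-interval specialization, is part (i) of the structural lemma. For lower intervals $[e,v]$ one has a canonical supply of special matchings coming from multiplication by elements of the descent set, and one can invoke the classification of special matchings of lower Bruhat intervals due to Brenti--Caselli--Marietti and its parabolic refinements. For a general interval with $u\neq e$ no such canonical supply is available: the existence of special matchings on arbitrary Bruhat intervals is itself a central open problem closely tied to Conjecture~\ref{comb-inv-con}, and requiring the matching to be \emph{$H$-compatible} imposes a further restriction that need not be satisfiable without some additional hypothesis.

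A realistic retreat, and the one I would commit to in an actual proof, is therefore to restrict attention to the case $u_1=e_1$, $u_2=e_2$: there the existence theory for special matchings is complete, the parabolic stratum $[e,v]^H$ admits an intrinsic description in terms of the descents of the elements of $[e,v]$, and the inductive recursion described above can be carried through. Establishing the full Conjecture~\ref{comb-inv-con-parab} beyond this lower-interval reduction would require, at minimum, a new mechanism for producing $H$-compatible matchings on arbitrary intervals, and I expect this mechanism to be the genuine bottleneck.
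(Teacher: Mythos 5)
You should first note that the statement you were asked to prove is stated in the paper as a \emph{conjecture}: the paper does not prove it in general, and in fact proves it only for lower intervals ($u_1=e_1$, $u_2=e_2$). Your assessment that the general case is currently out of reach, and your retreat to the lower-interval case, therefore match exactly what the paper does, and the strategy you sketch for that case (parabolic $R$-polynomials, a parabolic notion of special matching, induction on length) is the paper's strategy; the paper's ``$H$-special matching'' is the precise form of your ``$H$-compatible special matching'' (namely: if $u\in W^H$ and $M(u)\lhd u$ then $M(u)\in W^H$).

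There is, however, a genuine gap in how your structural lemma is organized, and it hides where essentially all of the work lies. Your items (i) and (ii) --- existence of at least one $H$-compatible matching, and the fact that its action on the pair $([e,v],[e,v]^H)$ is determined by the isomorphism class of that pair --- are both easy: left multiplication matchings are always $H$-special, and (ii) is automatic because the definition of an $H$-special matching is purely poset-theoretic. What the argument actually requires is that \emph{every} $H$-special matching of $[e,v]$ satisfies the same three-case recursion as left multiplication (the paper's Theorem~\ref{computa}). This is forced on you by the transport step: a poset isomorphism $\psi$ carries a left multiplication matching of $[e_1,v_1]$ to an $H_2$-special matching of $[e_2,v_2]$ that in general is \emph{not} a multiplication matching, so you must know that this transported matching still computes $R^{H_2,x}$ correctly. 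Proving that is not a routine consequence of the Brenti--Caselli--Marietti classification; it occupies Sections~3--4 of the paper and needs the classification of special matchings by right and left systems (Theorem~\ref{caratteri}), the commutation criteria on lower dihedral intervals (Lemmas~\ref{commutano} and~\ref{5.3}), the reduction Theorem~\ref{orbite}, and --- specifically to go beyond the doubly laced case already treated in the earlier literature --- the new identities of Lemmas~\ref{sesingleton}, \ref{sepiena} and~\ref{protutti} expressing $R^{H,x}_{u,w}$ in terms of $R$-polynomials attached to the factorization of $u$ and $w$ along a dihedral standard parabolic coset, together with the case analysis of Proposition~\ref{partizione} on how $W^H$ meets such a coset. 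None of this is visible in your sketch, so even for the lower-interval case the proposal records the correct target but not a proof.
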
  
Clearly, Conjecture~\ref{comb-inv-con-parab} reduces to Conjecture~\ref{comb-inv-con} for $H_1=H_2=\emptyset$.  

Conjecture~\ref{comb-inv-con} and Conjecture~\ref{comb-inv-con-parab}, if true, would have  interesting implications in the many contexts where ordinary and parabolic Kazhdan--Lusztig polynomials have applications. Among them, one of the most fascinating and (according to many experts in the field) surprising consequences would be in the topology of Schubert varieties of full and partial flag varieties.  For the full flag variety, we refer the reader to the discussion in \cite[\S 3]{Bslc}. For its generalization to the partial flag variety, the reader should have in mind the results by Kashiwara and Tanisaki \cite{KT} showing the role  of the parabolic Kazhdan--Lusztig polynomials for the Schubert varieties of the partial flag variety.

In \cite{Mtrans}, Conjecture~\ref{comb-inv-con-parab} is proved to hold true  for lower intervals  (that is, when $u_1$ and $u_2$ are the identity elements), in  the case of doubly laced Coxeter groups (and in the case of dihedral Coxeter groups, which is much easier). The aim of this work is to prove the following more general result.
\begin{thm}
\label{elena}
Conjecture~\ref{comb-inv-con-parab} holds true for all lower intervals in  every arbitrary Coxeter group.  
\end{thm}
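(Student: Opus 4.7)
The plan is to prove Theorem~\ref{elena} by developing a theory of \emph{parabolic special matchings} of lower intervals and showing that the parabolic $R$-polynomials (which by the parabolic analogue of Remark~\ref{senso} determine the parabolic Kazhdan--Lusztig polynomials) of a lower interval $[e,v]^{H}$ can be computed by a recursion whose ingredients are visible purely from the pair of posets $([e,v],\,[e,v]^H)$. The rough strategy is modeled on Brenti's special matching recursion for ordinary $R$-polynomials on lower intervals (the engine of \cite{Bslc}) and on its doubly laced parabolic version established in \cite{Mtrans}; the present task is to extend this framework to arbitrary Coxeter groups in a way compatible with Deodhar's parabolic setup.

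First I would introduce the notion of an \emph{$H$-special matching} of a lower interval $[e,v]$: a special matching $M$ of $[e,v]$ in the usual sense of Brenti whose restriction to $[e,v]^H$ is coherent with the parabolic structure, in the sense that for every $u\in[e,v]$ the pair $\{u,M(u)\}$ is either contained in $[e,v]^H$, disjoint from $[e,v]^H$, or interacts with $[e,v]^H$ in one of a few controlled ways that can be detected from the poset isomorphism class of the pair $([e,v],[e,v]^H)$. I would then prove a parabolic recursion: if $M$ is an $H$-special matching of $[e,v]$ and $u\in[e,v]^H$, then $R^{H,x}_{u,v}(q)$ can be written as a sum/difference of parabolic $R$-polynomials of strictly shorter elements, together with a correction term indexed by whether $u$ and $M(u)$ lie in $W^{H}$ and whether $M(u)\gtrdot u$ or $M(u)\lessdot u$. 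For $H=\emptyset$ this is exactly Brenti's recursion of \cite{Bslc}, and for doubly laced groups it specializes to the recursion used in \cite{Mtrans}.

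The main obstacle, and the step I expect to demand the most work, is the \emph{existence} of an $H$-special matching of every lower interval $[e,v]$ in an \emph{arbitrary} Coxeter group. In the ordinary setting one has an abundance of special matchings coming from multiplication by a descent $s\in D_L(v)$; the difficulty is that this matching need not be $H$-special, since left multiplication by $s$ does not in general preserve $W^H$. In the doubly laced case treated in \cite{Mtrans} the tight control over short braid relations ensures that some choice of $s$, or some two-step composite constructed from two generators, produces an $H$-special matching; for an arbitrary Coxeter group one has to rule out longer obstructions coming from braid relations of higher order. I plan to argue by a careful case analysis on the coset decomposition $W=W^{H}W_H$ combined with the lifting property of the Bruhat order, producing either a left-descent-type matching that is already $H$-special or a suitable cycle matching constructed from the subword structure of $v$ that does the job.

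Once existence of an $H$-special matching is established, the proof of Theorem~\ref{elena} follows by induction on $\ell(v)$. Given $v_1, v_2$ and a poset isomorphism $\phi:[e,v_1]\to[e,v_2]$ that restricts to a poset isomorphism $[e,v_1]^{H_1}\to[e,v_2]^{H_2}$, the combinatorial characterization of $H$-special matchings shows that $\phi$ transports any $H_1$-special matching $M$ of $[e,v_1]$ to an $H_2$-special matching $\phi\circ M\circ\phi^{-1}$ of $[e,v_2]$. Applying the parabolic recursion on both sides and using that the isomorphism $\phi$ restricts further to isomorphisms of the relevant shorter lower sub-intervals (together with their parabolic parts), one reduces to cases covered by the inductive hypothesis, concluding $R^{H_1,x}_{e,v_1}(q)=R^{H_2,x}_{e,v_2}(q)$ and hence the equality of parabolic Kazhdan--Lusztig polynomials required by Conjecture~\ref{comb-inv-con-parab}.
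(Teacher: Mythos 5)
There is a genuine gap, and it comes from misidentifying where the difficulty lies. The step you single out as the main obstacle --- the \emph{existence} of an $H$-special matching of every lower interval in an arbitrary Coxeter group --- is in fact trivial: for any $s\in D_L(v)$ the left multiplication matching $\lambda_s$ is automatically $H$-special, because the defining condition only requires that when $u\in W^H$ and $M(u)\lhd u$ one has $M(u)\in W^H$, and it is a standard fact (used in the paper's Proposition~\ref{partizione}) that $su\in W^H$ whenever $u\in W^H$ and $s\in D_L(u)$. Your remark that left multiplication "does not in general preserve $W^H$" concerns the upward direction, which the definition does not constrain. The real content, which your proposal passes over with ``I would then prove a parabolic recursion,'' is that \emph{every} $H$-special matching --- in particular the non-multiplication ones obtained by transporting $\lambda_s$ through the poset isomorphism $\phi$ --- satisfies the recursion of Theorem~\ref{computa}. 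Without this, your final transport argument does not close: $\phi\circ M\circ\phi^{-1}$ is an $H_2$-special matching but in general not a multiplication matching, so Deodhar's recursion (Theorem~\ref{7.1}) cannot be applied to it directly, and nothing in your sketch supplies the missing calculation. Proving this for arbitrary Coxeter groups is precisely where the paper's work lies: the classification of special matchings via right and left systems (Theorem~\ref{caratteri}), the commutation criterion and Theorem~\ref{orbite}, and the new identities for parabolic $R$-polynomials along cosets of dihedral standard parabolic subgroups (Lemmas~\ref{sesingleton}, \ref{sepiena}, \ref{protutti}), followed by a long case analysis on $\ell(w_2)-\ell(u_2)$; none of this is replaced by your proposed ``careful case analysis on the coset decomposition'' aimed at an existence statement that needs no proof.

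Two secondary points. First, your definition sketch of $H$-special matching (``interacts with $[e,v]^H$ in one of a few controlled ways'') is too loose to support the claim that $\phi$ transports $H_1$-special matchings to $H_2$-special matchings; the precise condition (if $u\in W^H$ and $M(u)\lhd u$ then $M(u)\in W^H$) is what makes that transport immediate, since it is phrased purely in terms of the pair $([e,v],[e,v]^H)$. Second, concluding only $R^{H_1,x}_{e,v_1}=R^{H_2,x}_{e,v_2}$ is not enough for the conjecture: to get $P^{H_1,x}_{e_1,v_1}=P^{H_2,x}_{e_2,v_2}$ via Theorem~\ref{7.2} you need the equality $R^{H_1,x}_{u,w}=R^{H_2,x}_{\psi(u),\psi(w)}$ for all pairs $u,w\in[e_1,v_1]^{H_1}$, which is what the paper's Theorem~\ref{congettura} records; your induction would likely yield this if executed, but it should be stated as the goal.
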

(Another new piece of evidence in favor of Conjecture~\ref{comb-inv-con-parab} was recently given by Brenti in \cite{Bnuovo}).

Indeed, we prove the following slightly more general result.
\begin{thm}
\label{congettura}
Let $(W_1,S_1)$ and $(W_2,S_2)$ be two arbitrary Coxeter systems, with identity elements $e_1$ and $e_2$, and let $H_1 \subseteq S_1$ and 
$H_2\subseteq S_2$. Let $v_1 \in W_1^{H_1}$ and $v_2 \in W_2^{H_2}$ be such that there exists 
a poset-isomorphism  $\psi$ from $[e_1,v_1]$ to $[e_2,v_2]$ that restricts to a 
poset-isomorphism from $[e_1,v_1]^{H_1}$ to $[e_2,v_2]^{H_2}$. Then, for all $u,w \in [e_1,v_1]^{H_1}$, we have 
$$P_{u,w}^{H_1,x} (q)=P_{\psi(u),\psi(w)}^{H_2,x} (q)   \quad \text{ and } \quad R_{u,w}^{H_1,x} (q)=R_{\psi(u),\psi(w)}^{H_2,x} (q).$$
\end{thm}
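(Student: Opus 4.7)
The plan is to attack Theorem~\ref{congettura} by the machinery of special matchings, extending the strategy used in \cite{Btrans} for the combinatorial invariance of ordinary Kazhdan--Lusztig polynomials on lower intervals, and in \cite{Mtrans} for the parabolic case of doubly laced Coxeter groups. Recall that a special matching of a graded poset $P$ is a fixed-point-free involution $M:P\to P$ which is an automorphism of the Hasse diagram and such that, whenever $x\lessdot y$ and $M(x)\ne y$, one has $M(x)<M(y)$. Because this definition is purely order-theoretic, the isomorphism $\psi$ transports special matchings of $[e_1,v_1]$ to special matchings of $[e_2,v_2]$, and, by the assumption that $\psi$ restricts to an isomorphism of the parabolic sub-posets, it also preserves the property that a matching is compatible with $[e,v]^{H}$ in any order-theoretic sense one cares to specify.

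The heart of the proof will be a purely poset-theoretic recursion for the parabolic $R$-polynomials. I will show that, for every special matching $M$ of $[e,v]$, the polynomial $R^{H,x}_{u,w}(q)$ is determined by the analogous $R^{H,x}_{u',w'}(q)$ attached to strictly smaller pairs, by a rule depending only on the triple $([e,v],\,[e,v]^H,\,M)$. The derivation splits into cases according to whether $M(w)$ belongs to $W^H$ or to $W\setminus W^H$, and further according to the position of $M(u)$ relative to $u$ and to $W^H$; in each case, the rule is obtained by working inside Deodhar's two parabolic Hecke modules (for $x=q$ and $x=-1$) and expanding the action of a generator that realizes $M$ along a suitable maximal chain. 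With this recursion in place, the theorem follows by induction on $\ell(v)$: for $v\ne e$ a special matching always exists (e.g.\ the one induced by right multiplication by any $s\in D_R(v)$), the recursion applies on both sides of $\psi$, and the inductive hypothesis identifies the smaller parabolic $R$-polynomials. The equality of the parabolic $P$-polynomials then follows from that of the parabolic $R$-polynomials in the usual way.

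The main obstacle will be establishing the parabolic recursion for an \emph{arbitrary} special matching of $[e,v]$, rather than only for the matching induced by right multiplication by a simple reflection. In the non-parabolic setting this extension is the technical core of \cite{Btrans}; in the parabolic setting one has to superimpose on it the case split governed by the interaction of $M$ with $W^H$, together with the opposite signs that appear in Deodhar's modules for $x=q$ and $x=-1$. Carrying out this case analysis uniformly in every Coxeter group, without the doubly laced restriction of \cite{Mtrans}, is the central difficulty of the proof, and it is here that the compatibility of $\psi$ with the parabolic sub-interval enters in an essential way.
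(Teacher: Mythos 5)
Your overall strategy coincides with the paper's: transport parabolic-compatible special matchings through $\psi$, show that any such matching can replace left multiplication in Deodhar's recursion for $R^{H,x}_{u,w}$, and iterate; the equality of the $P$-polynomials then follows as you say. But the proposal stops exactly where the proof has to begin. The claim that for \emph{every} special matching $M$ compatible with $[e,v]^{H}$ the polynomial $R^{H,x}_{u,w}$ satisfies a recursion depending only on the triple $([e,v],[e,v]^{H},M)$ is precisely Theorem~\ref{computa}, and you offer no argument for it: the proposed mechanism of ``expanding the action of a generator that realizes $M$'' in Deodhar's modules is not available, because a general special matching is \emph{not} realized by multiplication by any simple reflection (nor by any Hecke-algebra generator) --- if it were, the recursion would be an instance of Theorem~\ref{7.1} and there would be nothing to prove. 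You yourself label this step ``the main obstacle'' and ``the central difficulty,'' which accurately describes a step that is announced rather than carried out; as it stands the argument is a plan, not a proof.

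For comparison, the paper's proof of that missing step rests on substantial machinery: the classification of special matchings of lower intervals by right and left systems (Theorem~\ref{caratteri}, from \cite{Mchara}); the induction tool of \cite{Mtrans} that an $H$-special matching commuting with a calculating matching $N$ of $w$ with $N(w)\neq M(w)$ is itself calculating (Theorem~\ref{orbite}); the analysis of the possible intersections $(W_{\{s,t\}}\cdot x)\cap W^{H}$ (Proposition~\ref{partizione}); and new identities expressing $R^{H,x}_{u,w}$ through a dihedral factorization $w=w_2\cdot w_3$, $w_2\in W_{\{s,t\}}$, $w_3\in {}^{\{s,t\}}W$ (Lemmas~\ref{sesingleton}, \ref{sepiena}, \ref{protutti}), combined with the identity $(q-1)(q-1-x)+q=(q-1-x)^2$, valid only for $x\in\{-1,q\}$, and a case analysis that uses $H$-specialness in an essential way (e.g.\ via Lemma~\ref{nicola}). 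None of this, nor any substitute for it, appears in your outline. A further small inaccuracy: in the parabolic setting right multiplication matchings $\rho_s$ need not be $H$-special (only left multiplications are automatically so), so your parenthetical existence remark is off --- though existence is not the real issue, since the recursion must be established for the arbitrary matching transported by $\psi$.
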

As a corollary, the parabolic Kazhdan--Lusztig polynomial $ P_{u,w}^{H,x} (q)$ and $R$-polynomial $ R_{u,w}^{H,x} (q)$ are determined by the isomorphism class of the interval $[e,w]$ and by how the parabolic interval $[e,w]^H=[e,w]\cap W^H$ embeds in $[e,w]$.

Theorem \ref{congettura} is proved by providing an explicit method to compute the parabolic Kazhdan--Lusztig $R$-polynomials $R_{u,w}^{H,x}(q)$ (and so also the parabolic Kazhdan--Lusztig $P_{u,w}^{H,x}(q)$).
This method is based on the concept of an \emph{$H$-special matching} introduced in \cite{Mtrans}: an $H$-special matching of $w$ is an involution
\( M:[e,w]\rightarrow [e,w] \) such that 
\begin{enumerate}
\item either \( u \lhd M(u)\) or  \( u \rhd M(u)\), for all \( u\in [e,w]  \),
\item  if $u_1\lhd u_2$ then  $M(u_1)\leq M(u_2),$
 for all \( u_1,u_2\in [e,w] \) such that \( M(u_1)\neq u_2 \),
 \item if $u \leq w$, $u \in W^H$, and $M(u) \lhd u$, then  $M(u) \in W^H.$
\end{enumerate}
(We denote by $\leq$ the Bruhat order and write $x\lhd y$ to mean that 
$x$ is an immediate predecessor of $y$).

The set of all $H$-special matchings of $w$ depends  only on the isomorphism class  of the interval $[e,w]$ and on how the parabolic interval $[e,w]^{H}$ embeds in $[e,w]$. 
We prove  that $H$-special matchings may be used in place of 
left multiplications in the recurrence formula that computes the parabolic Kazhdan--Lusztig $R$-polynomials.
\begin{thm}
\label{computa}
If $M$ is an $H$-special matching of $w$, then the parabolic Kazhdan--Lusztig $R$-polynomial $R_{u,w}(q)$ satisfies: 
\begin{equation}
 \label{calcola}
 R_{u,w}^{H,x} (q)= \left\{ \begin{array}{ll}
R_{M(u),M(w)}^{H,x}(q), & \mbox{if $M(u)  \lhd u$,} \\
(q-1)R_{u,M(w)}^{H,x}(q)+qR_{M(u),M(w)}^{H,x}(q), & \mbox{if $M(u) \rhd u$
and $M(u) \in W^{H}$,} \\
(q-1-x)R_{u,M(w)}^{H,x}(q), & \mbox{if $M(u) \rhd u$ and  $M(u) \notin W^{H}$.} 
\end{array} \right. 
\end{equation}
\end{thm}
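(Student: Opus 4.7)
The plan is to induct on $\ell(w)$, the base case being immediate since the interval $[e,w]$ has no matchings when $\ell(w)=0$ and a unique one when $\ell(w)=1$. For the inductive step, I would fix a simple reflection $s$ with $sw\lhd w$ and let $L_s$ denote the matching of $[e,w]$ given by left multiplication by $s$. By Deodhar's original definition in \cite{Deo87} of the parabolic $R$-polynomials via the two $H$-Hecke modules, the formula (\ref{calcola}) is known to hold with $L_s$ in place of $M$: this is essentially the defining recurrence. Hence what must be proved is that any $H$-special matching $M$ of $w$ produces the same right-hand side as $L_s$.

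If $M=L_s$ there is nothing to check. Otherwise, I would analyze the structure of $[e,w]$ under the action of the group $\langle M,L_s\rangle$. Conditions (1) and (2) of the definition of an $H$-special matching force each orbit to be a dihedral subinterval on which both $M$ and $L_s$ behave as left multiplications by the two Coxeter generators of a rank-two parabolic subgroup; this is the \emph{two special matchings generate dihedral orbits} phenomenon familiar from the ordinary theory. Using this decomposition, I would construct from $M$ an auxiliary $H$-special matching $M'$ of the shorter element $M(w)$ (obtained by conjugating $M$ with $L_s$ on each orbit), verify that $M'$ satisfies axioms (1)--(3), and apply the inductive hypothesis to it. The identity (\ref{calcola}) for $M$ on $[e,w]$ then reduces to the identity for $M'$ on $[e,M(w)]$ combined with Deodhar's recurrence for $L_s$, so the verification becomes a finite check on each dihedral orbit.

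The main obstacle, genuinely new in the parabolic setting, is the trichotomy appearing in (\ref{calcola}): one must check that the three alternatives (according to whether $M(u)\lhd u$, or $M(u)\rhd u$ with $M(u)\in W^H$, or $M(u)\rhd u$ with $M(u)\notin W^H$) remain consistent when $M$ is swapped for $L_s$ along a dihedral orbit. Condition (3) of the definition of an $H$-special matching is designed precisely for this purpose: it forces the subset $[e,w]\cap W^H$ to interact with the matching in the rigid way required to transfer Deodhar's three cases from $L_s$ to $M$. I therefore expect the bulk of the argument to be a careful case analysis on each dihedral orbit, distinguishing whether the orbit lies entirely in $W^H$, meets it only at the minimal coset representatives of the ambient rank-two group, or avoids $W^H$ altogether. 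Reconciling the factors $(q-1-x)$ produced on the $M$-side with those produced on the $L_s$-side in the third case is, I anticipate, the most delicate point of the bookkeeping.
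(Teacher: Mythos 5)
Your outline only reaches the easy half of the argument, and it treats the genuinely hard half as routine bookkeeping. The reduction you sketch -- pair $M$ with a left multiplication matching $L_s$, decompose $[e,w]$ into dihedral orbits, conjugate to get an auxiliary matching of a shorter element, and invoke induction -- is essentially the content of \cite[Theorem 4.2]{Mtrans} (Theorem~\ref{orbite} in the paper), but that reduction requires the two matchings to \emph{commute} and to satisfy $M(w)\neq L_s(w)$; without commutation, $M$ does not restrict to the smaller interval and your ``conjugated'' $M'$ is not a well-defined special matching of $[e,M(w)]$, let alone an $H$-special one. Deciding when such a commuting multiplication matching with a different value at $w$ exists is not automatic: the paper must first invoke the classification of special matchings by right and left systems (Theorem~\ref{caratteri}) and use Properties R3--R5/L3--L5 together with Lemma~\ref{commutano} to produce a suitable $\lambda_l$ when $(w^{J})^{\{s,t\}}\neq e$ (resp.\ $(_{J}w)^{\{s\}}\neq e$). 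Your proposal never verifies commutativity and never isolates the case in which no such $\lambda_l$ exists.

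That remaining case -- a right system with $(w^{J})^{\{s,t\}}=e$ or a left system with $(_{J}w)^{\{s\}}=e$ -- is where all of the paper's new content lies, and your ``finite check on each dihedral orbit'' does not engage with it. There one writes $w=w_2\cdot w_3$, $u=u_2\cdot u_3$ with $w_2,u_2\in W_{\{s,t\}}$ and $w_3,u_3\in{}^{\{s,t\}}W$, classifies how the coset $W_{\{s,t\}}\cdot u_3$ meets $W^H$ (Proposition~\ref{partizione}: empty, singleton, chain, or full), and proves explicit closed formulas expressing $R^{H,x}_{u,w}$ in terms of $R^{H,x}_{u_3,w_3}$ and $R^{H,x}_{pu_3,w_3}$ for every value of $\ell(w_2)-\ell(u_2)$ (Lemmas~\ref{sesingleton}, \ref{sepiena}, \ref{protutti}), plus Lemma~\ref{nicola} to rule out degenerate configurations via the $H$-special condition. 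Matching the three alternatives of (\ref{calcola}) across these formulas repeatedly uses the identity $(q-1)(q-1-x)+q=(q-1-x)^2$, i.e.\ the hypothesis $x\in\{-1,q\}$, which your argument never touches -- a sign that the ``delicate bookkeeping'' you defer cannot be carried out at the level of generality you describe. So the proposal is not a proof: it is missing both the commutation analysis that makes the orbit reduction legitimate and the entire non-reducible case.
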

Theorem \ref{computa} directly implies  Theorem \ref{congettura}. Indeed, suppose the hypotheses of Theorem \ref{congettura} are fulfilled: thus $M$ is an $H_1$-special matching of $w$ if and only if  $M'=\psi \circ M\circ \psi^{-1}$ is an $H_2$-special matching of $\psi(w)$. We choose such a matching $M$ and apply (\ref{calcola}) to  both $M$ and $M'$: in both computations, we fall in the same case. By iteration, we get the assertion of Theorem \ref{congettura}.

Theorems \ref{elena}, \ref{congettura} and \ref{computa} improve and generalize several results in the literature such as, for instance, the main results of \cite{BCM1}, \cite{DC}, \cite{Mtrans}, \cite{Tel}.

\bigskip 
Since a special matching $M$ is uniquely determined by its action on the dihedral intervals containing the Coxeter generator $M(e)$, special matchings of doubly laced Coxeter groups  are more easily controlled than special matchings of arbitrary Coxeter groups. 
Therefore, a deeper analysis on parabolic Kazhdan--Lusztig $R$-polynomials is needed to prove the result for arbitrary Coxeter groups.
Indeed, we use several new identities 
among which, in particular, certain relations relating different parabolic Kazhdan--Lusztig $R$-polynomials indexed by elements in the same coset of dihedral standard parabolic subgroups.

The rest of the paper is devoted to the proof of   Theorem \ref{computa}.

\bigskip

\section{Notation, definitions and preliminaries}

This section reviews the background material that is needed  in the rest of this work.
We  follow   \cite{BB} and \cite[Chapter 3]{StaEC1} for undefined notation and 
terminology concerning, respectively, Coxeter groups and  partially ordered sets.

\subsection{Coxeter groups}
We fix our notation on a Coxeter system $(W,S)$ in the following list:
\smallskip 

{\renewcommand{\arraystretch}{1.2}
$
\begin{array}{@{\hskip-1.3pt}l@{\qquad}l}
m_{s,t} &  \textrm{the entry of the Coxeter matrix of  $(W,S)$ in position $(s,t)\in S\times S$}, 
\\
e &  \textrm{identity of $W$}, 
\\
\ell  &  \textrm{the length function of $(W,S)$},
\\
T &= \{ w s w ^{-1} : w \in W, \; s \in S \},  \textrm{  the set of {\em reflections} of $W$},
\\
D_R(w) & =\{ s \in S : \; \ell(w  s) < \ell(w ) \},  \textrm{ the right descent set of $w\in W$},
\\
D_L(w) & =\{ s \in S : \; \ell( sw) < \ell(w ) \},  \textrm{ the left descent set of $w\in W$},
\\
W_J & \textrm{ the parabolic subgroup of $W$ generated by $J\subseteq S$},
\\
W^J &=\{ w \in W \, : \; D_{R}(w)\subseteq S\setminus J \}, \textrm{ the set of minimal left coset representatives},
\\
^{J} W &=\{ w \in W \, : \; D_{L}(w)\subseteq S\setminus J \}, 
\textrm{ the set of minimal right coset representatives},
\\
\leq & \textrm{ Bruhat order on $W$ (as well as usual order on $\mathbb R$)},
\\
\textrm{$[u,v]$} & =\{ w \in W \, : \; u \leq w \leq v \}, \textrm{ the (Bruhat) interval generated by $u,v\in W$},
\\
w_0(J) &  \textrm{ the unique maximal  element of $[e,w]\cap W_{J}$, for $J\subseteq  S$},
\\
w_0(s,t) & =w_0(\{s,t\}),  \textrm{  for $s,t\in S$},
\\
\textrm{ $[u,v]^{H}$} &  = \{ z \in W^{H}: \;  u \leq z \leq v \}, \textrm{ the parabolic interval generated by $u,v\in W^H$}. 

\end{array}$
\bigskip

Given $u,v\in W$, we write $u\cdot v$ instead of simply $uv$ when $\ell(uv)=\ell(u)+\ell(v)$ and we want to stress this additivity. On the other hand, when we write $uv$, $\ell(uv)$ can be either $ \ell(u)+\ell(v)$ or smaller.
We make use of the symbol ``$\textrm{-}$''  to separate letters in a word in the alphabet $S$ when we want to stress the fact that we are considering the word rather than the element such word represents.

If $w\in W$, then a {\em reduced expression} for $w$ is a word $s_1\textrm{-}s_2\textrm{-}\cdots \textrm{-}s_q$ such that $w=s_1s_2\cdots s_q$ and $\ell(w)=q$. When no confusion arises, we also write that $s_1s_2\cdots s_q$ is a reduced expression for $w$.

The {\em Bruhat graph}
of $W$ (see \cite{Dye2}, or, e.g., \cite[\S 2.1]{BB} or \cite[\S 8.6]{Hum}) is the directed graph
having $W$ as vertex set and having a directed edge from $u$ to $v$ if and only if $u^{-1}v \in T$ and $\ell (u)<\ell (v)$.  
The {\em Bruhat order} (see, e.g.,  \cite[\S 2.1]{BB} or \cite[\S 5.9]{Hum}), sometimes also called {\em Bruhat-Chevalley order},  is the partial order $\leq $ on $W$ given by the transitive closure of the Bruhat graph of $W$

The following  well-known characterization of Bruhat order
is usually referred to as the {\em Subword Property} (see \cite[\S 2.2]{BB} or \cite[\S 5.10]{Hum}), and is used repeatedly in the following sections,
often without explicit mention. 
By a {\em subword} of a word $s_{1}\textrm{-}s_{2} \textrm{-} \cdots \textrm{-} s_{q}$, we mean
a word of the form
$s_{i_{1}}\textrm{-} s_{i_{2}}\textrm{-} \cdots \textrm{-} s_{i_{k}}$, where $1 \leq i_{1}< \cdots
< i_{k} \leq q$.
\begin{thm}[Subword Property]
\label{subword}
Let $u,w \in W$. The following are equivalent:
\begin{itemize}
\item $u \leq w$ in the Bruhat order,
\item  every reduced expression for $w$ has a subword that is 
a reduced expression for $u$,
\item there exists a  reduced expression for $w$ having a subword that is 
a reduced expression for $u$.
\end{itemize}
\end{thm}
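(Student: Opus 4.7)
The plan is to establish the three equivalences cyclically: (ii) $\Rightarrow$ (iii) $\Rightarrow$ (i) $\Rightarrow$ (ii). The first implication is immediate, since $W$ admits reduced expressions for every element. Both of the remaining implications proceed by induction on $\ell(w)$, powered by the Exchange / Deletion / Strong Exchange Property of Coxeter groups.

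For (iii) $\Rightarrow$ (i), the strategy is to peel off the unused letters one at a time. Fix a reduced expression $s_1 s_2 \cdots s_q$ for $w$ and a reduced subword $s_{i_1} \cdots s_{i_k}$ representing $u$, and choose an index $j \in \{1, \ldots, q\} \setminus \{i_1, \ldots, i_k\}$. The word $s_1 \cdots \widehat{s_j} \cdots s_q$ represents $wt$, where $t = (s_q s_{q-1} \cdots s_{j+1}) s_j (s_{j+1} \cdots s_{q-1} s_q) \in T$ is a reflection. If this shortened word is reduced, then $wt \lhd w$ in Bruhat order. If not, the Deletion Property allows the removal of two further letters, producing a reduced word for the same element of strictly smaller length, so that $wt < w$ via a chain of reflection covers. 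Since the subword $s_{i_1} \cdots s_{i_k}$ is itself reduced and survives each deletion, after at most $q - k$ iterations one reaches $u$, proving $u \leq w$.

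For (i) $\Rightarrow$ (ii), I would induct on $q = \ell(w)$; the case $q = 0$ is trivial. Let $s_1 \cdots s_q$ be a reduced expression for $w$ and set $s = s_q$, $w' = ws$. If $u \leq w'$, the inductive hypothesis applied to the reduced expression $s_1 \cdots s_{q-1}$ of $w'$ produces a reduced subword for $u$, which is a fortiori a subword of $s_1 \cdots s_q$. Otherwise, the Lifting Property (a consequence of the Strong Exchange Property) forces $s \in D_R(u)$ and $us \leq w'$; by induction a reduced subword of $s_1 \cdots s_{q-1}$ spells a reduced expression for $us$, and appending $s_q$ yields a reduced subword of $s_1 \cdots s_q$ spelling a reduced expression for $u$.

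The genuinely hard ingredient is the Lifting Property, which rests on the Strong Exchange Property; the standard argument uses the geometric (reflection) representation of $(W,S)$ to track how many positive roots an element of $W$ sends to negative roots. This is foundational material that the paper can legitimately quote from \cite{BB}.
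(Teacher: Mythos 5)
Your implications (ii) $\Rightarrow$ (iii) and (i) $\Rightarrow$ (ii) are fine; the latter is the standard induction on $\ell(w)$ that peels off the last letter and invokes the Lifting Property (Lemma~\ref{ll}), which is exactly how the sources the paper cites (\cite[\S 2.2]{BB}, \cite[\S 5.10]{Hum}) argue -- the paper itself states Theorem~\ref{subword} as classical background without proof, so those references are the benchmark.

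The gap is in (iii) $\Rightarrow$ (i). You delete an \emph{arbitrary} unused letter $s_j$ and claim that, when the shortened word $s_1\cdots\widehat{s_j}\cdots s_q$ is not reduced, the Deletion Property removes two further letters while the reduced subword spelling $u$ ``survives each deletion''. Nothing guarantees this, and in fact it can fail for every possible application of the Deletion Property, because the intermediate element $wt$ need not lie above $u$ at all. Concretely, take $W$ of type $A_2$ with $S=\{s,t\}$, $w=sts$ with reduced word $s\textrm{-}t\textrm{-}s$, and $u=s$ read off from position $1$. Choosing the unused index $j=2$ gives the word $s\textrm{-}s$, which represents $wt'=e$ (here $t'=s_3s_2s_3=sts$); it is not reduced, the only deletion removes both remaining letters (hence the chosen subword), and indeed $u=s\not\leq e$, so no bookkeeping of deletions can keep the iteration sound. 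Thus the scheme works only if the unused letter is chosen carefully, and proving that a good choice always exists is essentially the content of the theorem. The standard repair is to run the same one-letter induction you used for (i) $\Rightarrow$ (ii): if the reduced subword for $u$ uses position $1$, write $u=s_1u'$, note that $u'$ has a reduced word inside $s_2\cdots s_q$, which is reduced for $s_1w$, get $u'\leq s_1w$ by induction, and conclude $u=s_1u'\leq w$ from the left version of the third case of Lemma~\ref{ll} (since $s_1\in D_L(w)$ and $s_1\notin D_L(u')$); if the subword avoids position $1$, induction gives directly $u\leq s_1w<w$. With that replacement the proof is complete and coincides with the classical argument.
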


The following results are well known (see, e.g.,  \cite[Theorem~1.1]{Deo77},  \cite[Proposition~2.2.7]{BB} or \cite[Proposition~5.9]{Hum} for the first one,  \cite[\S 2.4]{BB} or \cite[\S 1.10]{Hum} for the second one, and \cite[Lemma~7]{Hom74} for the third one).
\begin{lem}[Lifting Property]
\label{ll}
Let $s\in S$ and $u,w\in W$, $u\leq w$.
\begin{enumerate}
\item[-] If $s\in D_R(w)$ and $s\in D_R(u)$, then $us\leq ws$.
\item[-] If $s\notin D_R(w)$ and $s \notin D_R(u)$, then $us\leq ws$.
\item[-] If $s\in D_R(w)$ and $s\notin D_R(u)$, then $us\leq w$ and $u\leq ws$.
\end{enumerate}
Symmetrically, left versions of the three statements hold.
\end{lem}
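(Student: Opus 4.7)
The plan is to prove the three right-hand statements by invoking the Subword Property (Theorem~\ref{subword}), and then to obtain the left-hand versions by applying the right-hand statements to $u^{-1}\leq w^{-1}$; this is legitimate because inversion is a length-preserving poset automorphism of Bruhat order with $D_L(z)=D_R(z^{-1})$. The key case is (3); statements (1) and (2) will reduce to it by short length arguments.

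To prove (3), assume $s\in D_R(w)$ and $s\notin D_R(u)$. Since $ws<w$, one can fix a reduced expression for $w$ of the form $s_1\textrm{-}s_2\textrm{-}\cdots\textrm{-}s_{q-1}\textrm{-}s$, obtained by appending $s$ to a reduced expression for $ws$. By the Subword Property applied to $u\leq w$, this word contains a subword that is a reduced expression for $u$. This subword cannot involve the final letter $s$: otherwise $u$ would admit a reduced expression ending in $s$, whence $\ell(us)<\ell(u)$, contradicting $s\notin D_R(u)$. Therefore the subword lies entirely inside $s_1\textrm{-}\cdots\textrm{-}s_{q-1}$, which is itself a reduced expression for $ws$; hence $u\leq ws$. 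Appending $s$ to this subword produces a word of length $\ell(u)+1=\ell(us)$, i.e., a reduced expression for $us$, exhibited as a subword of the reduced expression for $w$; hence $us\leq w$.

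Statements (1) and (2) then follow formally from (3). If $s\in D_R(u)\cap D_R(w)$, then $us<u\leq w$ (so $us\leq w$) and $s\notin D_R(us)$; applying (3) to the pair $us\leq w$ yields $us\leq ws$. Symmetrically, if $s\notin D_R(u)\cup D_R(w)$, then $u\leq w<ws$ (so $u\leq ws$) and $s\in D_R(ws)$; applying (3) to the pair $u\leq ws$ yields $us\leq ws$.

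The only genuinely delicate step is the claim, inside the proof of (3), that a subword of the chosen reduced expression for $w$ which spells a reduced expression for $u$ cannot use the final letter $s$. This is where the hypothesis $s\notin D_R(u)$ bites, via the standard equivalence ``$s\in D_R(z)$ if and only if some reduced expression for $z$ ends in $s$'' (immediate from the definition of $\ell$ by appending or deleting the last letter). Everything else is routine bookkeeping with subwords and length counts, so the whole proof fits comfortably within the framework set up by the Subword Property already stated above.
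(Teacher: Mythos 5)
Your proof is correct. Note that the paper does not prove Lemma~\ref{ll} at all: it is stated as a well-known result with citations to Deodhar, to Bj\"orner--Brenti, and to Humphreys, so there is no in-paper argument to compare against. Your argument is the standard one found in those references: the core case is the third statement, handled via the Subword Property applied to a reduced expression for $w$ ending in $s$, with the observation that a reduced subword spelling $u$ cannot use the final letter (else $s\in D_R(u)$); the first two statements then reduce to the third by replacing $u$ with $us$ (respectively $w$ with $ws$), and the left versions follow by applying the right versions to inverses, using that inversion preserves length and Bruhat order and exchanges $D_L$ and $D_R$. All steps check out, including the length count showing that appending $s$ to the subword yields a reduced expression for $us$ sitting inside the chosen reduced expression for $w$.
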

\begin{pro}
\label{fattorizzo}
Let $J \subseteq S$. 
\begin{enumerate}
\item[(i)] 
Every $w \in W$ has a unique factorization $w=w^{J} \cdot w_{J}$ 
with $w^{J} \in W^{J}$ and $w_{J} \in W_{J}$; for this factorization, $\ell(w)=\ell(w^{J})+\ell(w_{J})$.
\item[(ii)] Every $w \in W$ has a unique factorization $w=\, _{J} w\,  \cdot \,  ^{J}\! w$ 
with $_{J} w \in W_{J}$, $^{J} \! w \in \,  ^{J} W$; for this factorization, $\ell(w)=\ell(_{J} w )+\ell(^{J} \! w)$.
\end{enumerate}
\end{pro}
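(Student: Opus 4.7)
My plan is to prove (i) and then derive (ii) from it by an inversion argument, so the main work is to establish the factorization $w=w^{J}\cdot w_{J}$ together with length additivity and uniqueness. I would organize the proof as: (a) existence and length additivity by induction on $\ell(w)$; (b) a key lemma saying $\ell(x\cdot z)=\ell(x)+\ell(z)$ whenever $x\in W^{J}$ and $z\in W_{J}$; (c) uniqueness from (b); (d) (ii) via inversion.

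For (a), I would induct on $\ell(w)$. If $D_{R}(w)\cap J=\emptyset$, then $w\in W^{J}$ and the factorization is trivially $w\cdot e$. Otherwise, pick $s\in D_{R}(w)\cap J$; since $\ell(ws)<\ell(w)$, the inductive hypothesis applied to $ws$ gives $ws=u\cdot v$ with $u\in W^{J}$, $v\in W_{J}$ and $\ell(ws)=\ell(u)+\ell(v)$. Then $w=u\cdot(vs)$ with $vs\in W_{J}$. To verify length additivity, I combine $\ell(w)=\ell(ws)+1=\ell(u)+\ell(v)+1$ with the trivial inequality $\ell(u(vs))\le \ell(u)+\ell(v)+1$; equality forces $\ell(vs)=\ell(v)+1$ and $\ell(w)=\ell(u)+\ell(vs)$, completing the inductive step.

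For the key lemma in (b), I would induct on $\ell(z)$. Write $z=z'\cdot s$ reducedly with $s\in J$; by induction $\ell(xz')=\ell(x)+\ell(z')$. If $\ell(xz's)<\ell(xz')$, then $s\in D_{R}(xz')$, and by the Lifting Property (Lemma~\ref{ll}) applied to $x\le xz'$ and $s$, together with $s\notin D_{R}(x)$ (since $x\in W^{J}$ and $s\in J$), one obtains a shorter coset representative for $xW_{J}$, contradicting the reduced-expression analysis; hence $\ell(xz's)=\ell(xz')+1$, giving length additivity. Given (b), uniqueness in (i) is quick: if $w=x\cdot y=x'\cdot y'$ are two such factorizations, set $z=x^{-1}x'=y(y')^{-1}\in W_{J}$. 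Applying (b) to $x'=x\cdot z$ yields $\ell(x')=\ell(x)+\ell(z)$, and symmetrically to $x=x'\cdot z^{-1}$ yields $\ell(x)=\ell(x')+\ell(z)$; adding forces $\ell(z)=0$, so $z=e$ and $x=x'$, $y=y'$.

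For (ii), I would apply (i) to $w^{-1}=(w^{-1})^{J}\cdot(w^{-1})_{J}$ and take inverses. Using $\ell(g^{-1})=\ell(g)$, the identities $D_{L}(g)=D_{R}(g^{-1})$ and $(W_{J})^{-1}=W_{J}$, one sees that $((w^{-1})_{J})^{-1}\in W_{J}$ and $((w^{-1})^{J})^{-1}\in{}^{J}W$, with length additivity preserved. Uniqueness transports along the same inversion. I expect the main obstacle to be a careful bookkeeping proof of the key lemma in (b): verifying that no $s\in J$ can appear as a right descent of $xz'$ when $x\in W^{J}$ requires invoking the Lifting Property (or the Exchange Condition) in the right way, but the argument is standard and self-contained.
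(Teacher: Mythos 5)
First, a point of comparison: the paper does not prove Proposition~\ref{fattorizzo} at all — it is quoted as a well-known fact with references (Deodhar, Bj\"orner--Brenti, Humphreys, van den Hombergh) — so your argument has to stand entirely on its own. Within your plan, parts (a), (c) and (d) are sound: the induction giving existence together with $\ell(w)=\ell(w^J)+\ell(w_J)$ is correct, uniqueness does follow formally from your key lemma (b) by the doubling-of-lengths trick, and deducing (ii) from (i) by applying (i) to $w^{-1}$ and inverting is routine.

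The genuine gap is in (b), which is the only nontrivial point. In the critical case $s\in D_R(xz')$, the Lifting Property (Lemma~\ref{ll}) applied to $x\le xz'$ with $s\notin D_R(x)$ yields only $xs\le xz'$ and $x\le xz's$; neither statement is contradictory, and no element of $xW_J$ shorter than $x$ is produced — the only shortened element in sight is $xz's$, whose length is $\ell(x)+\ell(z')-1\ge\ell(x)$. So the step ``one obtains a shorter coset representative for $xW_J$, contradicting the reduced-expression analysis'' does not correspond to any actual deduction. The standard proof of (b) instead applies the Exchange Condition to the concatenated reduced word (a reduced word for $x$ followed by one for $z'$): if the deleted letter falls in the $z'$-part one contradicts $\ell(z's)=\ell(z')+1$, but if it falls in the $x$-part one only obtains some $\tilde{x}\in xW_J$ with $\ell(\tilde{x})<\ell(x)$, and with the paper's descent-set definition of $W^J$ this is not yet absurd. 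To close that case one must either develop the theory for minimal-length coset representatives first and only afterwards identify them with the descent-condition elements, or add an outer induction on $\ell(x)$: factor $\tilde{x}$ using your part (a), apply the inductive instance of (b) to that factorization, and thereby exhibit an element of $J\cap D_R(x)$, contradicting $x\in W^J$. As written, your proof of (b) — and hence of uniqueness in (i) and of all of (ii) — is incomplete, and the Lifting Property alone will not repair it.
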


\begin{pro}
\label{unicomax}
Let \( J\subseteq S \) and $w \in W$.  The set 
$ W_{J}\cap [e,w] $ has a unique  maximal element  \( w_0(J) \),
so that \( W_{J}\cap [e,w]$ is the interval $[e,w_0(J)] \). 
\end{pro}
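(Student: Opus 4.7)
The plan is to induct on $\ell(w)$, using the Lifting Property (Lemma~\ref{ll}) to track how the maximum of $W_J \cap [e,\cdot]$ transforms as one descends. The base $\ell(w)=0$ is trivial. For the inductive step, pick $s \in D_R(w)$ and set $w' = ws$; by the inductive hypothesis, $W_J \cap [e, w']$ has a unique maximum $v'$ and coincides with the interval $[e, v']$.

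The argument splits according to whether $s \in J$. If $s \notin J$, then every $u \in W_J$ has $D_R(u) \subseteq J$ (a standard consequence of the Strong Exchange Property), so $s \notin D_R(u)$, and the third clause of the Lifting Property applied to $u \leq w$ gives $u \leq ws = w'$. Hence $W_J \cap [e,w] = W_J \cap [e, w']$ and the maximum is still $v'$. If $s \in J$, take any $u \in W_J \cap [e,w]$: either $u \leq w'$, so $u \leq v'$ by induction, or $u \not\leq w'$, in which case the Lifting Property forces $s \in D_R(u)$ together with $us \leq w'$; since $s \in J$ we have $us \in W_J$, so $us \leq v'$ by induction. Applying the Lifting Property to $us \leq v'$ with respect to $s$ (noting $s \notin D_R(us)$) now produces two subcases. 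If $s \in D_R(v')$, we obtain $u = us \cdot s \leq v'$ at once, so $v'$ remains the maximum. If $s \notin D_R(v')$, we get $u \leq v' s$, and a second application of the Lifting Property to $v' \leq w'$ with $s \notin D_R(v')$ and $s \notin D_R(w')$ yields $v' s \leq w' s = w$, so $v' s \in W_J \cap [e,w]$ dominates every such $u$ and becomes the new maximum.

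The remaining claim that $W_J \cap [e,w]$ equals the full interval $[e, w_0(J)]$ follows from the Subword Property (Theorem~\ref{subword}): any $u \leq w_0(J) \in W_J$ has a reduced expression obtained as a subword of a reduced expression of $w_0(J)$, hence using only letters from $J$, so $u \in W_J$; the reverse inclusion is exactly what the induction provides. The one delicate step is the final subcase, where the maximum strictly grows from $v'$ to $v' s$: one must simultaneously verify $v' s \leq w$ and that $v' s$ dominates every element of $W_J \cap [e,w]$, which requires two separate applications of the Lifting Property. Securing both facts at once is really the only place where the bookkeeping could go wrong.
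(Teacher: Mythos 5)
Your proof is correct. Note that the paper itself does not prove Proposition~\ref{unicomax}: it is stated as a well-known result (due essentially to van den Hombergh, \cite{Hom74}; see also \cite{BB}, \cite{Hum}), so there is no internal proof to compare against. Your argument — induction on $\ell(w)$, splitting on whether the chosen $s\in D_R(w)$ lies in $J$, and using the Lifting Property to show that the maximum of $W_J\cap[e,ws]$ either persists or grows to $v's$ — is precisely the classical line of reasoning behind the cited lemma, and all the applications of Lemma~\ref{ll} check out: in the case $s\in J$, $u\not\leq ws$ forces $s\in D_R(u)$ and $us\leq ws$, hence $us\leq v'$; if $s\in D_R(v')$ lifting gives $u\leq v'$, while if $s\notin D_R(v')$ lifting gives $u\leq v's$ and a second application to $v'\leq ws$ gives $v's\leq w$. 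The only point stated a bit tersely is that, in the last subcase, the elements $u\leq ws$ are also dominated by the new maximum; this is immediate from $u\leq v'\leq v's$, so no gap results. The final identification of $W_J\cap[e,w]$ with the full interval $[e,w_0(J)]$ via the Subword Property is also the standard argument and is correct.
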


Note that, by the uniqueness of the factorizations of Proposition~\ref{fattorizzo}, if 
 $J\subseteq S$ and $w\in W$, 
 then
\begin{eqnarray}
\label{0}
l \in D_L(\,_Jw) & \iff &l \in D_L(w) \cap J.
\end{eqnarray}

Furthermore, it is well known (and immediate to prove) that
$ v\leq w$  implies  both $v^J\leq w^J$  and  ${^J\! v}\leq \,{^J\!w}$.

\subsection{Special matchings}
Let $P$ be a partially ordered set. An element $y\in P$ {\em covers} $x\in P$  if the interval $[x,y]$ coincides with $\{x,y\}$; in this case, we write $x \lhd y$ as well as $y \rhd x$.
The poset $P$ is {\em graded} if $P$ has a minimum and there is a function
$\rho : P \rightarrow {\mathbb N}$ (the {\em rank function}
of $P$) such that $\rho (\hat{0})=0$ and $\rho (y) =\rho (x)
+1$ for all $x,y \in P$ with $x \lhd y$. 
(This definition is slightly different from the one given in \cite{StaEC1}, but is
more convenient for our purposes.) 
The {\em Hasse diagram} of $P$ is the graph having $P$ as vertex set and $ \{ \{ x,y \} \in \binom {P}{2} : \text{ either $x \lhd y$
 or $y \lhd x$} \}$ as edge set.

A {\em matching} of a poset \( P \) is an involution
\( M:P\rightarrow P \) such that \( \{v,M(v)\}\) is an edge in the Hasse diagram of $P$, for all \( v\in V \).
A matching \( M \) of \( P \) is {\em special} if\[
u\lhd v\Longrightarrow M(u)\leq M(v),\]
 for all \( u,v\in P \) such that \( M(u)\neq v \).

\bigskip

Now, let $(W,S)$ be a Coxeter system and recall that  $W$ is a graded partially ordered set (under Bruhat order) having $\ell$ as its rank function.
Given $w\in W$, we say that $M$ is a matching of $w$ if $M$ is a matching of the lower Bruhat interval $[e,w]$.
For \( s\in D_{R}(w) \), we have  a matching 
\(\rho_{s}  \) of $w$  
defined by   \( \rho_{s}(u)=us \), for all $u \in [e,w]$. Symmetrically,  for \( s\in D_{L}(w)$, we have  a matching 
$\lambda_{s}$ of $w$ 
defined by    $\lambda_{s}(u)=su$, for all $u \in [e,w]$. By
the Lifting Property (Lemma~\ref{ll}),  such $\rho_s$ and $\lambda_s$ are special matchings of $w$. 
We call these matchings, respectively,  \emph{right} and \emph{left multiplication
matchings}.

The following two results are used  several times in what follows: the first directly follows from   \cite[Lemma~4.3]{BCM1}, the second  is  \cite[Proposition~5.3]{BCM1}.
We call an interval $[u, v]$ in a poset $P$ {\em dihedral} if it is isomorphic to an interval in a  Coxeter system of rank 2 ordered by Bruhat order. Moreover, given two matchings $M$ and $N$, we say that $M$ and $N$ commute on $X$ if the two compositions $M \circ N (x)$ and $N \circ M (x)$ are defined and equal, for all $x\in X$. We say that two matchings of $w$ commute if they commute everywhere on $[e,w]$.
\begin{lem}
\label{commutano}
Let $w\in W$.  Two special matchings $M$ and $N$ of  $w$ commute if and only if they commute on the lower dihedral intervals  of $[e,w]$ containing $M(e)$ and $N(e)$.
\end{lem}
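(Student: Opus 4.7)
The forward implication is immediate from the definition, so my attention focuses on the converse: assuming $M$ and $N$ commute on every lower dihedral interval of $[e,w]$ that contains $s := M(e)$ and $t := N(e)$, I must show that $M\circ N(u) = N \circ M(u)$ for every $u \in [e,w]$. My plan is to argue by contradiction, picking a Bruhat-minimal element $u \in [e,w]$ where the two compositions disagree, and then using the special matching property to force the obstruction down into a dihedral interval, where the hypothesis delivers the contradiction.

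The first step is to set up the minimal counterexample. Let $u \in [e,w]$ be minimal with $MN(u) \neq NM(u)$; so by minimality, $M$ and $N$ commute on $[e,u) \cap [e,w]$. I then split into four cases according to whether $M(u) \lhd u$ or $M(u) \rhd u$ and analogously for $N(u)$. The cases in which at least one of $M(u), N(u)$ lies strictly below $u$ can be handled rapidly: writing $v = M(u) \lhd u$, the commutativity of $M$ and $N$ at $v$ (known by minimality) together with the special-matching inequality $N(v) \leq N(u)$ (valid when $N(v) \neq u$) pins down $NM(u)$ and $MN(u)$ via the involutions, forcing them to coincide.

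The hard case is when $M(u) \rhd u$ and $N(u) \rhd u$; here both $M$ and $N$ move $u$ upward, and no smaller element is available to drive the induction. The idea is to iterate: set $u_0 = u$ and consider the sequence obtained by alternately applying $M$ and $N$, which, because $M, N$ are involutions, produces a chain (or a cycle) in $[e,w]$. Using the special matching property at each covering relation along this chain, together with the hypothesis that the restrictions of $M$ and $N$ to lower dihedral intervals already commute, I aim to show that the relevant orbit lives inside a dihedral interval rooted at $e$ that contains $s$ and $t$. Concretely, I would show that the minimum of this orbit is $e$ itself, which places $u$ inside $W_{\{s,t\}}\cap [e,w]$ — and on this set the hypothesis directly provides $MN(u) = NM(u)$, contradicting the choice of $u$.

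The main obstacle I anticipate is precisely this last reduction: showing that a minimal failure of commutativity must be trapped inside a standard dihedral interval. This requires exploiting the special matching condition in both directions (from below and from above) and carefully verifying that the orbit of $u$ under $\langle M, N\rangle$ does not escape the dihedral frame. A clean way to organize this is to invoke the structural description of special matchings from \cite{BCM1}, according to which a special matching of an arbitrary graded poset is essentially determined, level by level, by its behavior on covers of the bottom element; this should let me identify the orbit of $u$ under $\langle M, N\rangle$ with an interval in a dihedral Coxeter system and finish the argument.
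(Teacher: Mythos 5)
The paper does not actually prove Lemma~\ref{commutano}: it imports it from \cite[Lemma~4.3]{BCM1} (together with \cite[Proposition~5.3]{BCM1}, quoted here as Lemma~\ref{5.3}), so the comparison is with that source rather than with an argument in this paper. Your reduction is fine up to a point: if $M(u)\lhd u$, then commutation at $M(u)$ gives $M(N(M(u)))=N(u)$, hence $N(M(u))=M(N(u))$, so commutation at $u$ is equivalent to commutation at $M(u)$ (and likewise at $N(u)$); in particular commutation is constant along $\langle M,N\rangle$-orbits, and the only genuinely hard case is a minimal counterexample $u$ with $M(u)\rhd u$ and $N(u)\rhd u$, i.e.\ $u$ the minimum of a ``bad'' orbit. (In the easy cases you do not even need the special-matching inequality $N(v)\leq N(u)$.)

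The gap is exactly at the step you flag as the main obstacle, and the mechanism you propose for it cannot work. In the hard case $u$ is already the minimum of its own $\langle M,N\rangle$-orbit, so ``show that the minimum of this orbit is $e$'' is literally the assertion that $u\in W_{\{s,t\}}\cap[e,w]=[e,w_0(s,t)]$ --- which is the whole content of the lemma, and nothing in the special-matching axioms forces a bad orbit down to the identity. Orbits through a general element sit far above $e$: for instance, for $M=\lambda_s$, $N=\lambda_t$ the orbit of $u$ is essentially $W_{\{s,t\}}u\cap[e,w]$, with minimum $u$, not $e$; so the hypothesis, which only concerns lower dihedral intervals containing $M(e)$ and $N(e)$, cannot be applied to such orbits directly. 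The true reason the lemma holds is a structural statement of a different kind: the action of a special matching on an arbitrary (coset-type) dihedral orbit is governed by its restriction to the lower dihedral intervals through $M(e)$ --- on each such orbit $M$ acts either as a multiplication matching or as a transported copy of its restriction $M_{st}$ to $[e,w_0(s,t)]$ (this is what \cite[Lemma~4.3]{BCM1} encapsulates, and what the systems characterization recalled as Theorem~\ref{caratteri}, from \cite{Mchara}, makes explicit). It is this propagation from the bottom dihedral interval upward, not a descent of the minimal counterexample to $e$, that closes the argument; your appeal to a ``level by level'' description of special matchings by their behavior on covers of $\hat 0$ is not a result available in \cite{BCM1} in a usable form, so as written the decisive step remains unproved.
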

\begin{lem}
\label{5.3}
Let $J\subseteq S$, $w\in W$, and $M$ be a special matching of $w$. If $M(e)\in J$, then $M$  stabilizes  $[e,w_0(J)]$.
\end{lem}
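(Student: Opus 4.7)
My plan is induction on $\ell(u)$, showing that $M(u) \in [e, w_0(J)]$ for every $u \in [e, w_0(J)]$. The base case $u = e$ is immediate: the matching axiom forces $M(e)$ to be a simple reflection (it covers $e$ in the Hasse diagram), by hypothesis $M(e) \in J \subseteq W_J$ and $M(e) \leq w$, so $M(e) \in W_J \cap [e, w] = [e, w_0(J)]$ by Proposition~\ref{unicomax}.

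For the inductive step, fix $u \in [e, w_0(J)]$ with $\ell(u) \geq 1$ and assume the claim for all shorter elements of $[e, w_0(J)]$. The easy case is $M(u) \lhd u$: then $M(u) < u \leq w_0(J)$, and we are done. The substantive case is $M(u) \rhd u$, where I must show $M(u) \in W_J$. Since $u \in W_J \setminus \{e\}$, pick $r \in J \cap D_R(u)$ and set $v := ur$; then $v \lhd u$ and, by the Subword Property, $[e,u] \subseteq W_J$, so $v \in [e, w_0(J)]$. The inductive hypothesis yields $M(v) \in W_J$. Moreover $M(u) \rhd u$ rules out $M(v) = u$ (otherwise $M(u) = v \lhd u$), so the special matching property applied to the cover $v \lhd u$ gives $M(v) \leq M(u)$.

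The remaining step—and the main obstacle—is to upgrade $M(v) \leq M(u)$ with $M(v) \in W_J$ to the stronger conclusion $M(u) \in W_J$. My plan is to leverage the principle underlying Lemma~\ref{commutano}: a special matching of $w$ is determined by its restriction to the lower dihedral intervals of $[e,w]$ containing $M(e)$. In particular, since both $s := M(e)$ and $r$ lie in $J$, $M$ is forced on the lower dihedral interval $[e, w_0(s,r)]$ to act as one of the multiplication matchings $\lambda_s$ or $\rho_s$. Transferring this local description to the rank-$2$ Bruhat interval $[v, M(u)]$—which is a diamond whose atoms are $u$ and (in the sub-case $M(v) \rhd v$) also $M(v) \in W_J$—and splitting into the sub-cases $M(v) \lhd v$ vs.\ $M(v) \rhd v$ should pin down the reflection $u^{-1}M(u)$ as lying in the reflection subgroup of $W_J$, giving $M(u) \in W_J$ and closing the induction.

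I expect the technical heart of the argument to be this last step: converting the local behavior of $M$ on the dihedral interval generated by $s, r \in J$ into the global constraint $M(u) \in W_J$. Depending on how intricate the diamond analysis becomes, a secondary induction on $\ell(w) - \ell(u)$ or on $|D_R(u) \cap J|$ may be needed, and one may have to invoke the three defining properties of a special matching more than once to exclude configurations where the diamond $[v, M(u)]$ escapes $W_J$.
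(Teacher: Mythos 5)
There is a genuine gap, and it sits exactly where you place the ``technical heart''. (Note also that the paper itself gives no proof of this lemma: it is quoted from \cite[Proposition~5.3]{BCM1}, so your argument has to stand on its own.) Your induction set-up, base case, and reduction to showing $M(u)\in W_J$ when $M(u)\rhd u$ are fine, as is the step producing $M(v)\le M(u)$ with $M(v)\in W_J$ for $v=ur$. But the upgrade to $M(u)\in W_J$ is never carried out, and the route you sketch does not work. The claim that $M$ is ``forced on $[e,w_0(s,r)]$ to act as $\lambda_s$ or $\rho_s$'' is false in general: when $w_0(s,r)$ is not the longest element of $W_{\{s,r\}}$, a lower dihedral interval admits special matchings that are not multiplication matchings (for $m_{s,t}\ge 5$, the matching of $[e,stst]$ given by $e\leftrightarrow s$, $t\leftrightarrow ts$, $st\leftrightarrow sts$, $tst\leftrightarrow stst$ is special but is neither $\lambda_s$ nor $\rho_s$); this is precisely why the paper's right and left systems carry an arbitrary special matching $M_{st}$ of $w_0(s,t)$ rather than a multiplication matching. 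Moreover, Lemma~\ref{commutano} concerns commutation of two matchings, not a mechanism for transporting the values of $M$ on $[e,w_0(s,r)]$ to the interval $[v,M(u)]$ for $u$ outside $W_{\{s,r\}}$; and even asserting that $M$ restricts to $[e,w_0(s,r)]$ is the case $J=\{s,r\}$ of the very lemma you are proving. Finally, in the sub-case $M(v)\lhd v$ the interval $[v,M(u)]$ contains $u$ and a second middle element about which your argument says nothing at all.

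What actually closes the induction is more elementary and uses two ingredients absent from your sketch. First, from Proposition~\ref{unicomax} applied to $z=M(u)$: if $z\notin W_J$, then $z$ has at most one coatom in $W_J$, since every element of $W_J\cap[e,z]$ lies below the maximum of $W_J\cap[e,z]$, whose length is at most $\ell(z)-1$. Hence it suffices to exhibit two distinct coatoms of $M(u)$ lying in $W_J$. If some coatom $y\lhd u$ has $M(y)\rhd y$, you are done: $M(u)\rhd u$ forces $M(y)\neq u$, so the special matching condition gives $M(y)\lhd M(u)$, and $u$, $M(y)$ are two distinct coatoms of $M(u)$ in $W_J$ (using the inductive hypothesis for $y$). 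Second, if your chosen coatom $v$ has $M(v)\lhd v$ (so $\ell(u)\ge 2$), look \emph{below} $u$: the length-two interval $[M(v),u]$ is a diamond $\{M(v),v,v',u\}$ with $v'\neq v$, and applying the special matching condition to the cover $M(v)\lhd v'$ (legitimate because $M(M(v))=v\neq v'$) yields $v\le M(v')$, whence $M(v')\rhd v'$, and you are back in the previous case with $y=v'$. Without the at-most-one-coatom consequence of Proposition~\ref{unicomax} and this diamond-below trick (or equivalent substitutes), your plan does not pin down $u^{-1}M(u)$ and the inductive step does not close.
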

 In particular,  given two special matchings $M$ and $N$ of $w$ such that  $M(e)\neq N(e)$, we have that $M$ and $N$ commute if and only if they commute on the unique lower dihedral interval $[e, w_0(M(e),N(e))]$, and this lower dihedral interval is stabilized by both $M$ and $N$. 

\bigskip

The following definitions are taken from \cite{Mtrans}.
\begin{defi}
A  right system for $w\in W$ is a quadruple $\mathcal R=(J,s,t,M_{st})$ such that:
\begin{enumerate}

\item[R1.] $J\subseteq S$, $s\in J$, $t\in S\setminus J$, and $M_{st}$ is a special matching of $w_0(s,t)$ such that  $M_{st}(e)=s$ and  $M_{st}(t)=ts$;

\item[R2.] $(u^{J})^{\{s,t\}}\, \cdot \, M_{st} \Big ((u^{J})_{\{s,t\}} \, \cdot \, _{\{s\}} (u_{J})\Big )\, \cdot \,
 ^{\{s\}}(u_{J}) \leq w$,  for all $u\leq w$;

 \item[R3.] 
if $r\in J$ and $r \leq w^J$, then $r$ and $s$ commute;

\item[R4.]
\label{ddddxxxx}
\begin{enumerate}
\item  if $s\leq (w^J)^{\{s,t\}}$ and  $t\leq (w^J)^{\{s,t\}}$, then $M_{st}= \rho_s$,
\item   if $s\leq (w^J)^{\{s,t\}}$ and  $t\not \leq (w^J)^{\{s,t\}}$, then $M_{st}$ commutes with $\lambda_s$,
\item   if $s\not\leq (w^J)^{\{s,t\}}$ and  $t \leq (w^J)^{\{s,t\}}$, then $M_{st}$ commutes with $\lambda_t$;
\end{enumerate}

\item[R5.] if $s\leq \, ^{\{s\}} (w_{J})$, then $M_{st}$ commutes with $\rho_s$ on $[e,w_0(s,t)]$.

\end{enumerate} 

\end{defi}

\begin{defi}
A left system for $w\in W$ is a quadruple $\mathcal L= (J,s,t,M_{st})$ such that:

\begin{enumerate}
\item[L1.] $J\subseteq S$, $s\in J$, $t\in S\setminus J$, and $M_{st}$  is a special matching of $w_0(s,t)$ such that $M_{st}(e)=s$ and  $M_{st}(t)=st$;

\item[L2.] $    (_Ju)^{\{s\}} \, \cdot \, M_{st} \Big( \, (_Ju)_{\{s\}}  \,  \cdot  \,  _{\{s,t\}} (^J  u) \Big) \, 
\cdot \, ^{\{s,t\}}(^J u)    \leq w$,  for all $u\leq w$;
 \item[L3.] 
if $r\in J$ and $r \leq \, ^Jw$, then $r$ and $s$ commute;

\item[L4.]
\label{puresx}
\begin{enumerate}
\item  if $s\leq \, ^{\{s,t\}}(^Jw)$ and  $t\leq \, ^{\{s,t\}}(^Jw)$, then $M_{st}= \lambda_s$,
\item   if $s\leq \, ^{\{s,t\}}(^Jw)$ and  $t\not \leq \, ^{\{s,t\}}(^Jw)$, then $M_{st}$ commutes with $\rho_s$,
\item   if $s\not\leq \, ^{\{s,t\}}(^Jw)$ and  $t \leq \, ^{\{s,t\}}(^Jw)$, then $M_{st}$ commutes with $\rho_t$;
\end{enumerate}

\item[L5.] if $s\leq \,  (_{J}w)^{\{s\}}$, then $M_{st}$ commutes with $\lambda_s$ on $[e,w_0(s,t)]$.

\end{enumerate} 
\end{defi}
(As shown in \cite[Lemma~4.3]{Mchara}, Properties R5 and L5 are equivalent to the, a priori, more restrictive Properties R5 and L5 appearing in \cite{Mtrans}.)

Given a right system $\mathcal R=(J,s,t,M_{st})$ for $w$, the \emph{matching associated} with it is the map  $M_{\mathcal R}$ sending  $u\in [e,w]$ to 
 $$M_\mathcal R (u) = (u^{J})^{\{s,t\}}\, \cdot \, M_{st} \Big( (u^{J})_{\{s,t\}} \,  \cdot  \,  _{\{s\}} (u_{J}) \Big) \, 
\cdot \, ^{\{s\}}(u_{J}).$$
Symmetrically, the \emph{matching associated} with a left system $\mathcal L$ for $w$ is the map $_\mathcal L M$ sending 
$u\in [e, w]$ to 
\[
 _{\mathcal L}M(u)=
 (_Ju)^{\{s\}} \, \cdot \, M_{st} \Big( \, (_Ju)_{\{s\}}  \,  \cdot  \,  _{\{s,t\}} (^J  u) \Big) \, 
\cdot \, ^{\{s,t\}}(^J u),
\]
i.e., $_{\mathcal L}M(u)=\big(M_\mathcal L(u^{-1})\big)^{-1}$, where $M_\mathcal L$ is the map on $[e,w^{-1}]$ associated to $\mathcal L$ as a right system for $w^{-1}$.

The fact that $M_\mathcal R$ and $_{\mathcal L}M$ are actually  matchings of $w$ and the fact that the lengths add in these products  are shown in \cite{Mchara} (respectively, in Corollary~4.10 and Proposition~4.9). 

Note that $M_{\mathcal R}$ acts as $\lambda_s$ on $[e, w_0(s,r)]$ for all $r\in J$, and as $\rho_s$ on $[e, w_0(s,r)]$ for all $r\in S\setminus (J\cup \{t\})$; symmetrically, $_{\mathcal L}M$ acts as $\rho_s$ on $[e, w_0(s,r)]$ for all $r\in J$, and as $\lambda_s$ on $[e, w_0(s,r)]$ for all $r\in S\setminus (J\cup \{t\})$.

We comment  that, if $s\in D_R(w)$, $t\in S\setminus\{s\}$,  $J= \{s\}$ and $M_{st}= \rho_s$, then we obtain a right system whose associated  matching is the right multiplication matching $\rho_s$ ($M = \rho_s$ on the entire interval $[e,w]$). Symmetrically,  we obtain left multiplication matchings as special cases of matchings associated with  left systems.
On the other hands, we may obtain matchings that are not multiplication matchings. For example,  let $W$ be the Coxeter group of type $A_3$ with Coxeter generators $s_1$, $s_2$ and $s_3$ numbered as usual (i.e. $m_{s_1,s_2}=m_{s_2,s_3}=3$ and $m_{s_1,s_3}=2$), and let $w=s_1s_2s_3s_1 \in W$. The quadruple  $\mathcal R=(\{s_2,s_3\},s_2,s_1,M)$, with $M(e)=s_2$, $M(s_1)=s_1s_2$, and $M(s_2s_1)=s_1s_2s_1$, is a right system for $w$ whose associated matching is not a multiplication matching (the reader may check that the resulting matching is the dashed special matchings in the first picture of Figure~\ref{accoppiamento}).

The main result of  \cite{Mchara} is that the matchings arising from systems of $w$ are exactly the special matchings of $w$. We only need  one side of this characterization  (see \cite[Theorem~4.12]{Mchara}).
\begin{thm}
\label{caratteri}
Every  special matching of $w\in W$  is  associated with a right or a left system of $w$.  
\end{thm}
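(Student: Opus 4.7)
The plan is to reconstruct the data of a right or left system directly from the combinatorics of $M$. First I would set $s := M(e)$; since $e \lhd M(e)$ is a cover in Bruhat order, $s$ must lie in $S$. The whole argument is then driven by the restrictions of $M$ to the lower dihedral intervals containing $s$.

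For each $r \in S$ with $r \leq w$ and $r \neq s$, Lemma~\ref{5.3} ensures that $M$ stabilizes $[e, w_0(s,r)]$, so the restriction is a special matching of a rank-$2$ interval sending $e$ to $s$. I would classify each such restriction into three families: it equals $\rho_s$, it equals $\lambda_s$, or it is a non-multiplication special matching of the dihedral interval (which can only occur when $m_{s,r}$ is even). The structural claim at the heart of the argument is that at most one $r$ can fall into the third family; the plan is to pit two would-be exceptional generators $r_1,r_2$ against each other using Lemma~\ref{commutano} together with the specialty of $M$ applied to cover relations crossing the two dihedral intervals, to derive a contradiction. This uniqueness step is expected to be the main obstacle.

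Call this exceptional generator $t$ (if no generator is exceptional, pick any $t \in S \setminus \{s\}$ on which $M$ acts as a multiplication). Then set $M_{st} := M|_{[e,w_0(s,t)]}$ and
\[
J := \{s\} \cup \bigl\{ r \in S \setminus \{s,t\} : r \leq w,\ M|_{[e,w_0(s,r)]} = \lambda_s \bigr\}.
\]
Whether the resulting quadruple $(J,s,t,M_{st})$ yields a right or a left system is dictated by whether $M_{st}(t)$ equals $ts$ or $st$. Axioms R1/L1, R4/L4, and R5/L5 are then built into the construction, while R3/L3 follows from a short commutation argument inside the dihedral intervals $[e,w_0(s,r)]$ for $r \in J$.

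The hard part will be verifying R2 (or L2), namely that
\[
(u^{J})^{\{s,t\}} \cdot M_{st}\bigl((u^{J})_{\{s,t\}} \cdot {}_{\{s\}}(u_{J})\bigr) \cdot {}^{\{s\}}(u_{J}) \leq w
\]
for every $u \leq w$. I would attack this by induction on $\ell(u)$, propagating the inequality through the tripartite factorization one cover at a time using the Lifting Property (Lemma~\ref{ll}) and the Subword Property (Theorem~\ref{subword}), and invoking the specialty of $M$ to translate between $M(u')$ and $M(u)$ along a cover $u' \lhd u$. Finally, to conclude $M_\mathcal{R} = M$ (or ${}_\mathcal{L}M = M$), it would suffice to show that any special matching of $w$ is determined by its restrictions to the dihedral intervals $[e, w_0(M(e),r)]$ for $r \in S$ with $r \leq w$; this should follow from an inductive chain argument propagating the equality along covers by repeated use of the specialty condition.
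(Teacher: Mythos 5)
First, a point of orientation: the paper does not prove Theorem~\ref{caratteri} at all; it imports it from \cite{Mchara} (Theorem~4.12 there), where the proof is the main content of that article. So your blind reconstruction has to stand on its own, and as written it is an outline rather than a proof: every step that actually carries weight is deferred. The uniqueness of the ``exceptional'' generator is announced as ``the main obstacle'' and not argued; Property R2/L2 --- which is essentially the statement that the reconstructed formula maps $[e,w]$ into $[e,w]$ with lengths adding, and whose verification is where almost all of the work in \cite{Mchara} lies --- is only met with a declared intention to induct on $\ell(u)$; and the final identification $M_{\mathcal R}=M$ (or $_{\mathcal L}M=M$) is waved through. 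Moreover, R3, R4 and R5 are not ``built into the construction'': they compare the dihedral restrictions of $M$ with global factorization data of $w$ (the elements $(w^J)^{\{s,t\}}$, $^{\{s\}}(w_J)$, $^Jw$, and so on), so they are genuine constraints on which generators may be put into $J$ and on the right-versus-left choice, and they have to be checked using the specialty of $M$ on all of $[e,w]$, not just on rank-two intervals.

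There are also concrete inaccuracies in the parts you do commit to. The claim that a non-multiplication special matching of a lower dihedral interval can occur only when $m_{s,r}$ is even is false: if $w_0(s,r)=srsr$ with $m_{s,r}\geq 5$ (any parity), the involution $e\leftrightarrow s$, $r\leftrightarrow rs$, $sr\leftrightarrow rsr$, $srs\leftrightarrow srsr$ is a special matching of $[e,w_0(s,r)]$ that is neither $\rho_s$ nor $\lambda_s$. Also, your definition of $J$ (the generators on which $M$ restricts to $\lambda_s$) is tied to the right-system convention, since $M_{\mathcal R}$ acts as $\lambda_s$ on $[e,w_0(s,r)]$ exactly for $r\in J$; if $M_{st}(t)=st$ forces a left system, then $J$ must instead collect the generators on which $M$ restricts to $\rho_s$, and the generators for which $\rho_s$ and $\lambda_s$ coincide on $[e,w_0(s,r)]$ (for instance when $s$ and $r$ commute) can a priori be placed on either side --- sorting them consistently with R2--R5 is precisely the delicate point. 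In short, your plan is in the spirit of the argument in \cite{Mchara}, but the statement cannot be regarded as proved: either cite it, or actually carry out the uniqueness argument, the verification of R2--R5 for the reconstructed quadruple, and the identification of the associated matching with $M$.
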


We refer the reader to \cite{Mchara2} for a more compact  characterization in terms of only one self-dual type of systems. 

\subsection{Kazhdan--Lusztig polynomials}
Given a Coxeter system $(W,S)$ and $H \subseteq S$,  the Bruhat order induces an ordering on the set of minimal coset representatives  $W^{H}$ and the parabolic  intervals $[u,v]^{H}$, for all $u,v \in W^{H}$. 

We introduce the  parabolic Kazhdan--Lusztig $R$-polynomials and the parabolic Kazhdan--Lusztig polynomials through the following theorems-definitions, which are due to Deodhar (see \cite[\S \S 2-3]{Deo87} for their proofs).
\begin{thm}
\label{7.1}
Let $(W,S)$ be a Coxeter system, and $H \subseteq S$.
 For each $x \in \{ -1,q \}$,
  there is a unique family of polynomials $\{ R_{u,v}^{H,x}(q)
\} _{u,v \in W^{H}} \subseteq {\bf Z}[q]$ such that, for all $u,v \in W^{H}$:
\begin{enumerate}
\item $R^{H,x}_{u,v}(q)=0$ if $u \not \leq v$;
\item $R^{H,x}_{u,u}(q)=1$;
\item if $u<v$ and $s \in D_{L}(v)$, then
\[ R_{u,v}^{H,x} (q)= \left\{ \begin{array}{ll}
R_{su,sv}^{H,x}(q), & \mbox{if $s \in D_{L}(u)$,} \\
(q-1)R_{u,sv}^{H,x}(q)+qR_{su,sv}^{H,x}(q), & \mbox{if $s \notin D_{L}(u)$
and $su \in W^{H}$,} \\
(q-1-x)R_{u,sv}^{H,x}(q), & \mbox{if $s \notin D_{L}(u)$ and $su \notin
W^{H}$.}
\end{array} \right. \]
\end{enumerate}
\end{thm}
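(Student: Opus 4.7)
The plan is to prove uniqueness by induction on $\ell(v)$, and to establish existence by identifying the $R^{H,x}_{u,v}(q)$ as coefficients of the bar involution in Deodhar's parabolic Hecke-algebra module.

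\emph{Uniqueness.} I proceed by strong induction on $\ell(v)$. Condition (2) pins down $R^{H,x}_{e,e}(q)=1$, and (1) handles all pairs with $u\not\le v$. For $\ell(v)\ge 1$, pick any $s\in D_L(v)$ (which exists since $v\ne e$); each branch of (3) expresses $R^{H,x}_{u,v}(q)$ as an explicit $\mathbb Z[q]$-combination of $R^{H,x}_{u,sv}(q)$ and $R^{H,x}_{su,sv}(q)$, both indexed by the shorter element $sv$. By the inductive hypothesis these are already determined, so $R^{H,x}_{u,v}(q)$ is forced, proving that at most one family can satisfy (1)--(3).

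\emph{Existence.} Let $\mathcal{H}$ denote the Hecke algebra of $(W,S)$ with generators $T_s$ satisfying $T_s^2=(q-1)T_s+q$ and the braid relations. For $x\in\{-1,q\}$, construct the left $\mathcal{H}$-module $\mathcal{M}^{H,x}$ with basis $\{N_w\}_{w\in W^H}$ and action
\[
T_s\cdot N_w=\begin{cases} N_{sw}, & sw\in W^H,\ \ell(sw)>\ell(w),\\ q\,N_{sw}+(q-1)\,N_w, & sw\in W^H,\ \ell(sw)<\ell(w),\\ x\,N_w, & sw\notin W^H.\end{cases}
\]
Introduce on $\mathcal{M}^{H,x}$ the semilinear bar involution characterized by $\overline{N_e}=N_e$, $\overline{q}=q^{-1}$, and $\overline{T_s\cdot m}=T_s^{-1}\cdot\overline{m}$, and define $R^{H,x}_{u,v}(q)$ via the expansion
\[
\overline{N_v}=\sum_{u\in W^H,\,u\le v}(-1)^{\ell(v)-\ell(u)}q^{-\ell(v)}\,R^{H,x}_{u,v}(q)\,N_u.
\]
A short induction on $\ell(v)$ shows these are polynomials in $\mathbb Z[q]$ supported on $u\le v$, yielding (1) and (2). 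To derive (3), pick $s\in D_L(v)$, apply the bar involution to both sides of $N_v=T_s\cdot N_{sv}$, and equate coefficients in the basis $\{N_u\}$: the three cases of the module action reproduce precisely the three branches of the recursion, and the resulting identity is automatically independent of the choice of $s\in D_L(v)$.

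\emph{Main obstacle.} The nontrivial step is verifying that $\mathcal{M}^{H,x}$ is a genuine $\mathcal{H}$-module, i.e.\ that the prescribed $T_s$-action respects the braid relations. This reduces to a rank-two calculation inside each standard parabolic subgroup $W_{\{s,t\}}$, and the delicate case arises when $s$ and $t$ behave differently with respect to $H$ (one keeps $w$ inside $W^H$ while the other takes it out): here the parameter $x$ enters asymmetrically on the two sides of the braid relation, and a direct expansion shows that compatibility forces $x$ to be a root of $X^2=(q-1)X+q$, singling out precisely $x\in\{-1,q\}$. This is the structural reason why Deodhar's parabolic theory is bivalued, and once the module structure and the involutivity of the bar map are in hand, the rest of the existence argument is formal.
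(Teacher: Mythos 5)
Your argument is correct in outline, but note that the paper does not prove Theorem~\ref{7.1} at all: it is stated as a theorem--definition and the proof is deferred to Deodhar \cite{Deo87}, so the relevant comparison is with Deodhar's argument, which is exactly the route you take (uniqueness by induction on $\ell(v)$ via any $s\in D_L(v)$, using that $s\in D_L(v)$ and $v\in W^H$ force $sv\in W^H$; existence via the parabolic Hecke module and the bar involution). Where you deviate from Deodhar is that you define the $T_s$-action directly on the free module with basis $\{N_w\}_{w\in W^H}$ and must then check the quadratic and braid relations by hand; you flag this as the main obstacle but do not carry out the rank-two verification, and in fact the constraint $x^2=(q-1)x+q$ is forced already by the quadratic relation $T_s^2=(q-1)T_s+q$ applied to any $N_w$ with $sw\notin W^H$, before braid relations enter (the braid check itself, e.g.\ on the top element of a chain-type coset as in Proposition~\ref{partizione}, produces the same constraint). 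Deodhar sidesteps all of this by inducing from the one-dimensional representation $T_h\mapsto x$ of the parabolic subalgebra $\mathcal H_H$ (legitimate precisely because $x\in\{-1,q\}$), which gives the module structure, the stated action on $N_w=T_w\otimes 1$, and the cyclic generator $N_e$ for free. Relatedly, your bar involution is only ``characterized'', not shown to exist: well-definedness requires that the module be cyclic on $N_e$ (true, since $s\in D_L(w)$, $w\in W^H$ imply $sw\in W^H$, so $N_w=T_wN_e$) and that the annihilator ideal generated by the elements $T_h-x$, $h\in H$, be bar-stable, which follows from $T_h^{-1}-q^{-1}=q^{-1}(T_h-q)$ and $T_h^{-1}+1=q^{-1}(T_h+1)$. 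Granting these two standard verifications, your normalization $\overline{N_v}=\sum_u(-1)^{\ell(v)-\ell(u)}q^{-\ell(v)}R^{H,x}_{u,v}(q)\,N_u$ does reproduce the three branches of the recursion exactly (the third branch via $-qx^{-1}=q-1-x$), and together with your uniqueness induction the proof is complete and faithful to the source the paper cites.
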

In what follows, we often use the inductive formula of Theorem~\ref{7.1} without explicit mention.
\begin{thm}
\label{7.2}
Let $(W,S)$ be a Coxeter system, and $H \subseteq S$.
For each $x \in \{ -1,q \}$,
 there is a unique family of polynomials $\{ P^{H,x}_{u,v}(q) \} 
_{u,v \in  W^{H}}  \subseteq {\bf Z} [q]$, such that, for all 
$u,v \in W^{H}$:
\begin{enumerate}
\item $P^{H,x}_{u,v}(q)=0$ if $u \not \leq v$;
\item $P^{H,x}_{u,u}(q)=1$;
\item deg$(P^{H,x}_{u,v}(q)) \leq  \frac{1}{2}\left(
\ell(v)-\ell(u)-1 \right) $, if $u < v$;
\item 
\( q^{\ell(v)-\ell(u)} \, P^{H,x}_{u,v} \left( \frac{1}{q} \right)
= \sum _{z\in[u,v]_H}   R^{H,x}_{u,z}(q) \,
P^{H,x}_{z,v}(q). \)
\end{enumerate}
\end{thm}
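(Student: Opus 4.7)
The plan is to establish existence and uniqueness simultaneously by strong induction on $d:=\ell(v)-\ell(u)$, constructing $P^{H,x}_{u,v}$ from the $R$-polynomials already given by Theorem~\ref{7.1}.

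For \emph{uniqueness}, suppose $\{P^{H,x}_{u,v}\}$ and $\{{P'}^{H,x}_{u,v}\}$ both satisfy (1)--(4), and set $D_{u,v}:=P^{H,x}_{u,v}-{P'}^{H,x}_{u,v}$. Condition (2) handles the base case $u=v$. For $u<v$, the induction hypothesis gives $D_{z,v}\equiv 0$ for all $z\in(u,v]^{H}$, so subtracting the two instances of (4) and extracting the $z=u$ summand (where $R^{H,x}_{u,u}=1$) reduces to the functional equation $q^{d}D_{u,v}(q^{-1})=D_{u,v}(q)$. Since $\deg D_{u,v}\leq(d-1)/2$ by (3), matching coefficients of $q^{i}$ for $i\leq(d-1)/2$ forces $D_{u,v}\equiv 0$.

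For \emph{existence}, build $P^{H,x}_{u,v}$ by the same induction, setting $P^{H,x}_{u,u}:=1$. For $u<v$, rewrite (4) as
\[
q^{d}P^{H,x}_{u,v}(q^{-1})-P^{H,x}_{u,v}(q)=F_{u,v}(q), \qquad F_{u,v}(q):=\sum_{z\in(u,v]^{H}}R^{H,x}_{u,z}(q)\,P^{H,x}_{z,v}(q),
\]
whose right-hand side is already determined. A direct coefficient calculation shows that such an equation admits a (unique) polynomial solution of degree $\leq(d-1)/2$ precisely when $F_{u,v}$ is anti-palindromic of center $d/2$, i.e.\ $q^{d}F_{u,v}(q^{-1})=-F_{u,v}(q)$; taking this anti-palindromy for granted, the resulting $P^{H,x}_{u,v}$ satisfies (1)--(4) at $(u,v)$.

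The main obstacle is thus the anti-palindromy of $F_{u,v}$. Applying condition (4) inductively inside the definition of $F_{u,v}$ and swapping the order of summation, the required identity $q^{d}F_{u,v}(q^{-1})=-F_{u,v}(q)$ reduces to the parabolic $R$-polynomial inversion identity
\[
\sum_{z\in[u,y]^{H}}q^{\ell(z)-\ell(u)}R^{H,x}_{u,z}(q^{-1})\,R^{H,x}_{z,y}(q)=\delta_{u,y},
\]
for all $u\leq y$ in $W^{H}$, which encodes the involutivity of the bar map on Deodhar's parabolic Hecke module. This identity is proved by a separate induction on $\ell(y)$ using the three-case recursion of Theorem~\ref{7.1}; the first two cases collapse algebraically, while the third case, where $s\notin D_{L}(u)$ and $su\notin W^{H}$, produces the coefficient $(q-1-x)$ and must be verified separately for $x=q$ and $x=-1$. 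This case split is the technical crux of the argument.
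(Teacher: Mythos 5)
The paper itself contains no proof of this statement: Theorem~\ref{7.2} is presented as a theorem--definition of Deodhar, with the proof deferred to \cite[\S\S 2-3]{Deo87}, where it comes out of the bar involution on the parabolic Hecke module. Your argument is the standard polynomial-level version of that proof, and its skeleton is correct: the uniqueness induction (extract the $z=u$ term from condition (4) and compare degrees using (3)) is complete; the reduction of existence to the anti-palindromy $q^{d}F_{u,v}(q^{-1})=-F_{u,v}(q)$ is correct; and so is the further reduction --- via condition (4) applied to the pairs $(z,v)$ with $z>u$ and an interchange of summation --- to the inversion identity $\sum_{z\in[u,y]^{H}}q^{\ell(z)-\ell(u)}R^{H,x}_{u,z}(q^{-1})\,R^{H,x}_{z,y}(q)=\delta_{u,y}$. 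That identity is true (it encodes the involutivity of the bar operator on Deodhar's module $M^{H,x}$, which is how the cited source obtains it, and it checks out in examples for both $x=q$ and $x=-1$), so your outline is sound and in substance follows the same route as Deodhar, recast without the Hecke-module language.

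The one place where the proposal falls short of an actual proof is precisely this inversion identity: you state it and gesture at an induction on $\ell(y)$ through the three cases of Theorem~\ref{7.1}, but that induction is the entire technical content of the theorem and is not carried out. The point is not cosmetic: unlike the ordinary case, where $R_{u,v}(q)=(-q)^{\ell(v)-\ell(u)}R_{u,v}(q^{-1})$ makes the analogous identity an easy consequence, the parabolic $R$-polynomials have no such self-duality, exactly because of the $(q-1-x)$ branch of the recursion, so the identity genuinely needs an argument. Either import it from \cite{Deo87} (or prove it via the module and the bar involution), or write the induction out in full; in the latter case the separate treatment of $x=q$ and $x=-1$ that you anticipate can be avoided, since all one ever needs is that $x$ satisfies $(x+1)(x-q)=0$, in the same spirit as the paper's Eq.~(\ref{1}).
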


The polynomials $R^{H,x}_{u,v}(q)$ and $P^{H,x}_{u,v}(q)$ 
are  the {\em
parabolic Kazhdan--Lusztig $R$-polynomials} and {\em parabolic  Kazhdan--Lusztig polynomials} of $W^{H}$ of type $x$. 
\begin{rem}
\label{senso}
For a fixed $H\subset S$, the parabolic Kazhdan--Lusztig $R$-polynomials and the parabolic Kazhdan--Lusztig polynomials are equivalent. More precisely, given $w\in W^H$, it is possible to compute the family $\{P_{u,v}^{H,x}(q)\}_{u,v\in [e,w]^H}$ once one knows   the family $\{R_{u,v}^{H,x}(q)\}_{u,v\in [e,w]^H}$, and vice versa.
\end{rem}

For $H=\emptyset$, $R_{u,v}^{\emptyset ,-1}(q)=R_{u,v}^{\emptyset ,q}(q)$ and $
P_{u,v}^{\emptyset ,-1}(q)=P_{u,v}^{\emptyset ,q}(q)$
 are the ordinary Kazhdan--Lusztig $R$-polynomials $R_{u,v}(q)$ and Kazhdan--Lusztig polynomials $P_{u,v}(q)$  of $W$.
 
The following result gives another relationship between the parabolic Kazhdan--Lusztig polynomials and their ordinary counterparts  (see \cite[Proposition 3.4, and Remark 3.8]{Deo87}).
\begin{pro}
\label{7.3}
Let $(W,S)$ be a Coxeter system, $H \subseteq S$, and $u,v \in W^{H}$.
We have 
\[ P_{u,v}^{H,q}(q)=\sum _{w \in W_{H}}(-1)^{\ell(w)}P_{uw,v}(q) .\]
Furthermore, if $W_{H}$ is finite, then
\[ P_{u,v}^{H,-1}(q)=P_{uw_{0}^{H},vw_{0}^{H}}(q) , \]
where $w_{0}^{H}$ is the longest element of $W_H$. 
\end{pro}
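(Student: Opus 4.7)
The plan is to prove both identities by verifying, via the uniqueness characterization in Theorem~\ref{7.2}, that the right-hand sides satisfy the defining conditions of the corresponding parabolic Kazhdan--Lusztig polynomials.

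For the first identity, set $Q_{u,v}(q):=\sum_{w\in W_H}(-1)^{\ell(w)}P_{uw,v}(q)$ for $u,v\in W^H$. Conditions (1)--(3) of Theorem~\ref{7.2} are straightforward: if $u\not\leq v$, then by Proposition~\ref{fattorizzo}(i), $u\in W^H$ and $w\in W_H$ force the length-additive factorization $uw=u\cdot w$ with $(uw)^H=u$, so $uw\leq v\in W^H$ would give $u\leq v^H=v$, a contradiction, hence every $P_{uw,v}=0$; only $w=e$ contributes to $Q_{u,u}$; and $\deg P_{uw,v}\leq(\ell(v)-\ell(u)-\ell(w)-1)/2\leq(\ell(v)-\ell(u)-1)/2$. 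To verify condition (4), I would first prove the companion $R$-polynomial identity
\[
\sum_{w\in W_H}(-q)^{\ell(w)}R_{uw,y}(q)=(-1)^{\ell(y_H)}R^{H,q}_{u,y^H}(q),\qquad u\in W^H,\ y\in W,
\]
by induction on $\ell(y)$, matching the ordinary recursion for $R_{uw,y}$ against the three-case parabolic recursion of Theorem~\ref{7.1}. Condition (4) then follows by inserting the ordinary self-duality $q^{\ell(v)-\ell(uw)}P_{uw,v}(1/q)=\sum_y R_{uw,y}(q)P_{y,v}(q)$ into $q^{\ell(v)-\ell(u)}Q_{u,v}(1/q)$, interchanging the sums over $w$ and $y$, applying the $R$-identity, and regrouping $y=y^H\cdot y_H$ to recover $\sum_{z\in[u,v]^H}R^{H,q}_{u,z}(q)Q_{z,v}(q)$.

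For the second identity I would apply the same strategy to $Q'_{u,v}(q):=P_{uw_0^H,vw_0^H}(q)$: the length-additive factorizations $uw_0^H=u\cdot w_0^H$ and $vw_0^H=v\cdot w_0^H$ make conditions (1)--(3) transfer under the uniform length shift, and condition (4) reduces to the cleaner companion identity $R^{H,-1}_{u,z}(q)=R_{uw_0^H,zw_0^H}(q)$, also proved by induction. This identity is simpler than the first because the third-case coefficient $q-1-(-1)=q$ in Theorem~\ref{7.1} directly matches the $q$ of the ordinary $R$-recursion.

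The main obstacle throughout is the inductive proof of the two companion $R$-identities. For the first, after picking $s\in D_L(y)$, one must split the sum over $w\in W_H$ according to whether $sy^H$ stays in $W^H$ and whether $s\in D_L(y_H)$, and reassemble the pieces to match the three cases of Theorem~\ref{7.1}. The delicate case is $sy^H\notin W^H$, where pairing $w$ with $sw$ in $W_H$ produces a telescoping that collapses the signed ordinary $R$-polynomial sum to the factor $q-1-q=-1$ needed by the parabolic recursion.
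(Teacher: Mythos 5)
The paper itself does not prove Proposition~\ref{7.3}; it quotes it from Deodhar \cite{Deo87}, so your argument has to stand on its own. Your treatment of the first identity is sound in outline: conditions (1)--(3) of Theorem~\ref{7.2} are verified correctly, the companion identity $\sum_{w\in W_H}(-q)^{\ell(w)}R_{uw,y}(q)=(-1)^{\ell(y_H)}R^{H,q}_{u,y^{H}}(q)$ is true, and inserting the ordinary inversion formula, interchanging sums and regrouping $y=y^{H}\cdot y_H$ does yield condition (4). The induction proving that identity is only sketched (the case analysis must track both how $s$ interacts with $u$ -- this is what produces the three cases of Theorem~\ref{7.1}, via $su=ut$, $t\in H$, in the third case -- and how $(sy)^{H},(sy)_H$ relate to $y^{H},y_H$), but the cancellation mechanism you describe, collapsing to the factor $q-1-q=-1$, is the right one.

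The second identity, however, contains a genuine gap. The companion identity you invoke, $R^{H,-1}_{u,z}(q)=R_{uw_0^{H},zw_0^{H}}(q)$, is false: take $W$ of type $A_2$ with $S=\{s_1,s_2\}$, $H=\{s_2\}$, $u=e$, $z=s_2s_1$. Then $R^{H,-1}_{e,s_2s_1}(q)=(q-1-(-1))\,R^{H,-1}_{e,s_1}(q)=q(q-1)$, whereas $R_{s_2,\,s_2s_1s_2}(q)=R_{e,\,s_1s_2}(q)=(q-1)^2$. Moreover, even if such a termwise identity held, condition (4) for $Q'_{u,v}=P_{uw_0^{H},vw_0^{H}}$ would not reduce to it: the ordinary inversion formula for the pair $(uw_0^{H},vw_0^{H})$ sums over the whole interval $[uw_0^{H},vw_0^{H}]$, which contains many elements not of the form $zw_0^{H}$ with $z\in W^{H}$ (e.g.\ $s_2s_1\in[s_2,s_1s_2s_1]$ in the example above), and these terms must be absorbed. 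The workable version of your strategy groups the ordinary sum by cosets $zW_H$, uses that $H\subseteq D_R(vw_0^{H})$ forces $P_{y,vw_0^{H}}$ to be constant on each coset (by the standard fact that $P_{u,v}=P_{us,v}$ whenever $vs<v$), and then requires the companion identity $R^{H,-1}_{u,z}(q)=\sum_{w\in W_H}R_{uw_0^{H},\,zw}(q)$; in the example this reads $R_{s_2,s_2s_1}+R_{s_2,s_1s_2s_1}=(q-1)+(q-1)^2=q(q-1)$, as it should. As written, your proof of the $x=-1$ statement fails at this step.
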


\section{Preliminary results}
In this section, we give some preliminary results that are needed to prove the main result of this work. 

For convenience, we state the  following straightforward result here for later reference.  
\begin{lem}
\label{ricorsioni}
The sequence  $\{R_i\}_{i\geq 1}\subseteq \mathbb Z[q]$, defined as
$$R_i=\begin{cases}
(q-1)  \Big( \sum_{k=0}^{i-1} (-1)^k q^k \Big), & \text{if $i$ is odd},\\
 (q-1)^2  \Big( \sum_{k=0}^{\frac{i-2}{2}}  q^{2k} \Big), & \text{if $i$ is even},
\end{cases}$$
is the unique sequence satisfying 
$$R_i= (q-1) R_{i-1} + q R_{i-2}  \qquad R_1=(q-1) \quad R_2=(q-1)^2$$
\end{lem}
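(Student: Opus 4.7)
The plan is to observe that uniqueness is immediate: any sequence in $\mathbb{Z}[q]$ satisfying a second-order linear recurrence with two prescribed initial terms is uniquely determined. Hence it suffices to verify that the closed-form expression given in the statement satisfies the initial conditions $R_1=q-1$, $R_2=(q-1)^2$ and the recurrence $R_i=(q-1)R_{i-1}+qR_{i-2}$ for every $i\geq 3$.

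First, I would check the initial conditions directly from the formulas: for $i=1$, the odd-case sum collapses to $(q-1)\cdot 1=q-1$, and for $i=2$ the even-case sum collapses to $(q-1)^2\cdot 1$, as required. Next, to avoid a case split based on parity in the inductive step, I would rewrite both closed forms uniformly by means of the geometric-series identities $\sum_{k=0}^{i-1}(-1)^kq^k=\frac{1+q^i}{1+q}$ (for $i$ odd) and $\sum_{k=0}^{(i-2)/2}q^{2k}=\frac{q^i-1}{q^2-1}$ (for $i$ even). A short manipulation shows that, in both cases, the expression reduces to the single unified formula
$$R_i=\frac{(q-1)\bigl(q^i-(-1)^i\bigr)}{q+1}.$$

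With this compact form in hand, verifying the recurrence becomes the polynomial identity
$$(q-1)\bigl(q^{i-1}-(-1)^{i-1}\bigr)+q\bigl(q^{i-2}-(-1)^{i-2}\bigr)=q^i-(-1)^i,$$
which follows at once from $(-1)^{i-1}=-(-1)^{i}$ and $(-1)^{i-2}=(-1)^{i}$ by expanding and collecting the $(-1)^i$ terms. There is no genuine obstacle; the only minor annoyance is the bookkeeping of parities and index ranges in the two split expressions, and the unified closed form above is precisely what sidesteps this issue.
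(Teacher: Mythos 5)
Your proof is correct. The paper itself omits the proof of this lemma as straightforward, and your argument supplies exactly the routine verification that was left out: uniqueness is immediate from the two initial terms and the second-order recurrence, the initial conditions are checked directly, and the unified closed form $R_i=\frac{(q-1)\left(q^i-(-1)^i\right)}{q+1}$ (which does agree with both parity cases, since $q+1$ divides $q^i+1$ for $i$ odd and $(q-1)^2\frac{q^i-1}{q^2-1}=\frac{(q-1)(q^i-1)}{q+1}$ for $i$ even) makes the recurrence a one-line polynomial identity. Nothing further is needed.
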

\begin{proof}
Omitted.
\end{proof}
We observe the following fact. Let $W$ be a dihedral Coxeter groups, and $u,w\in W $. If   $\ell(w)-\ell(u)=i$, with $i \geq 1$, then  the ordinary Kazhdan--Lusztig $R$-polynomial $R_{u,v}(q)$ is the polynomial $R_i$ defined in Lemma \ref{ricorsioni}. In particular, as it is well-known, Conjecture~\ref{comb-inv-con} holds true for dihedral Coxeter groups since two intervals $[u,w]$ and $[u',w']$ in two dihedral Coxeter groups are isomorphic as posets if and only if $\ell(w)-\ell(u)=\ell(w')-\ell(u')$.

\bigskip

We fix an arbitrary Coxeter system $(W,S)$, a subset  $H \subset S$, and $s,t\in S$. For notational convenience, from now on we let $\bar{s}=t$ and $\bar{t}=s$.
Recall that, for every $x\in W$, the coset $W_{\{s,t\}} \;  x= \{g_{st} \, x :  g_{st} \in W_{\{s, t\}} \}$ is isomorphic, as a poset, to the dihedral Coxeter group $W_{\{s,t\}}$. 
\begin{pro}
\label{partizione}
Consider an arbitrary coset $W_{\{s,t\}} \cdot x$, where (we suppose without lack of generality) $ x \in \, ^{\{s, t\}}W$. The intersection   $(W_{\{s,t\}} \cdot x) \cap   W^H$ is one of the following set:
\begin{enumerate}
\item  $\emptyset $,
\item  $\{x\}$,
\item  $\{g_{st} \cdot x \,  : \,  g_{st}\in W_{\{s,t\}}, \, t\notin  D_R(g_{st})\}$,  
\item  $\{g_{st} \cdot x \, : \, g_{st}\in W_{\{s,t\}}, \, s\notin  D_R(g_{st})\}$, 
\item  $W_{\{s,t\}} \cdot x $.
\end{enumerate}

\end{pro}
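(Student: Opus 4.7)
The plan is to analyze, for each simple reflection $r \in S$, how the statement ``$r \in D_R(g \cdot x)$'' depends on $g$ as $g$ ranges over $W_{\{s,t\}}$. Since $y = g \cdot x$ lies in $W^H$ exactly when $D_R(y) \cap H = \emptyset$, understanding this dependence for each $r \in H$ determines the intersection; the five cases of the proposition will correspond to the five patterns that arise when all contributions from $r \in H$ are combined.

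The central technical step is a trichotomy: for every $r \in S$, exactly one of the following holds. (A) If $r \in D_R(x)$, then $r \in D_R(g \cdot x)$ for all $g \in W_{\{s,t\}}$. (B) If $r \notin D_R(x)$ and $x r x^{-1} \notin \{s,t\}$, then $r \notin D_R(g \cdot x)$ for any $g$. (C) If $r \notin D_R(x)$ and $x r x^{-1} = l \in \{s,t\}$, then $r \in D_R(g \cdot x)$ if and only if $l \in D_R(g)$. Case (A) follows from the observation that $x r \in {}^{\{s,t\}}W$ in this situation (any $l \in \{s,t\}$ lying in $D_L(xr)$ would force $\ell(l x r) \le \ell(x) - 2$, contradicting $\ell(l x r) \ge \ell(lx) - 1 = \ell(x)$), so $g \cdot (xr)$ is length-additive and $\ell(yr) = \ell(y) - 1$. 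For (B) and (C) the key remark is that whenever $l \in \{s,t\}$ is a left descent of $xr$, applying the Lifting Property to $x \lhd lx$ with $r \in D_R(lx) \setminus D_R(x)$ gives $xr \le lx$, and hence $xr = lx$ by equality of lengths; this forces $l = xrx^{-1}$ and shows that at most one such $l$ can occur. In case (C), $xr = l \cdot x$ is then the left factorization of $xr$, so $yr = (gl) \cdot x$ is length-additive, whence $r \in D_R(y)$ is equivalent to $\ell(gl) = \ell(g) - 1$, i.e., $l \in D_R(g)$. In case (B), no such $l$ exists, so $xr \in {}^{\{s,t\}}W$ and $g \cdot (xr)$ has length $\ell(y) + 1$.

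With the trichotomy in hand, the proposition reduces to bookkeeping. Partition $H$ into $H_A$, $H_B$, $H_C^s$, $H_C^t$ according to which branch each $h \in H$ sits in, where $H_C^s$ and $H_C^t$ are distinguished by whether $xhx^{-1} = s$ or $x h x^{-1} = t$. Then $g \cdot x \in W^H$ if and only if $H_A = \emptyset$ and $D_R(g) \cap L = \emptyset$, where $L := \{s : H_C^s \neq \emptyset\} \cup \{t : H_C^t \neq \emptyset\} \subseteq \{s,t\}$. The five cases of the proposition are recovered as follows: $H_A \neq \emptyset$ gives (1); $H_A = \emptyset$ with $L = \{s,t\}$ forces $g = e$ and gives (2); $L = \{t\}$ gives (3); $L = \{s\}$ gives (4); and $L = \emptyset$ gives (5).

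The main obstacle is the trichotomy itself, and within it the identification $xr = lx$ in case (C). Once this is secured via the Lifting Property together with the length-additivity of products in $W_{\{s,t\}} \cdot {}^{\{s,t\}}W$, everything else is a routine case analysis inside the dihedral group $W_{\{s,t\}}$.
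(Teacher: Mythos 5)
Your proof is correct, but it takes a genuinely different route from the paper's. The paper argues on the shape of the intersection inside the coset: it recalls that $W^H$ is closed under left multiplication by left descents and that an element outside $W^H$ has at most one coatom in $W^H$ (quoted from \cite[Lemma~4.1]{Mtrans}, and used to exclude the possibility that the intersection is the whole coset minus its top element $w_0(s,t)\cdot x$), and then shows by contradiction, via the Lifting Property, that if $g\cdot x\in W^H$, $g\neq e$, $p\in\{s,t\}\setminus D_L(g)$ and $p\cdot g\cdot x\notin W^H$, then $p\cdot g\cdot x=g\cdot x\cdot h$ for some $h\in H$, forcing both $s$ and $t$ into $D_L(p\cdot g)$ and hence $pg=w_0(s,t)$; so, walking up inside the coset, one can only leave $W^H$ at the top, and the five possible shapes follow. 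You instead classify, for each $r\in S$ (in particular each $h\in H$), the behaviour of the condition $r\in D_R(g\cdot x)$ uniformly in $g$: always ($r\in D_R(x)$), never ($r\notin D_R(x)$, $xrx^{-1}\notin\{s,t\}$), or exactly when $l=xrx^{-1}\in\{s,t\}$ lies in $D_R(g)$; your identification $xr=l\cdot x$ via the Lifting Property applied to $x\lhd lx$, together with the length-additivity of the $W_{\{s,t\}}\times\,^{\{s,t\}}W$ factorization, is the correct key step, and the final bookkeeping over the partition of $H$ is sound. Your route is self-contained (it does not invoke the coatom lemma of \cite{Mtrans} nor the closure of $W^H$ under left descents), treats the finite and infinite dihedral cases uniformly (no special argument at $w_0(s,t)$), and actually proves more than the statement: it identifies which of the five cases occurs, in terms of $D_R(x)\cap H$ and of which elements of $\{s,t\}$ arise as $xhx^{-1}$ with $h\in H$. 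The paper's argument is shorter on the page because it leans on previously established facts and leaves the final case analysis implicit.
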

\begin{proof}
First of all, recall that if an element $w$ belongs to $W^H$,  then $rw$ belongs to $W^H$  for all $r\in D_L(w)$. Also recall that an element not in $W^H$ have at most one coatom in $W^H$ (see \cite[Lemma 4.1]{Mtrans}); in particular, in the case $W_{\{s, t\}}$ is finite, the intersection $(W_{\{s,t\}} \cdot x) \cap   W^H$ cannot be $(W_{\{s,t\}} \cdot x) \setminus \{w_0 \cdot x\}$, where $w_0$ is the longest element of  $W_{\{s, t\}}$.

 We prove the statement by contradiction and, by what we have just recalled, we suppose  that there exist $g \in W_{\{s,t\}}\setminus \{e\}$ and $p\in \{s,t\}\setminus D_L(g)$ such that 
 
 \begin{itemize}
\item $g \cdot x \in W^H$,
\item  $p\cdot g\cdot x \notin W^H$,
\item  $p \cdot g $ is not the longest element of $ W_{\{s, t\}}$ (if any, i.e. in the case $W_{\{s, t\}}$ is finite).
\end{itemize}
 
 Since $p\cdot g \cdot x \notin W^H$, there exists $h\in H\cap D_R(p\cdot g \cdot x)$. Since $g \cdot x \in W^H$, we have $h\notin D_R(g \cdot x)$; by the Lifting Property (Lemma~\ref{ll}), $p\cdot g \cdot x= g\cdot x \cdot h$. Hence, both $s$ and $t$ belong to $D_L(p\cdot g\cdot x)$ and thus to $D_L(p\cdot g)$; by a well-known fact, this means that $p\cdot g$  is the longest element of $ W_{\{s, t\}}$.
\end{proof}
The following three results give formulas expressing some parabolic Kazhdan--Lusztig $R$-polynomials as linear combinations of other parabolic Kazhdan--Lusztig $R$-polynomials. 
   (The choice of the indices   could seem unnatural at this point: the reason for this choice is that, in Section~\ref{main}, we apply these results in a situation where we have  two missing parts $w_1$ and $u_1$, i.e.  two parts $w_1$ and $u_1$ that are both equal to $e$).
   \begin{lem}
\label{sesingleton}
Let $w= w_2 \cdot w_3 \in W^H$ and  $u= u_2 \cdot u_3 \in W^H$ with:
\begin{itemize}
\item $u\leq w$,
\item $ w_2,u_2 \in W_{\{s, t\}}$,  
\item $w_3, u_3 \in \,^{\{s, t\}}W$.
\end{itemize}
If $(W_{ \{s, t\} } \cdot u_3) \cap W^H= \{u_3 \}$, then  $u=u_3$ and 
$$ R^{H,x}_{u,w} (q)= (q-1-x) ^{\ell(w_2) }    R^{H,x}_{u_3,w_3} (q).$$
\end{lem}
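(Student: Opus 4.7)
The plan is to induct on $\ell(w_2)$. Since $u = u_2 \cdot u_3$ lies in the coset $W_{\{s,t\}} \cdot u_3$ and also in $W^H$, the hypothesis $(W_{\{s,t\}} \cdot u_3) \cap W^H = \{u_3\}$ forces $u_2 = e$ and $u = u_3$, giving the first claim. In particular, by Proposition~\ref{fattorizzo}(ii) combined with~\eqref{0}, we have $D_L(u) \cap \{s,t\} = D_L({}_{\{s,t\}}u) = D_L(u_2) = \emptyset$; this will be used repeatedly.

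The base case $\ell(w_2) = 0$ gives $w = w_3$, and the asserted identity reduces to the tautology $R_{u_3,w_3}^{H,x}(q) = R_{u_3,w_3}^{H,x}(q)$. The degenerate case $u = w$ is contained in the base case: uniqueness of the factorization in Proposition~\ref{fattorizzo}(ii) combined with $u_2 = e$ forces $w_2 = e$. For the inductive step, assume $\ell(w_2) \geq 1$ and $u < w$. Since $w_2 \in W_{\{s,t\}} \setminus \{e\}$, we can pick $p \in D_L(w_2) \subseteq \{s,t\}$, and by~\eqref{0} we also have $p \in D_L(w)$. A length count shows $pw = (pw_2) \cdot w_3$ with $\ell(pw_2) = \ell(w_2) - 1$, and $pw \in W^H$ by the general fact, recalled in the proof of Proposition~\ref{partizione}, that left multiplication by a left descent preserves membership in $W^H$.

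Now $p \notin D_L(u)$ since $D_L(u) \cap \{s,t\} = \emptyset$, and $pu = p \cdot u_3 \in (W_{\{s,t\}} \cdot u_3) \setminus \{u_3\}$, so by hypothesis $pu \notin W^H$. The third clause of the recursion in Theorem~\ref{7.1} therefore yields
$$R_{u,w}^{H,x}(q) = (q-1-x)\, R_{u,pw}^{H,x}(q).$$
The pair $(u, pw)$ satisfies all the hypotheses of the lemma, with dihedral part $pw_2 \in W_{\{s,t\}}$ of length $\ell(w_2) - 1$ and unchanged ${}^{\{s,t\}}W$-part $w_3$; the coset hypothesis is trivially preserved since $u$ and $u_3$ are not modified. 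The inductive hypothesis then gives $R_{u,pw}^{H,x}(q) = (q-1-x)^{\ell(w_2)-1} R_{u_3,w_3}^{H,x}(q)$, and multiplying by $(q-1-x)$ closes the induction.

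There is no serious obstacle: the argument is essentially forced by the structure of Theorem~\ref{7.1}, and the only point to verify is that each left descent peeled off lands in the third clause of the recursion, which is exactly what the coset hypothesis $(W_{\{s,t\}} \cdot u_3) \cap W^H = \{u_3\}$ guarantees.
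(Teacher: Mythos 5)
Your proposal is correct and follows essentially the same route as the paper: the coset hypothesis forces $u=u_3$, and then one peels off left descents of $w_2$ one at a time, landing each time in the third clause of the recursion of Theorem~\ref{7.1} because $p\notin D_L(u_3)$ and $p\cdot u_3\notin W^H$. The paper phrases this as ``iteration'' where you phrase it as induction on $\ell(w_2)$, and you supply the routine verifications (e.g.\ $u\leq pw$, $pw\in W^H$) that the paper leaves implicit.
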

\begin{proof}
If $w_2=e$, the assertions are trivial. Suppose $w_2\neq e$ and fix $r\in \{s,t\}\cap D_L(w_2)$.
By the recursive formula of Theorem~\ref{7.1} (with $r$ as left descent of $w$), we have $R^{H,x}_{u,w} (q)=(q-1-x)R^{H,x}_{u,rw} (q)$. We get the assertion by iteration.
\end{proof}

Recall that, for $r\in \{s,t\}$, we denote by $\bar{r}$ the element in $\{s,t\}\setminus \{r\}$.
\begin{lem}
\label{sepiena}
Let $w= w_2 \cdot w_3 \in W^H$ and  $u= u_2 \cdot u_3 \in W^H$ with:
\begin{itemize}
\item $u\leq w$,
\item $ w_2,u_2 \in W_{\{s, t\}}$,  
\item $w_3, u_3 \in \,^{\{s, t\}}W$,
\end{itemize}
and suppose $W_{ \{s, t\} } \cdot u_3 \subseteq W^H$.
Then
 there exists a set of polynomials $\{ p_{g_{st }} (q)\}_{g_{st}\in W_{\{s,t\}}}\subseteq \mathbb Z[q]$  such that  
\begin{eqnarray*}
R^{H,x}_{u,w} (q)&= &\sum_{ g_{st} \in W_{\{s, t\}} }  p_{g_{st }} (q) R^{H,x}_{g_{st} \cdot u_3,w_3} (q) \\
 R_{u,w} (q) &=& \sum_{ g_{st} \in W_{\{s, t\}} }  p_{g_{st }} (q) R_{g_{st}\cdot u_3,w_3} (q)
 \end{eqnarray*}
(in other words,  both the parabolic and the ordinary Kazhdan--Lusztig $R$-polynomials  indexed by $u$ and $w$ can be expressed as a linear combination of, respectively, the parabolic and the ordinary Kazhdan--Lusztig $R$-polynomials  indexed by $g_{st}\cdot u_3$ and $w_3$, with $g_{st}\in W_{\{s,t\}}$, and the two expressions have the same coefficients). 

If, moreover,  $|\{x\in \{s,t\} :x\leq w_3\}|\leq 1$, 
then $\ell(w_2)-\ell (u_2) \geq -1$ and the following statements hold.
\begin{itemize}
\item[D$_{-1}$.] 
If $\ell(w_2)-\ell (u_2) = -1 $, then
$$ R^{H,x}_{u,w} (q)= 
 R^{H,x}_{pu_3,w_3} (q) .$$
\item[D$_{0}$.] If $\ell(w_2)-\ell (u_2) = 0 $,  then 
$$ R^{H,x}_{u,w} (q)= 
\left\{
\begin{array}{ll}
 R^{H,x}_{u_3,w_3} (q)  &   \text{if $u_2=w_2$}\\
(q-1) R^{H,x}_{pu_3,w_3} (q)  &   \text{if $u_2 \neq w_2$.}
\end{array}
\right.
$$
\item[D$_{1}$.] If $\ell(w_2)-\ell (u_2) = 1 $,  then
$$ R^{H,x}_{u,w} (q)= 
\left\{
\begin{array}{ll}
(q-1)  R^{H,x}_{u_3,w_3} (q)+ q R^{H,x}_{pu_3,w_3} (q) ,  &     \text{if $w_2=u_2 \cdot p$}\\
(q-1)  R^{H,x}_{u_3,w_3} (q),  &   \text{otherwise.}
\end{array}
\right.
$$
\item[D$_{i}$.] If $\ell(w_2)-\ell (u_2) =i \geq 2$,  then
$$ R^{H,x}_{u,w} (q)= 
R_i  \cdot  R^{H,x}_{u_3,w_3} (q)+ q R_{i-1} \cdot R^{H,x}_{pu_3,w_3} (q) $$
where
the family of polynomials $\{R_j\}_{j\geq 1}$ is as in Lemma~\ref{ricorsioni}.
\end{itemize}
In the previous statements, if  $|\{x\in \{s,t\} :x\leq w_3\}|\leq 1$ then  $\{p\}= \{x\in \{s,t\} :x\leq w_3\}$, if $|\{x\in \{s,t\} :x\leq w_3\}|=0$ then $ R^{H,x}_{pu_3,w_3} (q)=0$.
\end{lem}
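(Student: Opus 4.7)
The plan is to establish the lemma in two stages: first the common-coefficient assertion (existence of polynomials $p_{g_{st}}(q)$ valid for both the parabolic and the ordinary $R$-polynomials), then the explicit formulas D$_{-1}$, D$_0$, D$_1$, D$_i$.

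For the first assertion, I would argue by induction on $\ell(w_2)$. If $w_2 = e$ the statement is immediate: take $p_e(q) = 1$ and $p_{g_{st}}(q) = 0$ otherwise, since $u = u_3$. For $\ell(w_2) \geq 1$, pick $r \in D_L(w_2)$; by Proposition~\ref{fattorizzo}(ii) and (\ref{0}), one has $r \in \{s,t\}$ and $r \in D_L(w)$. Applying the recursion of Theorem~\ref{7.1} to $R^{H,x}_{u,w}(q)$ with this descent, the crucial observation is that, whatever the relation of $r$ to $D_L(u)$, the element $ru$ factors as $(ru_2) \cdot u_3$ with $ru_2 \in W_{\{s,t\}}$, and the hypothesis $W_{\{s,t\}} \cdot u_3 \subseteq W^H$ then forces $ru \in W^H$. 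Consequently the third case of the parabolic recursion never occurs, and the recursion coincides coefficient-wise with the ordinary recursion for $R_{u,w}(q)$. Since $rw = (rw_2) \cdot w_3$ satisfies the inductive hypothesis (with $\ell(rw_2) < \ell(w_2)$), the same polynomial expression decomposes both $R^{H,x}_{u,w}(q)$ and $R_{u,w}(q)$ over the desired basis.

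Given the first assertion, the explicit formulas need only be verified for the ordinary $R$-polynomial $R_{u,w}(q)$. The extra hypothesis $|\{x \in \{s,t\} : x \leq w_3\}| \leq 1$ controls which elements $g_{st} \cdot u_3$ appear below $w$: the Subword Property (Theorem~\ref{subword}) shows that at most one extra letter from $\{s,t\}$ (the element $p$, when it exists) can be borrowed from a reduced expression of $w_3$. From this one deduces the bound $\ell(w_2) - \ell(u_2) \geq -1$. In the boundary case D$_{-1}$ this borrowing is forced, so $u$ has shape $p \cdot w_2 \cdot u_3'$ with $p u_3 \leq w_3$; repeated application of the first case of Theorem~\ref{7.1} then peels off the common prefix $p \cdot w_2$ to yield D$_{-1}$.

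The formulas D$_0$, D$_1$, and D$_i$ then follow by induction on $\ell(w_2)$, once more applying the recursion with $r \in D_L(w_2) \cap \{s,t\}$ and matching the case distinction of Theorem~\ref{7.1} against the subcases of D$_0$ and D$_1$. The coefficients of $R^{H,x}_{u_3,w_3}(q)$ and $R^{H,x}_{pu_3,w_3}(q)$ that emerge satisfy the second-order recurrence $R_i = (q-1)R_{i-1} + qR_{i-2}$ of Lemma~\ref{ricorsioni}, with initial conditions supplied by D$_0$ and D$_1$, and this identifies them as $R_i$ and $q R_{i-1}$ respectively. The main obstacle will be the bookkeeping here: one must verify that each branch of the recursion (depending on whether $r \in D_L(u)$, whether the new pair $(ru_2, rw_2)$ still satisfies the standing hypotheses, and whether we are in the $u_2 = w_2$ or $u_2 \neq w_2$ subcase) interleaves correctly so that the two tracked coefficients satisfy the dihedral recurrence, and that the \emph{otherwise} clauses in D$_0$ and D$_1$ are consistent with the convention $R^{H,x}_{pu_3, w_3}(q) = 0$ when $p$ does not exist.
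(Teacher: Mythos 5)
Your overall strategy coincides with the paper's: iterate the left-descent recursion of Theorem~\ref{7.1} through $w_2$, observe that $W_{\{s,t\}}\cdot u_3\subseteq W^H$ makes the $(q-1-x)$ branch impossible (which gives the common-coefficient statement, since the parabolic and ordinary recursions then agree step by step), and use the alternating-subword argument coming from $|\{x\in\{s,t\}:x\le w_3\}|\le 1$ both for the bound $\ell(w_2)-\ell(u_2)\ge -1$ and for killing unwanted terms. Up to a slip in D$_{-1}$ (the forced configuration is $u_2=w_2\cdot p$, so one peels off the common prefix $w_2$, not ``$p\cdot w_2$''), that part of your plan is sound and is what the paper does.

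The genuine gap is in your passage to D$_i$. You assert that the coefficients of $R^{H,x}_{u_3,w_3}$ and $R^{H,x}_{pu_3,w_3}$ satisfy the recurrence of Lemma~\ref{ricorsioni} ``with initial conditions supplied by D$_0$ and D$_1$'', which then identifies them as $R_i$ and $qR_{i-1}$. As stated this fails: D$_0$ and D$_1$ are not uniformly of the shape $R_j\,R^{H,x}_{u_3,w_3}+qR_{j-1}\,R^{H,x}_{pu_3,w_3}$, since in the subcases $u_2\ne w_2$ (D$_0$) and $w_2\ne u_2\cdot p$ (D$_1$) the answers are $(q-1)R^{H,x}_{pu_3,w_3}$ and $(q-1)R^{H,x}_{u_3,w_3}$, which do not fit the pattern; note also that these ``otherwise'' subcases occur even when $p$ exists, so they are not disposed of by the convention $R^{H,x}_{pu_3,w_3}=0$. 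Consequently the recurrence does not simply propagate from $i=0,1$ to $i=2,3$: when the recursion with $l\in D_L(w_2)\setminus D_L(u_2)$ lands on pairs of length-difference $0$ or $1$, one must determine which subcase occurs and invoke further Bruhat non-comparabilities obtained from the Subword Property (for instance $\bar r r u_3\not\le w_3$, $l u_2 u_3\not\le u_2 p w_3$, and $R^{H,x}_{ru_3,w_3}=0$ when $r\ne p$), so that the anomalous contributions either vanish or recombine into the uniform formula; this is precisely why the paper treats $i=2$ and $i=3$ by separate explicit case analyses and only runs the clean two-term induction for $i\ge 4$, where it calls exclusively on D$_{i-1}$ and D$_{i-2}$ with $i-2\ge 2$. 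You flag this as bookkeeping, but it is the substantive content of D$_i$; without carrying it out, the identification of the coefficients as $R_i$ and $qR_{i-1}$ is not justified.
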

\begin{proof}
Let us prove the first statement. If $w_2=e$, it is  trivial. Suppose $w_2\neq e$ and fix $r\in \{s,t\}\cap D_L(w_2)$. We apply the recursive formula of Theorem~\ref{7.1} (with $r$ as a left descent of $w$) to compute both $R^{H,x}_{u,w} (q)$ and $ R_{u,w} (q)$. Since $W_{ \{s, t\} } \cdot u_3 \subseteq W^H$, we cannot fall into the case when the factor $(q-1-x)$ occurs, and the two computations agree. We get the assertion by iterating this argument.

\medskip 
Let us prove the second part of the lemma and so suppose $\{p\} \supseteq \{x\in \{s,t\} :x\leq w_3\}$.

In this proof, we use the Subword Property (Theorem~\ref{subword}), Property (\ref{0}), and the recursive formula of Theorem~\ref{7.1} several times without explicit mention; when we apply the  recursive formula of Theorem~\ref{7.1}, we never fall into the case the factor $(q-1-x)$ occurs, since $W_{ \{s, t\} } \cdot u_3 \subseteq W^H$.

Since  $|\{x\in \{s,t\}:x\leq w_3\}|\leq 1$, the longest subword of type $s\textrm{-}t\textrm{-}s\textrm{-}t\textrm{-}\cdots$ or $t\textrm{-}s\textrm{-}t\textrm{-}s\textrm{-}\cdots$ of any reduced expression for $w$  has length  at most $ \ell(w_2) +1$, and hence  $\ell(w_2)-\ell (u_2) \geq -1$.

\bigskip 

Proof of D$_{-1}$. \quad Since $\ell(w_2)-\ell (u_2) = -1 $, necessarily $u_2= w_2 \cdot p$, as otherwise $u$ could not be smaller than or equal to $w$. We have 
$$ R^{H,x}_{u,w} (q)= R^{H,x}_{w_2pu_3,w_2w_3} (q)=
 R^{H,x}_{pu_3,w_3} (q) .$$

\medskip
Proof of D$_{0}$. \quad If  $u_2=w_2$, the assertion is immediate. If  $u_2\neq w_2$, there exists an element $v\in W_{\{s,t\}}$, with $\ell(v)= \ell(w_2)-1=\ell(u_2)-1$, such that  
$u_2=v\cdot p$ and $w_2=l\cdot v$, where $l\in\{s,t\}\setminus D_L(v)$.
We have 
$$ R^{H,x}_{u,w} (q)= R^{H,x}_{vpu_3,lvw_3} (q)=(q-1) R^{H,x}_{vpu_3,vw_3} (q)+ q R^{H,x}_{lvpu_3,vw_3} (q)=(q-1) R^{H,x}_{pu_3,w_3} (q)+ q R^{H,x}_{lvpu_3,vw_3} (q).
$$
We cannot have $l\cdot v \cdot p \cdot u_3 \leq v \cdot w_3$, since  all subwords of any reduced expression of $ v \cdot w_3$ of type $s\textrm{-}t\textrm{-}s\textrm{-}t\textrm{-}\cdots$ or $t\textrm{-}s\textrm{-}t\textrm{-}s\textrm{-}\cdots$ have length at most $ \ell(v)+1$, while $\ell(l\cdot v \cdot p)=\ell(v)+2$. Hence $R^{H,x}_{lvpu_3,vw_3} (q)=0$,  as desired.

\medskip
Proof of D$_{1}$.  \quad Since $\ell(w_2)-\ell (u_2) = 1 $, we have either 
\begin{enumerate}
\item $w_2=u_2 \cdot r$,  where $r\in\{s,t\}\setminus D_R(u_2)$, or
\item $w_2= l \cdot u_2$, where $l\in\{s,t\}\setminus D_L(u_2)$, $u_2\neq e$, and $l \cdot u_2$ is not the longest element of $W_{\{s,t\}}$ (if any).
\end{enumerate}
In the first case, we have
$$ R^{H,x}_{u,w} (q)=  R^{H,x}_{u_2u_3,u_2 r w_3} (q)= R^{H,x}_{u_3, r w_3} (q)=
(q-1)   R^{H,x}_{u_3,w_3} (q) +q  R^{H,x}_{r u_3,w_3} (q).$$
If $r=p$, we get the assertion. If $r\neq p$, then $r\not\leq w_3$; thus $r u_3 \not \leq w_3$, and we get the assertion as well. 

In the second case, we have
\begin{eqnarray*}
  R^{H,x}_{u,w} (q) &=&  R^{H,x}_{u_2u_3, l u_2  w_3} (q)=(q-1)R^{H,x}_{u_2u_3, u_2  w_3} (q)+ q R^{H,x}_{l u_2u_3, u_2  w_3} (q)=(q-1)R^{H,x}_{u_3,  w_3} (q)
\end{eqnarray*}
since $l \cdot u_2 \cdot u_3 \not\leq  u_2 \cdot w_3$. 

\medskip
Proof of D$_{i}$. \quad
Suppose $\ell(w_2)-\ell (u_2) = 2 $; we have either 
\begin{itemize}
\item $w_2=u_2 \cdot r  \cdot \bar{r} $, where $r\in\{s,t\}\setminus D_R(u_2)$, or 
\item $w_2= l  \cdot u_2  \cdot r$, where $l\in\{s,t\}\setminus D_L(u_2)$, $r\in\{s,t\}\setminus D_R(u_2)$, $u_2\neq e$, and $l \cdot u_2 \cdot r$ is not the longest element of $W_{\{s,t\}}$ (if any).  
\end{itemize}
In the first case, we have
\begin{eqnarray*}
 R^{H,x}_{u,w} (q)&=& R^{H,x}_{u_2u_3,u_2 r \bar{r}  w_3} (q)=R^{H,x}_{u_3, r \bar{r}  w_3} (q)= (q-1)R^{H,x}_{u_3,  \bar{r} w_3} (q)+ q R^{H,x}_{r u_3,  \bar{r} w_3} (q)\\
&=&(q-1) [(q-1)   R^{H,x}_{u_3,w_3} (q)+ q R^{H,x}_{\bar{r}u_3,w_3} (q) ]+q[(q-1)  R^{H,x}_{r u_3,w_3} (q)+ q R^{H,x}_{\bar{r} r u_3,w_3} (q) ]\\
&=&(q-1)^2   R^{H,x}_{u_3,w_3} (q)+ q(q-1) [R^{H,x}_{\bar{r}u_3,w_3} (q)+  R^{H,x}_{r u_3,w_3} (q)].
\end{eqnarray*}
since $\bar{r} r u_3 \not \leq w_3$. Thus the assertion follows since $\{\bar{r}u_3, ru_3\}\cap \{x: x\leq w_3\}=
 \{pu_3\}\cap \{x:x\leq w_3\}$.
 
In the second case, we have
\begin{eqnarray*}
 R^{H,x}_{u,w} (q)&=& R^{H,x}_{u_2u_3, lu_2 r w_3} (q)=(q-1)R^{H,x}_{u_2u_3, u_2 r w_3} (q)+q R^{H,x}_{lu_2u_3, u_2 r w_3} (q)\\
&=& (q-1)R^{H,x}_{u_3, r w_3} (q)+q R^{H,x}_{lu_2u_3, u_2 r w_3} (q)\\
& = &(q-1)[(q-1)R^{H,x}_{u_3,  w_3} (q)+q R^{H,x}_{ru_3, w_3} (q)]+ q R^{H,x}_{lu_2u_3, u_2 r w_3} (q).
\end{eqnarray*}
If $r=p$, then $lu_2u_3 \not\leq  u_2 r w_3$: thus $R^{H,x}_{lu_2u_3, u_2 r w_3} (q)=0$ and we get the assertion.
If $r\neq p$, then $r u_3 \not \leq   w_3$ and thus $R^{H,x}_{ru_3,  w_3} (q)=0$; on the other hand, $lu_2u_3 \leq  u_2 r w_3$ and
$R^{H,x}_{lu_2u_3, u_2 r w_3} (q)= (q-1)R^{H,x}_{p u_3,  w_3} (q) $ by Statement $D_0$.

\medskip
Suppose $\ell(w_2)-\ell (u_2) = 3 $; we have either 
\begin{itemize}
\item $w_2=u_2 \cdot r \cdot  \bar{r}  \cdot r$, , where $r\in\{s,t\}\setminus D_R(u_2)$, or 
\item $w_2= l  \cdot u_2 \cdot r  \cdot \bar{r} $, where $l\in\{s,t\}\setminus D_L(u_2)$, $r\in\{s,t\}\setminus D_R(u_2)$, $u_2\neq e$, and $l \cdot u_2 \cdot r \cdot \bar{r} $ is not the longest element of $W_{\{s,t\}}$ (if any). 
\end{itemize}
In the first case, we have
\begin{eqnarray*}
 R^{H,x}_{u,w} (q)&=& R^{H,x}_{u_2u_3,u_2 r \bar{r} r w_3} (q)=R^{H,x}_{u_3, r \bar{r} r w_3} (q)= (q-1)R^{H,x}_{u_3,  \bar{r} r w_3} (q)+ q R^{H,x}_{r u_3,  \bar{r} r w_3} (q)\\
&=& (q-1)[(q-1)^2R^{H,x}_{u_3,  w_3} (q)+q(q-1) R^{H,x}_{pu_3, w_3} (q)] + q(q-1)R^{H,x}_{u_3,  w_3} (q)\\
 &=&R_3 \cdot R^{H,x}_{u_3,  w_3} (q)+ q R_2 \cdot  R^{H,x}_{pu_3, w_3} (q),
\end{eqnarray*}
by the assertion (already proved) for when the difference of the length is equal to 2, and by Statement $D_1$.

In the second case, we have
\begin{eqnarray*}
 R^{H,x}_{u,w} (q)&=& R^{H,x}_{u_2u_3, lu_2 r \bar{r} w_3} (q)=(q-1)R^{H,x}_{u_2u_3, u_2 r \bar{r}  w_3} (q)+q R^{H,x}_{lu_2u_3, u_2 r \bar{r}  w_3} (q)\\
&=& (q-1)[(q-1)^2R^{H,x}_{u_3,  w_3} (q)+q(q-1) R^{H,x}_{pu_3, w_3} (q)] +q (q-1)R^{H,x}_{u_3,  w_3} (q) \\
&=&R_3 \cdot R^{H,x}_{u_3,  w_3} (q)+ q R_2 \cdot  R^{H,x}_{pu_3, w_3} (q)
\end{eqnarray*}
by the assertion (already proved) for when the difference of the length is equal to 2, and by Statement $D_1$.

\bigskip

Suppose $\ell(w_2)-\ell (u_2)=i $, with $i\geq 4$, and use induction on $\ell(w_2)$.
The base of the induction is $u_2=e$ and $w_2\in \{p \bar{p} p \bar{p},  \bar{p}p \bar{p}p \}$:  the assertion follows by a direct computation that we omit.

Let $\ell(w_2)>4$ and  $h \in D_L(w_2)$. If $h \in D_L(u_2)$, then $ R^{H,x}_{u,w} (q)= R^{H,x}_{u_2u_3,w_2w_3} (q)=R^{H,x}_{hu_2u_3,hw_2w_3} (q)$ and we may conclude by the  induction hypothesis since $\ell(hw_2)< \ell(w_2)$, and   $\ell(w_2) - \ell(u_2)=\ell(hw_2) - \ell(hu_2)$. 
If $h\not \in D_L(u_2)$,  then 
\begin{eqnarray*}
 R^{H,x}_{u,w} (q)&=&R^{H,x}_{u_2u_3, w_2 w_3} (q)= (q-1)R^{H,x}_{u_2u_3, h w_2  w_3} (q)+q R^{H,x}_{ hu_2u_3, hw_2   w_3} (q)\\
&=& (q-1)[R_{i-1} \cdot R^{H,x}_{u_3,  w_3} (q)+ qR_{i-2} \cdot  R^{H,x}_{pu_3, w_3} (q)] +
q [R_{i-2} \cdot R^{H,x}_{u_3,  w_3} (q)+ q R_{i-3} \cdot  R^{H,x}_{pu_3, w_3} (q)]\\
&=& [(q-1)R_{i-1} + q R_{i-2}] \cdot R^{H,x}_{u_3,  w_3} (q)+ q[(q-1) R_{i-2} + q R_{i-3} \cdot  R^{H,x}_{pu_3, w_3} (q)] \\
&=&R_i \cdot R^{H,x}_{u_3,  w_3} (q)+ qR_{i-1} \cdot  R^{H,x}_{pu_3, w_3} (q)
\end{eqnarray*}
where the last equation follows by Lemma~\ref{ricorsioni}.
\end{proof}

\begin{rem}
Lemma~\ref{sesingleton} and the first part of Lemma~\ref{sepiena} hold more generally (with the same straightforward proof) if we replace $\{s,t\}$ with an arbitrary subset $J\subseteq S$. 
\end{rem}

In the proof of the following result, as well as in the proof of the main result of this work, it is essential   $x\in \{q, -1\}$; indeed, we repeatedly use that $x$ satisfies
\begin{eqnarray}
\label{1}
(q-1) (q-1-x) +q=(q-1-x)^2.
\end{eqnarray}

\begin{lem}
\label{protutti}
Let $w= w_2 \cdot w_3 \in W^H$ and  $u= u_2 \cdot u_3 \in W^H$ with:
\begin{itemize}
\item $u\leq w$,
\item $w_2, u_2 \in W_{\{s,t\}}$, 
\item $w_3, u_3 \in \,^{\{s,t\}}W$,
\item  $|\{x\in \{s,t\} :x\leq w_3\}|\leq 1$.
\end{itemize}
Suppose that  $(W_{ \{s, t\} } \cdot u_3) \cap W^H$ is a chain (see Proposition~\ref{partizione}) and let $r,\bar{r}\in \{s,t\}$ be such that $r \cdot u_3\in W^H$ and $\bar{r} \cdot u_3\notin W^H$.

Then $\ell(w_2)-\ell (u_2) \geq -1$ and the following statements hold.
\begin{itemize}
\item[D$_{-1}$.] 
If $\ell(w_2)-\ell (u_2) = -1 $, then
$$ R^{H,x}_{u,w} (q)= 
 R^{H,x}_{ru_3,w_3} (q) .$$
\item[D$_{0}$.] If $\ell(w_2)-\ell (u_2) = 0 $,  then 
$$ R^{H,x}_{u,w} (q)= 
\left\{
\begin{array}{ll}
 R^{H,x}_{u_3,w_3} (q)  &   \text{if $u_2=w_2$}\\
(q-1) R^{H,x}_{ru_3,w_3} (q)  &   \text{if $u_2 \neq w_2$}
\end{array}
\right.
$$
\item[D$_{1}$.] If $\ell(w_2)-\ell (u_2) = 1 $,  then
$$ R^{H,x}_{u,w} (q)= 
\left\{
\begin{array}{ll}
(q-1-x)   R^{H,x}_{u_3,w_3} (q)  &   \text{if $\bar{r}\in D_R(w_2)$}\\
(q-1)  R^{H,x}_{u_3,w_3} (q)  &   \text{if $\bar{r}\notin D_R(w_2)$ and $u_2\neq e$}\\
(q-1) R^{H,x}_{u_3,w_3} (q)+ q R^{H,x}_{ru_3,w_3} (q) ,  &     \text{if $\bar{r}\notin D_R(w_2)$ and $u_2= e$}
\end{array}
\right.
$$
\item[D$_{2}$.] If $\ell(w_2)-\ell (u_2) = 2 $,  then
$$ R^{H,x}_{u,w} (q)= 
\left\{
\begin{array}{ll}
(q-1-x) [(q-1)   R^{H,x}_{u_3,w_3} (q)+ q R^{H,x}_{ru_3,w_3} (q) ],  &   \text{if $r\in D_R(w_2)$}\\
(q-1) [(q-1-x)   R^{H,x}_{u_3,w_3} (q)+ q R^{H,x}_{ru_3,w_3} (q) ],  &   \text{if $r\notin D_R(w_2)$}
\end{array}
\right.
$$
\item[D$_{i}$.] If $\ell(w_2)-\ell (u_2) \geq 3$,  then
$$ R^{H,x}_{u,w} (q)= 
(q-1) (q-1-x) ^{\ell(w_2) - \ell(u_2)-2} [(q-1-x)  R^{H,x}_{u_3,w_3} (q)+ q R^{H,x}_{ru_3,w_3} (q)  ]
$$
\end{itemize}
\end{lem}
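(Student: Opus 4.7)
The plan is to prove the formulas by induction on $\ell(w_2)$, applying at each step the recursive formula of Theorem~\ref{7.1} to a left descent $h \in D_L(w_2) \subseteq D_L(w)$. First, the inequality $\ell(w_2) - \ell(u_2) \geq -1$ is established exactly as in Lemma~\ref{sepiena}: the hypothesis that at most one of $s, t$ lies below $w_3$ ensures that every reduced expression of $w$ has alternating $\{s,t\}$-subwords of length at most $\ell(w_2)+1$, and the Subword Property constrains $u_2$ accordingly.

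The essential new ingredient, compared with Lemma~\ref{sepiena}, is tracking when $hu$ still belongs to $W^H$. Writing $hu = (hu_2) \cdot u_3$ with $hu_2 \in W_{\{s,t\}}$, and using Proposition~\ref{partizione} to identify the chain $(W_{\{s,t\}} \cdot u_3) \cap W^H$ as $\{g \in W_{\{s,t\}} : \bar r \notin D_R(g)\}$, we see that $hu \in W^H$ if and only if $hu_2$ does not end in $\bar r$. A direct check in the dihedral group $W_{\{s,t\}}$ yields the dichotomy: if $u_2 \neq e$, then $hu_2$ ends in the same letter as $u_2$ (namely $r$) and hence remains in the chain; if $u_2 = e$, then $hu_2 = h$, which falls out of the chain precisely when $h = \bar r$. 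Thus the $(q-1-x)$-branch of the recursion of Theorem~\ref{7.1} is invoked in exactly one scenario: $u_2 = e$ and the chosen left descent of $w$ is $\bar r$.

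Case $D_{-1}$ then follows by an argument parallel to Lemma~\ref{sepiena}'s $D_{-1}$: a subword analysis (using the hypothesis on $w_3$) shows that the scenarios with $r \in D_R(w_2)$ are incompatible with $u \leq w$, so necessarily $u_2 = w_2 \cdot r$ with additive lengths, whence $D_L(u_2) = D_L(w_2)$ and $\ell(w_2)$ successive applications of the first branch of the recursion yield $R^{H,x}_{u,w} = R^{H,x}_{ru_3, w_3}$ in the style of Lemma~\ref{sesingleton}. The cases $D_0$, $D_1$, $D_2$ I would verify directly for $\ell(w_2) \leq 2$ and then extend by induction: at each step I choose $h$ to be the unique left descent of $w_2$, split on whether $h \in D_L(u_2)$, and in the negative branch further split on whether $u_2 = e$ and $h = \bar r$ (which is the only configuration triggering the $(q-1-x)$ factor). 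In every configuration the resulting polynomials $R^{H,x}_{u, hw}$ and $R^{H,x}_{hu, hw}$ fall into cases of the Lemma with strictly smaller $\ell(w_2)$, so the induction hypothesis applies.

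Case $D_i$ for $i \geq 3$ follows the same inductive pattern and is the most delicate. When $u_2 = e$ and the leftmost letters of $w_2$ are configured so that stripping repeatedly enters the $h = \bar r$ branch, the accumulated contributions from the second and third branches of the recursion must be collapsed by invoking the identity $(q-1)(q-1-x) + q = (q-1-x)^2$ from~(\ref{1}); this is precisely what generates the power $(q-1-x)^{i-2}$ in the closed form, and it is exactly where the restriction $x \in \{-1, q\}$ is essential. I expect the main obstacle to be the bookkeeping of how the inductive outputs for $R^{H,x}_{hu, hw}$ and $R^{H,x}_{u, hw}$ combine within each sub-case into a single expression matching the stated formula; once that algebraic reconciliation via~(\ref{1}) is executed carefully, the rest of the argument is routine.
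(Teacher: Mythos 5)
Your overall strategy is the same as the paper's: induct on $\ell(w_2)$, strip a left descent $h\in D_L(w_2)\subseteq D_L(w)$ via Theorem~\ref{7.1}, track membership in $W^H$ through the description of $(W_{\{s,t\}}\cdot u_3)\cap W^H$ as $\{g\cdot u_3:\bar r\notin D_R(g)\}$, and collapse the accumulated branches with the identity $(q-1)(q-1-x)+q=(q-1-x)^2$ of~(\ref{1}). The paper differs only in bookkeeping: it treats the small differences by explicit case analysis on the shape of $w_2$ relative to $u_2$ (in particular, when $w_2=u_2\cdot\bar r\cdots$ it strips $u_2$ first, so that the $(q-1-x)$ branch is only met after the $W_{\{s,t\}}$-part of $u$ has become trivial) and runs the induction only for large differences, where it invokes exactly your dichotomy (``$\bar l u\in W^H$ since $u_2\neq e$'').

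There is, however, a genuine gap in that dichotomy as you state it. The claim ``if $u_2\neq e$ then $hu_2$ still ends in $r$, hence $hu\in W^H$'' fails when $W_{\{s,t\}}$ is finite and $hu_2$ is its longest element $w_0(s,t)$: then $\bar r\in D_R(hu_2)$, so $hu\notin W^H$ and the $(q-1-x)$ branch fires even though $u_2\neq e$. Since in your inductive step $\ell(hu_2)=\ell(u_2)+1$, this can only happen when $\ell(u_2)=m_{s,t}-1$, so it is automatically excluded whenever $\ell(w_2)-\ell(u_2)\geq 2$ (there $\ell(hu_2)\leq\ell(w_2)-1<m_{s,t}$), but it is \emph{not} vacuous in the low cases which you propose to settle by the same induction. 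For instance, in type $A_3$ take $\{s,t\}=\{s_1,s_3\}$, $H=\{s_1\}$, $u_3=e$, $u=u_2=s_3$, $w_2=s_1$, $w_3=s_2s_3$, $w=s_1s_2s_3$: all hypotheses of the lemma hold with $\ell(w_2)=\ell(u_2)=1$, $u_2\neq w_2$, $r=s_3$, yet the unique left descent $s_1$ of $w$ sends $u$ to $s_1s_3\notin W^H$, so the recursion yields a factor $(q-1-x)$ rather than the $(q-1)$ your dichotomy (and a blind reading of case D$_0$) would produce. So your plan for D$_{-1}$--D$_1$ must treat the coatom configuration $\ell(u_2)=m_{s,t}-1$ separately (note that in the proof of Theorem~\ref{computa} the offending configuration with $\ell(w_2)=\ell(u_2)$ and $u_2\neq w_2$ is excluded on other grounds, so this is where the statement is actually consumed); restricting the dichotomy to the regime $\ell(w_2)-\ell(u_2)\geq 2$, where it is automatic, and arguing the remaining cases by the paper's explicit case analysis is the safe route.
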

\begin{proof}
In this proof, we use the Subword Property (Theorem~\ref{subword}), Property (\ref{0}), and the recursive formula of Theorem~\ref{7.1} several times without explicit mention.   

Note that we have $D_R(u_2)=\{r\}$ unless $u_2=e$; in particular, $u_2$ cannot be  the top element of $W_{\{s,t\}}$ (if any). If $u_2\neq e$, we let  $l\in {\{s,t\}}$ 
 be such that $\{l\}=D_L(u_2)$, so that  $u_2$ has a (unique) reduced expression starting with  $ l$ and ending with $r$. 

Since  $|\{x\in \{s,t\}:x\leq w_3\}|\leq 1$, the longest subword of type $s\textrm{-}t\textrm{-}s\textrm{-}t\textrm{-}\cdots$ or $t\textrm{-}s\textrm{-}t\textrm{-}s\textrm{-}\cdots$ of any reduced expression for $w$  has length  at most $ \ell(w_2) +1$, and hence  $\ell(w_2)-\ell (u_2) \geq -1$.

\bigskip 

Proof of D$_{-1}$. \quad Since $\ell(w_2)-\ell (u_2) = -1 $, necessarily $u_2= w_2 \cdot r$, as otherwise $u$ could not be smaller than or equal to $w$. We have 
$$ R^{H,x}_{u,w} (q)= R^{H,x}_{w_2ru_3,w_2w_3} (q)=
 R^{H,x}_{ru_3,w_3} (q) .$$

\medskip
Proof of D$_{0}$. \quad If  $u_2=w_2$, the assertion is immediate. If $u_2\neq w_2$, there exists an element $v\in W_{\{s,t\}}$, with $\ell(v)= \ell(w_2)-1=\ell(u_2)-1$, such that 
$u_2=v\cdot r$ and $w_2=\bar{l}\cdot v$.
We have 
$$ R^{H,x}_{u,w} (q)= R^{H,x}_{vru_3,\bar{l}vw_3} (q)=(q-1) R^{H,x}_{vru_3,vw_3} (q)+ q R^{H,x}_{\bar{l}vru_3,vw_3} (q)=(q-1) R^{H,x}_{ru_3,w_3} (q)+ q R^{H,x}_{\bar{l}vru_3,vw_3} (q).
$$
We cannot have  $\bar{l}\cdot v \cdot r \cdot u_3 \leq v \cdot w_3$, since  all subwords of any reduced expression of $ v \cdot w_3$ of type $s\textrm{-}t\textrm{-}s\textrm{-}t\textrm{-}\cdots$ or $t\textrm{-}s\textrm{-}t\textrm{-}s\textrm{-}\cdots$ have length at most $\ell(v)+1$ while $\ell(\bar{l}\cdot v \cdot r)=\ell(v)+2$. Hence $R^{H,x}_{\bar{l}vru_3,vw_3} (q)=0$, as desired.

\medskip
Proof of D$_{1}$.  \quad Since $\ell(w_2)-\ell (u_2) = 1 $, we have either 
\begin{enumerate}
\item $w_2=u_2 \cdot \bar{r}$, or 
\item $w_2= \bar{l} \cdot u_2 \neq u_2 \cdot \bar{r}$ and $u_2\neq e$, or 
\item $w_2=r$ and $u_2=e$. 
\end{enumerate}
In the first case, we have
$$ R^{H,x}_{u,w} (q)=  R^{H,x}_{u_2u_3,u_2 \bar{r} w_3} (q)= R^{H,x}_{u_3, \bar{r} w_3} (q)=
(q-1-x)   R^{H,x}_{u_3,w_3} (q) .$$
In the second case, we have
\begin{eqnarray*}
  R^{H,x}_{u,w} (q) &=&  R^{H,x}_{u_2u_3,\bar{l} u_2  w_3} (q)=(q-1)R^{H,x}_{u_2u_3, u_2  w_3} (q)+ q R^{H,x}_{\bar{l} u_2u_3, u_2  w_3} (q)=(q-1)R^{H,x}_{u_3,  w_3} (q)
\end{eqnarray*}
since $\bar{l} \cdot u_2 \cdot u_3 \not\leq  u_2 \cdot w_3$. In the third case, the assertion is immediate.

\medskip
Proof of D$_{2}$. \quad
Since $\ell(w_2)-\ell (u_2) = 2 $, we have either 
\begin{itemize}
\item $w_2=u_2 \cdot \bar{r}  \cdot r $, or 
\item $w_2= \bar{l}  \cdot u_2  \cdot \bar{r}\neq  u_2  \cdot \bar{r}  \cdot r$ (where we set $\bar{l}=r$ if $u_2=e$). 
\end{itemize}
In the first case, we have
\begin{eqnarray*}
 R^{H,x}_{u,w} (q)&=& R^{H,x}_{u_2u_3,u_2 \bar{r} r w_3} (q)=R^{H,x}_{u_3, \bar{r} r w_3} (q)= (q-1-x)R^{H,x}_{u_3,  r w_3} (q)\\
&=& (q-1-x) [(q-1)   R^{H,x}_{u_3,w_3} (q)+ q R^{H,x}_{ru_3,w_3} (q) ].
\end{eqnarray*}
In the second case, we have
\begin{eqnarray*}
 R^{H,x}_{u,w} (q)&=& R^{H,x}_{u_2u_3, \bar{l}u_2 \bar{r} w_3} (q)=(q-1)R^{H,x}_{u_2u_3, u_2 \bar{r} w_3} (q)+q R^{H,x}_{\bar{l}u_2u_3, u_2 \bar{r} w_3} (q)\\
&=& (q-1)(q-1-x)R^{H,x}_{u_3,  w_3} (q)+q(q-1) R^{H,x}_{ru_3, w_3} (q),
\end{eqnarray*}
where the last equality follows from Statements D$_{1}$ and D$_{0}$.

\medskip
Proof of D$_{i}$. \quad
If $\ell(w_2)-\ell (u_2) = 3 $, we have either 
\begin{itemize}
\item $w_2=u_2 \cdot  \bar{r}  \cdot r  \cdot \bar{r}$, or 
\item $w_2= \bar{l}  \cdot u_2  \cdot \bar{r}  \cdot r\neq u_2  \cdot \bar{r} \cdot  r  \cdot \bar{r}$ (where we set $\bar{l}=r$ if $u_2=e$).
\end{itemize}
In the first case, we have
\begin{eqnarray*}
 R^{H,x}_{u,w} (q)&=& R^{H,x}_{u_2u_3,u_2 \bar{r} r \bar{r}w_3} (q)=R^{H,x}_{u_3, \bar{r} r \bar{r}w_3} (q)= (q-1-x)R^{H,x}_{u_3,  r\bar{r} w_3} (q)\\
&=&(q-1-x) (q-1) [(q-1-x)   R^{H,x}_{u_3, w_3} (q)+ q R^{H,x}_{ru_3, w_3} (q) ],
\end{eqnarray*}
by Statement D$_{2}$.
In the second case, we have
\begin{eqnarray*}
 R^{H,x}_{u,w} (q)&=& R^{H,x}_{u_2u_3, \bar{l}u_2 \bar{r} pw_3} (q)=(q-1)R^{H,x}_{u_2u_3, u_2 \bar{r} r w_3} (q)+q R^{H,x}_{\bar{l}u_2u_3, u_2 \bar{r} r w_3} (q)\\
&=& (q-1)(q-1-x)[(q-1)R^{H,x}_{u_3,  w_3} (q)+qR^{H,x}_{ru_3,  w_3} (q)]+
q(q-1)R^{H,x}_{u_3,  w_3} (q)\\
&=& (q-1)[(q-1)(q-1-x)+q]R^{H,x}_{u_3,  w_3} (q)+ q(q-1)(q-1-x)R^{H,x}_{ru_3,  w_3} (q)\\
&=&(q-1)(q-1-x)^2R^{H,x}_{u_3,  w_3} (q)+ q(q-1)(q-1-x)R^{H,x}_{ru_3,  w_3} (q)
\end{eqnarray*}
by Statements D$_{2}$ and D$_{1}$, and by Eq. (\ref{1}).

If $\ell(w_2)-\ell (u_2) = 4 $, we have either
 \begin{itemize}
\item $w_2=u_2  \cdot \bar{r}  \cdot r  \cdot \bar{r}  \cdot r$, or 
\item $w_2= \bar{l} \cdot  u_2  \cdot \bar{r}  \cdot r \cdot  \bar{r}\neq u_2  \cdot \bar{r}  \cdot r  \cdot \bar{r} \cdot  r$  (where we set $\bar{l}=r$ if $u_2=e$).
\end{itemize}
In the first case, we have
\begin{eqnarray*}
 R^{H,x}_{u,w} (q)&=& R^{H,x}_{u_2u_3,u_2 \bar{r} r \bar{r}rw_3} (q)=R^{H,x}_{u_3, \bar{r} r \bar{r} rw_3} (q)\\
&=& (q-1-x)R^{H,x}_{u_3,  r\bar{r}r w_3} (q)=(q-1)(q-1-x)^2 [ (q-1-x)   R^{H,x}_{u_3, w_3} (q)+ \\
&&  q R^{H,x}_{ru_3, w_3} (q) ].
\end{eqnarray*} 
In the second case, we have
\begin{eqnarray*}
 R^{H,x}_{u,w} (q)&=& R^{H,x}_{u_2u_3, \bar{l}u_2 \bar{r} r \bar{r}w_3} (q)=(q-1)R^{H,x}_{u_2u_3, u_2 \bar{r} r \bar{r}w_3} (q)+q R^{H,x}_{\bar{l}u_2u_3, u_2 \bar{r} r \bar{r}w_3} (q)\\
&=& (q-1)^2 (q-1-x)[ (q-1-x)   R^{H,x}_{u_3, w_3} (q)+ q R^{H,x}_{ru_3, w_3} (q)]+ \\
&&q(q-1) [(q-1-x)   R^{H,x}_{u_3, w_3} (q)+ q R^{H,x}_{ru_3, w_3} (q)]\\
&=&(q-1)(q-1-x)[(q-1) (q-1-x)+q]  R^{H,x}_{u_3, w_3} (q)+ \\
&&q (q-1)[ (q-1)(q-1-x)+q]R^{H,x}_{ru_3, w_3} (q)\\
&=&(q-1)(q-1-x)^3  R^{H,x}_{u_3, w_3} (q)+ q (q-1)(q-1-x)^2R^{H,x}_{ru_3, w_3} (q)
\end{eqnarray*}
where the last equality follows by Eq. (\ref{1}). 
In both cases we have used  statements that we have already proved.

Suppose $\ell(w_2)-\ell (u_2) \geq 5$ and use induction on $\ell(w_2)$.
The base of the induction is $u_2=e$ and $w_2\in \{r \bar{r} r \bar{r}r,  \bar{r}r \bar{r}r \bar{r}\}$:  the assertion follows by a direct computation that we omit.

Let $\ell(w_2)>5$: if $l\in D_L(w_2)$, then $ R^{H,x}_{u,w} (q)= R^{H,x}_{u_2u_3,w_2w_3} (q)=R^{H,x}_{lu_2u_3,lw_2w_3} (q)$ and we may conclude by the induction hypothesis since  $\ell(lw_2)< \ell(w_2)$ and   $\ell(w_2) - \ell(u_2)=\ell(lw_2) - \ell(lu_2)$. 
If $l\not \in D_L(w_2)$,  then consider $\bar{l}\in D_L(w_2)$:  we have  $\bar{l} \notin D_L(u)$ since $\bar{l}\notin D_L(u_2)$, and $\bar{l}u\in W^H$  since $u_2\neq e$. Hence, using the induction hypothesis, we have
\begin{eqnarray*}
 R^{H,x}_{u,w} (q)&=& (q-1)R^{H,x}_{u_2u_3, \bar{l}w_2 w_3} (q) + q R^{H,x}_{\bar{l} u_2u_3, \bar{l}w_2 w_3} (q)\\
&=&(q-1)^2 (q-1-x) ^{\ell(\bar{l}w_2) - \ell(u_2)-2} 
[(q-1-x)   R^{H,x}_{u_3,w_3} (q)+ q    R^{H,x}_{ru_3,w_3} (q)  ]  \\
& &
+
q  (q-1) (q-1-x) ^{\ell(\bar{l}w_2) - \ell(\bar{l}u_2)-2}
[ (q-1-x)   R^{H,x}_{u_3,w_3} (q)+ q R^{H,x}_{ru_3,w_3} (q)  ]\\
&=&
(q-1) (q-1-x) ^{\ell(w_2) - \ell(u_2)-3}[ (q-1) (q-1-x) +q]
  R^{H,x}_{u_3,w_3} (q)
+\\
&&
q (q-1) (q-1-x) ^{\ell(w_2) - \ell(u_2)-4}[(q-1) (q-1-x) +q] 
 R^{H,x}_{ru_3,w_3} (q) \\
&=&
(q-1) (q-1-x) ^{\ell(w_2) - \ell(u_2)-2}
[ (q-1-x)
  R^{H,x}_{u_3,w_3} (q)
+
q R^{H,x}_{ru_3,w_3} (q) ]
\end{eqnarray*}
where the last equality follows by Eq. (\ref{1}). 
\end{proof}

\begin{rem}
\label{sololunghezza}
Lemma~\ref{protutti} implies that, under its hypotheses, $R^{H,x}_{u,w}$ is a combination of $R^{H,x}_{u_3,w_3}$  and $R^{H,x}_{pu_3,w_3}$ with coefficients in $\mathbb Z[q]$. Furthermore, if either
\begin{itemize}
\item $\ell(w_2)-\ell(u_2) \geq 3$, or
\item $\ell(w_2)-\ell(u_2) = 2$ and  $\{x\in \{ s,t\} : x\leq w_3\}=\emptyset$,
\end{itemize}
then the  coefficients of the combination depend only on 
$\ell(w_2)-\ell(u_2) $.
\end{rem}

\section{Main result}
\label{main}
In this section, we prove Theorem~\ref{computa}, whose implications were discussed in Section~1. In particular, for any arbitrary Coxeter system $(W,S)$,  any arbitrary subset $H\subseteq S$, and any arbitrary element  $w\in W^H$, we give an algorithm  for computing the parabolic Kazhan--Lusztig $R$-polynomials $\{R_{u,w}^{H,x}(q)\}_{u\in W^H}$ once one knows the poset-isomorphism class  of the interval $[e,w]$, and which  elements of the interval $[e,w]$ belong to $W^H$.

As an immediate corollary, we have that it is possible  to compute also the parabolic Kazhdan--Lusztig polynomials $\{P_{u,w}^{H,x}(q)\}_{u\in W^H}$ from the knowledge only of the poset-isomorphism class of the interval $[e,w]$ and which  elements of the interval $[e,w]$ belong to $W^H$ (see Remark~\ref{senso}).

First, we give the following general definition.
\begin{defi}
Let $P$ be a poset and $T\subseteq P$ be a subposet of $P$. A \emph{relative special matching of $P$ with respect to $T$} is a special matching $M$ of $P$ such that, if $p\in T$ and $M(p)\lhd p$, then $M(p) \in T$.  
\end{defi}

Now, fix    an arbitrary Coxeter system $(W,S)$, a subset $H \subseteq S$, and an element $w \in W^H$. An {\em $H$-special matching of $w$} is a relative special matching of $[e,w]$ with respect to $[e,w]^H$, that is a special matching $M$ of $w$ such that, if $u \leq w$, $u \in W^H$, and $M(u) \lhd u $, then $M(u) \in W^H.$

Note that the $\emptyset$-special matchings are exactly the special matchings and that all left multiplication matchings  are $H$-special,
for all $H \subseteq S$.

We say that an $H$-special matching $M$ of $w$ \emph{calculates} the parabolic Kazhdan--Lusztig $R$-polynomials (or, simply,  is \emph{calculating}) provided 
\begin{equation}
 R_{u,w}^{H,x} (q)= \left\{ \begin{array}{ll}
R_{M(u),M(w)}^{H,x}(q), & \mbox{if $M(u)  \lhd u$,} \\
(q-1)R_{u,M(w)}^{H,x}(q)+qR_{M(u),M(w)}^{H,x}(q), & \mbox{if $M(u) \rhd u$
and $M(u) \in W^{H}$,} \\
(q-1-x)R_{u,M(w)}^{H,x}(q), & \mbox{if $M(u) \rhd u$ and  $M(u) \notin W^{H}$.} 
\end{array} \right. 
\end{equation}
 for all $u \in W^H$ with $u \leq w$.
Clearly, all left multiplication matchings are calculating. 
Actually, our target is to prove that all $H$-special matchings are calculating.

We need the following result (see \cite[Theorem 4.2]{Mtrans}).
\begin{thm}
\label{orbite}
Let $M$ be an $H$-special matching of $w$. 
If
\begin{itemize}
\item every $H$-special matching of $v$ is calculating, for all  \( v \in W ^H\) with $v < w$, and
\item there exists a calculating special matching $N$ of $w$  commuting with $M$ and such that $M(w) \neq N(w)$,
\end{itemize}
then $M$  is calculating.
\end{thm}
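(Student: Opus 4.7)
The plan is to verify, for each $u\in W^H$ with $u\leq w$, the recursion predicted by $M$ by computing $R^{H,x}_{u,w}(q)$ twice: once starting with $N$ and once starting from the formula we wish to establish for $M$, then reducing both to $\mathbb Z[q]$-combinations of $R$-polynomials indexed by pairs with second argument $MN(w)=NM(w)$.

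First I would set up the restrictions. Since $M,N$ are commuting special matchings of $w$ with $M(w)\neq N(w)$, a standard ``commuting square'' argument (of the same flavor as Lemma~\ref{commutano}) shows that $M$ restricts to a special matching of $[e,N(w)]$ and, symmetrically, $N$ restricts to one of $[e,M(w)]$; the two restrictions still commute, and each is $H$-special because $H$-speciality is inherited downward from $[e,w]$. By the first hypothesis these restrictions are calculating, and by iterating we may freely use the calculating formulas for $M$ on $[e,N(w)]$ and for $N$ on $[e,M(w)]$.

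Next, fix $u\in W^H$ with $u\leq w$ and apply the calculating formula for $N$ at $(u,w)$; depending on whether $N(u)\lhd u$, or $N(u)\rhd u$ with $N(u)\in W^H$, or $N(u)\rhd u$ with $N(u)\notin W^H$, this expresses $R^{H,x}_{u,w}(q)$ as a specific $\mathbb Z[q]$-combination of $R^{H,x}_{u,N(w)}(q)$ and $R^{H,x}_{N(u),N(w)}(q)$. Then inside $[e,N(w)]$, expand each of these via the calculating formula for $M$ at the pair $(u,N(w))$ or $(N(u),N(w))$; using $M(N(z))=N(M(z))$ for $z\in[e,w]$, every term rewrites as an $R$-polynomial with target $M(N(w))=N(M(w))$ and source one of $u$, $M(u)$, $N(u)$, $NM(u)$. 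Now carry out the mirror computation: take the desired formula for $M$ at $(u,w)$ (a combination of $R^{H,x}_{u,M(w)}(q)$ and $R^{H,x}_{M(u),M(w)}(q)$) and expand each term via the calculating restriction of $N$ on $[e,M(w)]$. The output is another $\mathbb Z[q]$-combination of $R$-polynomials in the \emph{same} four sources, with target $NM(w)$.

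Finally, compare coefficients. This reduces to a case analysis based on whether each of $M(u)$, $N(u)$, $NM(u)=MN(u)$ covers or is covered by its ``partner,'' and whether it lies in $W^H$; in each resulting subcase the match is a single polynomial identity. The step that cannot be made symmetric is the one in which $M(u)\rhd u$ and $N(u)\rhd u$ but exactly one of $M(u), N(u)$ lies outside $W^H$: here the equality of the two computations is exactly the identity $(q-1)(q-1-x)+q=(q-1-x)^2$ for $x\in\{q,-1\}$, recorded as Eq.~(\ref{1}). The main obstacle is therefore not conceptual but the bookkeeping: $H$-speciality may fail asymmetrically (so $M(u)\in W^H$ while $N(u)\notin W^H$, or vice versa), and one must verify that the restriction/commutation arguments still put both computations into the same four-source framework before invoking Eq.~(\ref{1}) to close each subcase.
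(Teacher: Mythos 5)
The paper does not actually prove this statement: it is imported from \cite[Theorem~4.2]{Mtrans}, so your proposal has to stand on its own, and its architecture is indeed the standard one used there: expand $R^{H,x}_{u,w}$ via the calculating matching $N$, then via the restriction of $M$ to $[e,N(w)]$ (legitimate because $MN(w)=NM(w)\lhd N(w)$, the restriction is $H$-special, and $N(w)\in W^H$ with $N(w)<w$, so the first hypothesis applies), and compare with the conjectural $M$-formula at $(u,w)$ expanded via the restriction of $N$ to $[e,M(w)]$; both sides become $\mathbb Z[q]$-combinations of $R^{H,x}_{z,MN(w)}$ with $z\in\{u,M(u),N(u),MN(u)\}$. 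Up to the routine use of the lifting property for special matchings (needed to see that each expansion is applied to a comparable pair, and that vanishing terms vanish on both sides simultaneously), this setup is correct.

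The genuine gap is in your final step. It is not true that ``in each resulting subcase the match is a single polynomial identity,'' nor is Eq.~(\ref{1}) the crux. Take the configuration $N(u)\lhd u\lhd M(u)$ with $M(u)\notin W^H$. The route through $N$ then $M$ gives $R^{H,x}_{N(u),N(w)}$, which equals $(q-1-x)R^{H,x}_{N(u),MN(w)}$ if $MN(u)\notin W^H$ but $(q-1)R^{H,x}_{N(u),MN(w)}+qR^{H,x}_{MN(u),MN(w)}$ if $MN(u)\in W^H$, whereas the desired formula expanded through $N$ gives $(q-1-x)R^{H,x}_{N(u),MN(w)}$; no polynomial identity reconciles the second branch, so you must \emph{prove} $MN(u)\notin W^H$, and $H$-speciality of $M$ and $N$ alone cannot do it (it constrains only matched pairs whose upper element lies in $W^H$). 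The missing ingredient is the fact that an element outside $W^H$ has at most one coatom in $W^H$ (\cite[Lemma~4.1]{Mtrans}, quoted in this paper inside the proof of Proposition~\ref{partizione}): $M(u)\notin W^H$ has the two distinct coatoms $u\in W^H$ and $MN(u)$, forcing $MN(u)\notin W^H$. The mirror case $M(u)\lhd u\lhd N(u)$ with $N(u)\notin W^H$ needs the same fact, and so does the case $u\lhd M(u)$, $u\lhd N(u)$ with both in $W^H$, where one must force $MN(u)\in W^H$, since otherwise the two expansions differ by $qx\bigl(R^{H,x}_{M(u),MN(w)}-R^{H,x}_{N(u),MN(w)}\bigr)$. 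By contrast, the subcase you single out as the asymmetric crux (both $M(u),N(u)$ covering $u$ with exactly one of them outside $W^H$) closes termwise with no appeal to Eq.~(\ref{1}) once $H$-speciality of the two matchings pins down $MN(u)\notin W^H$; Eq.~(\ref{1}) is where $x\in\{-1,q\}$ is needed elsewhere in the paper (Lemma~\ref{protutti} and the proof of Theorem~\ref{computa}), not in this theorem. So the plan is salvageable, but only after adding the coatom lemma (and the $H$-speciality of $N$, implicit in ``calculating'') to determine the $W^H$-membership of the fourth orbit element; as written, the coefficient comparison fails in exactly those subcases.
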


We also need the following easy lemma.
\begin{lem}
\label{nicola}
Let $s,t\in S$, $g_{st}\in W_{\{s,t\}}$, $p\in  D_R(g_{st})$, and $M$ be a special matching of the dihedral interval $[e,g_{st}]$. If
\begin{itemize}
\item $M$ commutes with $\rho_p$, and
\item $M(x)\neq \rho_p(x)$, for all $x\in W_{\{s,t\}}$ such that $\ell(x)\neq 0,1$ and, if $W_{\{s,t\}}$ is finite, $\ell(x)\neq m_{s,t}-1, m_{s,t}$,
\end{itemize}
then $M$ is a left multiplication matching.
\end{lem}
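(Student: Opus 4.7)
The plan is to exploit the very restricted structure of the dihedral interval $[e,g_{st}]$. Set $n=\ell(g_{st})$ and $m=m_{s,t}$, and denote by $a_k$ (resp.\ $b_k$) the length-$k$ element of $W_{\{s,t\}}$ whose reduced expression starts with $s$ (resp.\ $t$), with $a_0=b_0=e$. Then $[e,g_{st}]$ consists of $e$, the pair $\{a_k,b_k\}$ at each level $1\le k\le n-1$, and one or two elements at level $n$ according to whether $g_{st}$ is the longest element of $W_{\{s,t\}}$. Since $M(e)$ must be an atom of the interval, $M(e)\in\{s,t\}$; by the symmetry exchanging $s$ and $t$ I may assume $M(e)=s$, with the goal of proving $M=\lambda_s$.

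The heart of the argument is an induction on $j\ge 0$ showing $M(b_j)=a_{j+1}$ whenever both elements belong to $[e,g_{st}]$; this pairing is exactly $\lambda_s$. The base $j=0$ is the assumption. For the inductive step, the pairings fixed so far use up every element at levels $\le j-1$, so $M(b_j)$, being a Hasse-neighbour of $b_j$, must sit at level $j+1$; thus $M(b_j)\in\{a_{j+1},b_{j+1}\}\cap[e,g_{st}]$. To rule out $M(b_j)=b_{j+1}$, I split on whether $\rho_p(b_j)=b_{j-1}$ or $\rho_p(b_j)=b_{j+1}$ (a matter of $p$ and the parity of $j$). In the first case, applying the commutation $M\rho_p=\rho_p M$ to $b_{j-1}$ yields $M(b_j)=\rho_p(a_j)$, and a quick parity check using $D_R(a_j)\in\{\{s\},\{t\}\}$ shows $\rho_p(a_j)=a_{j+1}$. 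In the second case, $M(b_j)=b_{j+1}$ would amount to $M$ agreeing with $\rho_p$ on the pair $\{b_j,b_{j+1}\}$; since at least one of these two elements lies at a level in $\{2,\dots,m-2\}$, the middle-level hypothesis forbids it and leaves $M(b_j)=a_{j+1}$.

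I expect the most delicate point to be the behaviour at the top of the interval and its interaction with the case $s\notin D_L(g_{st})$, where $\lambda_s$ fails even to be a matching of $[e,g_{st}]$. Concretely, if $g_{st}=b_n$ then $a_n\notin[e,g_{st}]$, and once the induction reaches $j=n-1$ the only Hasse-neighbour of $b_{n-1}$ available in the interval is $b_n$; this forces $M(b_{n-1})=b_n$, which equals $\rho_p(b_{n-1})$ and therefore violates the middle-level hypothesis (the element $b_{n-1}$ being then at a middle level of $W_{\{s,t\}}$). The resulting contradiction shows that the assumption $M(e)=s$ was wrong, so $M(e)=t$ and the symmetric argument yields $M=\lambda_t$. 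In the remaining cases ($g_{st}=a_n$, or $g_{st}$ equal to the longest element of $W_{\{s,t\}}$), $a_n$ (resp.\ $w_0=a_m=b_m$) does belong to the interval, the inductive pairing $M(b_{n-1})=a_n$ closes the induction cleanly, and $M=\lambda_s$ is established on all of $[e,g_{st}]$.
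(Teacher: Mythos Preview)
Your approach is essentially the paper's: fix $M(e)=s$, walk up the two chains $a_k,b_k$ of the dihedral interval, and at each step use either commutation with $\rho_p$ or the middle-level hypothesis to force $M(b_j)=a_{j+1}$. The paper packages this as a minimal-counterexample argument rather than a forward induction, but the two are equivalent and invoke the same two ingredients at the same places.

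There is one small imprecision in your endgame. In the case $g_{st}=b_n$ you force $M(b_{n-1})=b_n=\rho_p(b_{n-1})$ and then assert that $b_{n-1}$ is at a middle level; but for $n=2$ the element $b_{n-1}=t$ has length $1$, which is never a middle level. The rescue is exactly your Case~2 device: at least one of the pair $\{b_{n-1},b_n\}$ has length in $\{2,\dots,m_{s,t}-2\}$ (namely $b_n$, of length $2$, once $m_{s,t}\ge 4$), and the hypothesis applied to that element gives the contradiction. The paper avoids singling out this endgame altogether: its minimal-counterexample argument just observes that the offending $x$ and $M(x)$ cannot both have length in $\{0,1,m_{s,t}-1,m_{s,t}\}$, which covers the top of the interval uniformly (including the situation $s\notin D_L(g_{st})$) without a separate case. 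Your ``WLOG $M(e)=s$, and if that leads to a contradiction then $M(e)=t$'' is logically fine but reads a bit awkwardly; dropping the WLOG and simply proving $M=\lambda_{M(e)}$ would be cleaner.
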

\begin{proof}
Without loss of generality, we suppose  $M(e)=s$. 
We need to show $M(x)=sx$, for all $x\in [e,g_{st}]$. 
By contradiction, let $x$ be minimal such that $M(x)\neq sx$. 

Clearly $x \notin \{ e,s\}$. By minimality,  $x \lhd M(x)$, and $s \notin D_L(x)$ as otherwise $M(sx)$ would be $x$ since $sx$ would be smaller than $x$. 
Moreover, if $W_{\{s,t\}}$ is finite and $g_{st}$ is its  longest element $w_0$, then $x \notin \{w_0,sw_0\}$.  
The element $M(x)$ cannot be $xp$ (by hypothesis, since at least one among $x$ and $M(x)$ has length not in $\{0,1, m_{s,t}-1, m_{s,t}\}$); $M(x)$ cannot be  $sx$ (by assumption); $M(x)$ cannot be $tx$ (since $tx\lhd x$). The only possibility left is  $M(x)=x\bar{p}$ (recall that  $\bar{p}$ is the element in $\{s,t\}\setminus \{p\}$ and notice that, if $W_{\{s,t\}}$ is finite,  $x\bar{p}$ is not $ w_0$  since otherwise $w_0$ would be equal to $sx$). Hence the element $\rho_p M \rho_p (xp)$, which is $x\bar{p}p$, would have length equal to $\ell(xp)+3$, and so $M \rho_p (xp) \neq  \rho_p M (xp)$, which contradicts the fact that $M$ commutes with $\rho_p$.
\end{proof}

We now recall and prove Theorem~\ref{computa}.
\begin{thm*}
\label{tutti}
Given an arbitrary Coxeter system $(W,S)$ and a subset  $H \subset S$, let $w$ be any element in $W^H$.
 Then all $H$-special matchings of $w$ calculate the parabolic Kazhdan--Lusztig $R$-polynomials of $W^H$.
\end{thm*}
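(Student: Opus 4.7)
The plan is to argue by induction on $\ell(w)$; the base cases $\ell(w)\leq 1$ are immediate, since for $\ell(w)=1$ the only $H$-special matching of $w$ is the left multiplication $\lambda_{M(e)}$, which is calculating. For the inductive step, fix an $H$-special matching $M$ of $w$ and assume that every $H$-special matching of every $v\in W^{H}$ with $v<w$ is calculating. By Theorem~\ref{orbite} it suffices to exhibit a calculating $H$-special matching $N$ of $w$ that commutes with $M$ and satisfies $N(w)\neq M(w)$.

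The first attempt is to take $N=\lambda_{s'}$ for some $s'\in D_L(w)\setminus\{M(e)\}$: such $\lambda_{s'}$ is automatically $H$-special, calculating, and satisfies $\lambda_{s'}(w)=s'w\neq M(w)$. By Lemma~\ref{commutano} together with Lemma~\ref{5.3}, commutation of $\lambda_{s'}$ with $M$ reduces to verifying commutation on the unique lower dihedral interval $[e,w_0(M(e),s')]$, so the induction closes whenever such an $s'$ exists and the dihedral commutation holds. The main obstacle is the residual case where either $D_L(w)=\{M(e)\}$ or no $s'\in D_L(w)\setminus\{M(e)\}$ commutes with $M$ on the corresponding dihedral interval. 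In this situation I would invoke Theorem~\ref{caratteri} to realize $M$ as the matching associated with a right or left system $(J,s,t,M_{st})$ of $w$: axioms R1--R5 (resp.\ L1--L5) pin down the behaviour of $M$ on $[e,w_0(s,t)]$ and on the dihedral cosets of $W_{\{s,t\}}$. Using Lemma~\ref{nicola} to rule out unwanted agreements with a right multiplication, one either constructs an alternative calculating $H$-special matching $N$ from a judiciously chosen auxiliary system that commutes with $M$ and differs from $M$ at $w$, or one is reduced to a rigid configuration in which the recurrence must be verified directly.

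In that remaining rigid configuration, the recurrence~\eqref{calcola} is checked for $M$ by hand. Each $u\leq w$ is factored along $W_{\{s,t\}}$ as $u=u_2\cdot u_3$ (and similarly $w=w_2\cdot w_3$) following Proposition~\ref{fattorizzo}, and the intersection $(W_{\{s,t\}}\cdot u_3)\cap W^{H}$ is classified by Proposition~\ref{partizione}. According as this intersection is empty, a singleton, a chain, or the whole coset, Lemmas~\ref{sesingleton}, \ref{sepiena}, and \ref{protutti} express both $R^{H,x}_{u,w}(q)$ and $R^{H,x}_{M(u),M(w)}(q)$ as the same $\mathbb{Z}[q]$-linear combinations of parabolic $R$-polynomials of strictly smaller length-differences, which are controlled by the inductive hypothesis applied to $M(w)<w$. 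The identity $(q-1)(q-1-x)+q=(q-1-x)^2$, valid precisely for $x\in\{q,-1\}$, is what collapses these combinations into the three branches of~\eqref{calcola}, to be matched against the cases ``$M(u)\lhd u$'', ``$M(u)\rhd u$ with $M(u)\in W^{H}$'', and ``$M(u)\rhd u$ with $M(u)\notin W^{H}$''. Orchestrating this case analysis so that it covers every configuration left uncovered by the Theorem~\ref{orbite} reduction — in particular handling the subtle interaction between $M_{st}$ on $[e,w_0(s,t)]$ and the $W^{H}$-status of the various cosets $W_{\{s,t\}}\cdot u_3$ — is the principal technical hurdle I expect to encounter.
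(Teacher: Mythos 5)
Your outline does follow the same architecture as the paper (induction on $\ell(w)$, reduction via Theorem~\ref{orbite} to a commuting left multiplication, Theorem~\ref{caratteri} and systems, the $W_{\{s,t\}}$-factorization, Proposition~\ref{partizione}, and Lemmas~\ref{sesingleton}, \ref{sepiena}, \ref{protutti}, \ref{nicola} together with Eq.~(\ref{1})), but as it stands it has genuine gaps. First, the one precise claim you make in the reduction step is false: for $s'\in D_L(w)\setminus\{M(e)\}$ it is not automatic that $\lambda_{s'}(w)\neq M(w)$. For instance, in type $A_2$ with $w=s_1s_2s_1$, the special matching $M=\rho_{s_1}$ has $M(e)=s_1$ and $M(w)=s_1s_2=\lambda_{s_2}(w)$. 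The actual reduction must go through the system: one first writes $M$ as the matching of a right (or left) system $(J,s,t,M_{st})$, and only when $(w^{J})^{\{s,t\}}\neq e$ (resp.\ $({}_{J}w)^{\{s\}}\neq e$) does one choose $l\in D_L\big((w^{J})^{\{s,t\}}\big)$ --- possibly $l=M(e)$ itself, a case your dichotomy excludes --- proving $M(w)\neq\lambda_l(w)$ from the length-additive factorization of $M(w)$ and the commutation from Properties R3--R4 (resp.\ L4--L5) via Lemma~\ref{commutano}. Your residual case (``$D_L(w)=\{M(e)\}$ or no commuting $s'$'') is therefore not the relevant one, and the suggested fallback of building ``an alternative calculating $H$-special matching from an auxiliary system'' is unsupported and is not what is needed.

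Second, and more seriously, the remaining case $(w^{J})^{\{s,t\}}=e$ (or $({}_{J}w)^{\{s\}}=e$) is not a small ``rigid configuration'': it is where essentially all the work of the theorem lies, and your proposal explicitly defers it. Verifying (\ref{calcola}) there requires the case-by-case analysis over $\ell(w_2)-\ell(u_2)\in\{-1,0,1,2,3,\geq 4\}$, separately for each shape of $(W_{\{s,t\}}\cdot u_3)\cap W^H$ allowed by Proposition~\ref{partizione}, and the delicate points are exactly those your sketch omits: one must exploit the $H$-specialness of $M$ and its commutation with $\rho_p$ on $[e,w_0(s,t)]$ (coming from R5/L4), together with Lemma~\ref{nicola}, to rule out configurations such as $r\cdot u_3\leq w_3$ or $M(w_2)=M(u_2)\cdot p$, and to exclude shapes of $w_2$ for which $M$ would agree with a left multiplication at both $u$ and $w$ or would violate $H$-specialness. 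Saying that the lemmas ``express both sides as the same linear combination'' is precisely the statement to be proved, not a consequence of the lemmas alone (and note that in this direct verification the induction hypothesis plays no role; it is used only inside Theorem~\ref{orbite}). So the proposal identifies the correct toolkit but does not yet contain a proof.
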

\begin{proof}
We use induction on $\ell(w)$, the case $\ell(w)\leq 1$ being trivial. Suppose $\ell(w)> 1$.

 Let $M$ be an $H$-special matching of $w$ and $u \in W^H$, with $u \leq w$. We need to show 
\begin{equation*}
\label{ecco}
 R_{u,w}^{H,x} (q)= \left\{ \begin{array}{ll}
R_{M(u),M(w)}^{H,x}(q), & \mbox{if $M(u)  \lhd u$,} \\
(q-1)R_{u,M(w)}^{H,x}(q)+qR_{M(u),M(w)}^{H,x}(q), & \mbox{if $M(u) \rhd u$
and $M(u) \in W^{H}$,} \\
(q-1-x)R_{u,M(w)}^{H,x}(q), & \mbox{if $M(u) \rhd u$ and  $M(u) \notin W^{H}$.} 
\end{array} \right. 
\end{equation*}

We may suppose that $M$ 
does not agree with a left multiplication matchings on both $u$ and $w$, because otherwise the assertion is clear since left multiplication matchings are calculating.

If there exists a left multiplication matching $\lambda$ of $w$ commuting with $M$ such that $\lambda(w) \neq M(w)$, then we can conclude by Theorem~\ref{orbite}. 

By Theorem~\ref{caratteri}, $M$ is associated with a system  $(J,s,t,M_{st})$. 
Suppose first that $(J,s,t,M_{st})$ is a right system and $(w^{J})^{\{s,t\}} \neq e$. Fix $l\in D_L((w^{J})^{\{s,t\}})$; thus $l\in D_L(w)$ and $\lambda_l$ is a special matching of $w$ that satisfies  $M(w) \neq \lambda_l(w)$ since 
 $$M(w) =(w^{J})^{\{s,t\}}\, \cdot \, M_{st} \Big ((w^{J})_{\{s,t\}} \, \cdot \, _{\{s\}} (w_{J})\Big )\, \cdot \,
 ^{\{s\}}(w_{J}) $$
while
$$\lambda_l(w) = l  (w^{J})^{\{s,t\}}\, \cdot \,  (w^{J})_{\{s,t\}} \, \cdot \, _{\{s\}} (w_{J})\, \cdot \,
 ^{\{s\}}(w_{J}) .$$
We need to show that $M$ and $\lambda_l$ commute. In order to apply Lemma~\ref{commutano}, we distinguish the following cases.\\
(a) $l\notin \{s,t\}$.\\
By Property R3 of the definition of a right system, either $l \notin J$ or $l$ commutes with $s$. In the first case, $M$ acts as $\rho_s$ on  $[e,w_0(s,l)]$  and hence commutes with $\lambda_l$ on  $[e,w_0(s,l)]$. In the second case, $M$ and $\lambda_l$ clearly commutes on  $[e,w_0(s,l)]$ since  $[e,w_0(s,l)]$  is a dihedral interval with  4 elements.\\
(b) $l=t$.\\
By Property R4, $M$ commutes with $\lambda_t$ on  $[e,w_0(s,t)]$.\\
(c) $l=s$.\\
We need to show that $M$ and $\lambda_s$ commute on every lower dihedral intervals $[e,w_0(s,r)]$, with $r\in S\setminus \{s\}$. For $r=t$, it follows from  Property R4. For $r\neq t$, $M$ acts on $[e,w_0(s,r)]$ as $\rho_s$ or $\lambda_s$, and in both cases $M$ commutes with $\lambda_s$ on $[e,w_0(s,r)]$.

\bigskip

Now suppose  that $(J,s,t,M_{st})$ is a left system and  $(_{J}w)^{\{s\}} \neq e$.
Fix $l\in D_L((_{J}w)^{\{s\}})$; thus $l\in D_L(w)$ and $\lambda_l$ is a special matching of $w$ that satisfies  $M(w) \neq \lambda_l(w)$ since 
 $$M(w) = (_Jw)^{\{s\}} \, \cdot \, M_{st} \Big( \, (_Jw)_{\{s\}}  \,  \cdot  \,  _{\{s,t\}} (^J  w) \Big) \, 
\cdot \, ^{\{s,t\}}(^J w) $$
while
$$\lambda_l(w) = l   (_Jw)^{\{s\}} \, \cdot  \, (_Jw)_{\{s\}}  \,  \cdot  \,  _{\{s,t\}} (^J  w)  \, 
\cdot \, ^{\{s,t\}}(^J w).$$
In order to  show that $M$ and $\lambda_l$ commute, we again apply Lemma~\ref{commutano}. If $l\neq s$, then $M$ acts as $\rho_s$ on  $[e,w_0(s,l)]$  and hence commutes with $\lambda_l$. Suppose
  $l=s$; we need to show that $M$ and $\lambda_s$ commute on every lower dihedral intervals $[e,w_0(s,r)]$, with $r\in S\setminus \{s\}$. If $r=t$, it follows from  Property L5. If $r\neq t$, then $M$ acts on $[e,w_0(s,r)]$ as $\rho_s$ or $\lambda_s$, and so $M$ commutes with $\lambda_s$.

\bigskip

Hence we may suppose that either
\begin{enumerate}
\item $(J,s,t,M_{st})$ is a right system and $(w^{J})^{\{s,t\}} = e$, or
\item $(J,s,t,M_{st})$ is a left system and $(_{J}w)^{\{s\}} = e$.
\end{enumerate}
In the first case, we set $w_2=  (w^{J})_{\{s,t\}} \, \cdot \, _{\{s\}} (w_{J})$, $w_3= \,
 ^{\{s\}}(w_{J})$,  $u_2=  (u^{J})_{\{s,t\}} \, \cdot \, _{\{s\}} (u_{J})$, $u_3= \,
 ^{\{s\}}(u_{J})$. In the second case, we set $w_2=  (_{J}w)_{\{s\}} \, \cdot \, _{\{s,t\}} (^{J}w)$, $w_3= \,^{\{s,t\}}(^{J}w)$,  
 $u_2=  (_{J}u)_{\{s\}} \, \cdot \, _{\{s,t\}} (^{J}u)$, $u_3= \,^{\{s,t\}}(^{J}u)$. 
In both cases, we get the $W_{\{s,t\}}\times  \, ^{\{s,t\}}W$-factorization of $w$ and $u$: 
$$w= w_2 \cdot w_3 \quad \quad \quad u= u_2 \cdot u_3.$$
Note:
\begin{itemize}
\item $u\leq w$,
\item $w_2, u_2 \in W_{\{s,t\}}$,
\item $w_3, u_3 \in \,^{\{s,t\}}W$,
\item $u_3\leq w_3$,
\item $ |\{x\in \{s,t\}: x\leq w_3\}|\leq 1$ (in the first case, this is trivial since $t\notin J$ and $w_3\in W_J$; in the second case $s$ and $t$ cannot be both smaller than or equal to $w_3$ since otherwise $M_{st}=\lambda_s$  by Property L4 and hence $M=\lambda_s$) and, if this cardinality is 1, we let $p$ be such that  $\{p\}=\{x\in \{s,t\} :x\leq w_3\}$,
\item $M(w)=M(w_2)\cdot w_3$ and $M(u)=M(u_2)\cdot u_3$,
\item $M$ acts as $\lambda_s$ on $[e,w_0(s,r)]$, for all $r\in S\setminus \{t\}$ such that $r \leq w$, 
\item if  $\{p\}=\{x\in \{s,t\} :x\leq w_3\}$, then $M$ commutes with $\rho_p$ on $[e,w_0(s,t)]$ (by either Property~R5 or Property~L4).
\end{itemize}

Recall Proposition~\ref{partizione}. If $(W_{ \{s, t\} } \cdot u_3) \cap W^H=\{u_3\}$, then we may conclude  using Lemma~\ref{sesingleton}. 

We now suppose  $W_{ \{s, t\} } \cdot u_3 \subseteq W^H$ and  apply Lemma~\ref{sepiena}.
Suppose  $\ell(w_2)-\ell(u_2)= i $, $i\geq 2$. 
 By Lemma~\ref{sepiena}, we have
\begin{eqnarray*} 
R^{H,x}_{u,w} (q)  &= & R_i \cdot  R^{H,x}_{u_3,w_3} (q)  + qR_{i-1}  \cdot   R^{H,x}_{pu_3,w_3} (q).
\end{eqnarray*}

If $M(u)\lhd u$, then $\ell(M(w_2)) - \ell(M(u_2))=i$ and also $R^{H,x}_{M(u),M(w)} (q)$ is equal to  $ R_i \cdot  R^{H,x}_{u_3,w_3} (q)  + qR_{i-1}  \cdot   R^{H,x}_{pu_3,w_3}$, by  Lemma~\ref{sepiena}.

Suppose $u \lhd M(u) $. 
If  $\ell(w_2)-\ell(u_2) \geq 4$ then 
$$(q-1)  R^{H,x}_{u_2u_3,M(w_2)w_3} (q)+  q   R^{H,x}_{M(u_2)u_3,M(w_2)w_3} (q)$$ is equal to
\begin{eqnarray*}
 &=& (q-1)[R_{i-1} \cdot  R^{H,x}_{u_3,w_3} (q)  + qR_{i-2}  \cdot   R^{H,x}_{pu_3,w_3}] +
 q[R_{i-2} \cdot  R^{H,x}_{u_3,w_3} (q)  + qR_{i-3}  \cdot   R^{H,x}_{pu_3,w_3}] \\
 &=&[ (q-1)R_{i-1} + qR_{i-2}] \cdot  R^{H,x}_{u_3,w_3} (q) + q[ (q-1)R_{i-2} + qR_{i-3}] \cdot   R^{H,x}_{pu_3,w_3}\\
&=&  R_i \cdot  R^{H,x}_{u_3,w_3} (q)  + qR_{i-1}  \cdot   R^{H,x}_{pu_3,w_3}
\end{eqnarray*}
 by Lemmas~\ref{ricorsioni} and~\ref{sepiena}, as desired.

If  $\ell(w_2)-\ell(u_2) = 3$, then Lemma~\ref{sepiena} implies  that 
$$(q-1)  R^{H,x}_{u_2u_3,M(w_2)w_3} (q)+  q   R^{H,x}_{M(u_2)u_3,M(w_2)w_3} (q)$$ is equal to
\begin{eqnarray}
\label{chicche}
 && (q-1)[R_{2} \cdot  R^{H,x}_{u_3,w_3} (q)  + qR_{1}  \cdot   R^{H,x}_{pu_3,w_3}] +
 q[(q-1)   R^{H,x}_{u_3,w_3} (q)  + q     R^{H,x}_{pu_3,w_3} \cdot  \chi]
\end{eqnarray}
where $\chi=\begin{cases}
1&\text{if $M(w_2)=M(u_2)\cdot p$}\\
0 & \text{otherwise.}
\end{cases}$

The term $ R^{H,x}_{pu_3,w_3} \cdot  \chi$ is always $0$: indeed, if $p\leq w_3$, then $M(w_2)\neq M(u_2)\cdot p$ since, otherwise, $w_2=M(M(u_2)\cdot p) = M \circ \rho_p \, (M(u_2))\neq \rho_p \circ  M \, (M(u_2))=u_2p$ and $M$ would not commute with $\rho_p$ on $[e,w_0(s,t)]$. Hence the polynomial in (\ref{chicche}) is always equal to 

\begin{eqnarray*}
  &=&[ (q-1)R_{2} + q(q-1)]   R^{H,x}_{u_3,w_3} (q) +  q(q-1)R_{1} \cdot   R^{H,x}_{pu_3,w_3}\\
&=&  R_3 \cdot  R^{H,x}_{u_3,w_3} (q)  + qR_2  \cdot   R^{H,x}_{pu_3,w_3},
\end{eqnarray*}
as desired.

Suppose   $\ell(w_2)-\ell(u_2) = 2$. If $\{x\in \{s,t\}: x\leq w_3\}= \emptyset$,  then 
$$(q-1) R^{H,x}_{u_2u_3,M(w_2)w_3} (q)+  q   R^{H,x}_{M(u_2)u_3,M(w_2)w_3} (q)$$ is equal to $(q-1) ^2 R^{H,x}_{u_3,w_3} (q)$ and the assertion follows. Suppose  $\{x\in \{s,t\}: x\leq w_3\}= \{p\}$ and recall that, in this case, $M$ commutes with $\rho_p$ on $[e,w_0(s,t)]$:
in order to compute 
\begin{eqnarray}
\label{roma}
&&(q-1)  R^{H,x}_{u_2u_3,M(w_2)w_3} (q)+  q   R^{H,x}_{M(u_2)u_3,M(w_2)w_3} (q)
\end{eqnarray}
we distinguish two cases, according to as whether $p$ is in $D_R(M(u_2))$ or not.
If $p\in  D_R(M(u_2))$, then there exist $r\in \{s,t\}$ and $v\in W_{\{s,t\}}$ with $\ell(v)=\ell(u_2)$  such that $M(u_2)=v \cdot p$ and $M(w_2)= r \cdot v$. 
The polynomial in (\ref{roma}) is equal to 
 \begin{eqnarray*}
&=&(q-1)  R^{H,x}_{u_2u_3,r v w_3} (q)+  q   R^{H,x}_{vpu_3, rv w_3} (q)\\
& = & (q-1)^2 R^{H,x}_{u_3, w_3} (q) + q(q-1) R^{H,x}_{pu_3,  w_3} (q)
\end{eqnarray*}
since  Lemma~\ref{sepiena} implies
\begin{itemize}
\item $ R^{H,x}_{u_2u_3,r v w_3} (q)=(q-1)R^{H,x}_{u_3, w_3} (q)$ (notice  $r\cdot v \neq u_2p $),
\item $R^{H,x}_{vpu_3, rv w_3} (q)=(q-1) R^{H,x}_{pu_3,  w_3} (q)$  (notice  $vpu_3 \leq  rv w_3$ if and only if $pu_3 \leq w_3$).
\end{itemize}

If $p\notin  D_R(M(u_2))$, then $M(u_2) \cdot p = M(u_2 \cdot p)= w_2$ and $M(w_2)= u_2 \cdot p$, as otherwise $\rho_p \circ M (u_2) \neq M \circ \rho_p (u_2)$. Thus $M(u_2)\not\leq M(w_2) > M(w_2) p$ and 
$M(u_2) \cdot u_3 \not \leq M(w_2) \cdot w_3$, and so the polynomial in (\ref{roma}) is equal to 
 \begin{eqnarray*}
&=&(q-1)  R^{H,x}_{u_2u_3, u_2 p w_3} (q)=  (q-1)[(q-1) R^{H,x}_{u_3, w_3} (q) + q R^{H,x}_{pu_3,  w_3} (q)].
\end{eqnarray*}

\bigskip 
Suppose now  $\ell(w_2)-\ell(u_2) = 1$. By Lemma~\ref{sepiena}, we have 
$$ R^{H,x}_{u,w} (q)= 
\left\{
\begin{array}{ll}
(q-1)  R^{H,x}_{u_3,w_3} (q)+ q R^{H,x}_{pu_3,w_3} (q) ,  &     \text{if $w_2=u_2 \cdot p$}\\
(q-1)  R^{H,x}_{u_3,w_3} (q),  &   \text{otherwise.}
\end{array}
\right.
$$
If $M(u_2)\rhd u_2$, then $M(u_2)=w_2$, since $M$ is a special matching, and 
$$(q-1)  R^{H,x}_{u_2u_3,M(w_2)w_3} (q)+  q   R^{H,x}_{M(u_2)u_3,M(w_2)w_3} (q)$$
is equal to
 \begin{eqnarray*}
&=&(q-1)  R^{H,x}_{u_2u_3,u_2w_3} (q)+  q   R^{H,x}_{w_2u_3,u_2w_3} (q)\\
&=&(q-1)  R^{H,x}_{u_3,w_3} (q)+  q   R^{H,x}_{w_2u_3,u_2w_3} (q)
\end{eqnarray*}
where $ R^{H,x}_{w_2u_3,u_2w_3} (q)= 
\left\{
\begin{array}{ll}
 R^{H,x}_{pu_3,w_3} (q) ,  &     \text{if $w_2=u_2 \cdot p$}\\
0,  &   \text{otherwise.}
\end{array}
\right.
$

If $M(u_2)\lhd u_2$, then either $\{x\in\{s,t\}: x\leq w_3\}=\emptyset$, or $\{x \in \{s,t\} : x \leq w_3\}=\{p\}$ and $w_2=u_2\cdot p$ if and only if $M(w_2)=M(u_2) \cdot p$ since $M$ and $\rho_p$ commute. Hence $ R^{H,x}_{u,w} (q)=R^{H,x}_{M(u),M(w)} (q)$, by Lemma~\ref{sepiena}.

Suppose   $\ell(w_2)-\ell(u_2) = 0$. If $w_2=u_2$ the assertion is trivial. Otherwise, $u_2 \lhd M(u_2)=w_0(s,t)$,  $M(w_2)\lhd w_2$ and $M(w_2) \lhd u_2$. Since $u\leq w$, necessarily $p\leq w_3$, $p\in D_R(u_2)$,  $u_2= (lw_2)\cdot p$ where $l\in D_L(w_2)\setminus D_L(u_2)$, $w_0(s,t)=w_2\cdot p= l \cdot u_2$ and, since $M\circ \rho_p ( w_0(s,t))=\rho_p\circ M ( w_0(s,t))$, we have  $u_2=M(w_2) \cdot p$. Thus
$$(q-1)  R^{H,x}_{u_2u_3,M(w_2)w_3} (q)+  q   R^{H,x}_{M(u_2)u_3,M(w_2)w_3} (q)$$
is equal to
 \begin{eqnarray*}
&=&(q-1)  R^{H,x}_{M(w_2) p u_3,M(w_2)w_3} (q)= (q-1)  R^{H,x}_{p u_3,w_3} (q)
\end{eqnarray*}
since $M(u_2)u_3 \not \leq M(w_2)w_3$, and we conclude by  Lemma~\ref{sepiena}.

Suppose   $\ell(w_2)-\ell(u_2) = -1$. Thus $u_2=w_2 \cdot p$ as otherwise $u\not \leq v$, and $u_2= w_0(s,t)$. Hence $M(u_2)\lhd u_2$ and we conclude by  Lemma~\ref{sepiena}.

\bigskip

We are left with the case  when $(W_{ \{s, t\} } \cdot u_3) \cap W^H$ is a chain and we apply Lemma~\ref{protutti}. Let $r,\bar{r}\in \{s,t\}$ be such that $ru_3\in W^H$ and $\bar{r}u_3\notin W^H$.

\medskip
Suppose   $\ell(w_2)-\ell(u_2) \geq 4$. 
 By Lemma~\ref{protutti}, we have
\begin{eqnarray}
\label{pomeriggio}
\qquad 
R^{H,x}_{u,w} (q)  &= &  (q-1) (q-1-x) ^{\ell(w_2) - \ell(u_2)-2}   [(q-1-x) R^{H,x}_{u_3,w_3} (q)  +  q    R^{H,x}_{ru_3,w_3} (q)].
\end{eqnarray}

If $M(u)\lhd u$, then we can conclude since $\ell(M(w_2)) - \ell(M(u_2))=\ell(w_2) - \ell (u_2)$  (see Remark~\ref{sololunghezza}).  

If $u \lhd M(u) \notin W^H$, then 
\begin{eqnarray*}
R_{u,M(w)}^{H,x}(q)&=& (q-1) (q-1-x) ^{\ell(M(w_2)) - \ell(u_2)-2}
 [(q-1-x)  R^{H,x}_{u_3,w_3} (q)+ q R^{H,x}_{ru_3,w_3} (q)  ]\\
&=& (q-1) (q-1-x) ^{\ell(w_2) - \ell(u_2)-3}   [(q-1-x) R^{H,x}_{u_3,w_3} (q)  +  q     R^{H,x}_{ru_3,w_3} (q)]
\end{eqnarray*}
 by Lemma~\ref{protutti}. Hence $(q-1-x)R_{u,M(w)}^{H,x}(q)$ is equal to the right side  of equation (\ref{pomeriggio}), as desired.

If $u \lhd M(u) \in W^H$, we separate two cases. 
If either $\ell(w_2)-\ell(u_2) \geq 5$, or $\ell(w_2)-\ell(u_2) = 4$ and  $\{x\in \{s,t\}: x\leq w_3\}=\emptyset$, then 
$$(q-1)  R^{H,x}_{u_2u_3,M(w_2)w_3} (q)+  q   R^{H,x}_{M(u_2)u_3,M(w_2)w_3} (q)$$ is equal to
\begin{eqnarray*}
 &=& (q-1)  (q-1) (q-1-x) ^{\ell(M(w_2)) - \ell(u_2)-2}  [(q-1-x)  R^{H,x}_{u_3,w_3} (q) + 
q    R^{H,x}_{ru_3,w_3} (q) ]+\\
& & q (q-1) (q-1-x) ^{\ell(M(w_2)) - \ell(M(u_2))-2}    [(q-1-x)R^{H,x}_{u_3,w_3} (q) + 
 q   R^{H,x}_{ru_3,w_3} (q) ] \\
&=&  (q-1) (q-1-x) ^{\ell(w_2) - \ell(u_2)-3}   [ (q-1) (q-1-x) +q]   R^{H,x}_{u_3,w_3} (q)  + \\
& & q(q-1) (q-1-x) ^{\ell(w_2) - \ell(u_2)-4}   [ (q-1) (q-1-x) +q]   R^{H,x}_{ru_3,w_3} (q)   \\
&=&  (q-1) (q-1-x) ^{\ell(w_2) - \ell(u_2)-2}    [(q-1-x)R^{H,x}_{u_3,w_3} (q)  + 
 q R^{H,x}_{ru_3,w_3} (q)]
\end{eqnarray*}
 by Lemma~\ref{protutti} and Eq. (\ref{1}), as desired.

If $\ell(w_2)-\ell(u_2) = 4$ and  $|\{x\in \{s,t\}: x\leq w_3\}|=1$, then, by Lemma~\ref{protutti}, 
$(q-1)  R^{H,x}_{u_2u_3,M(w_2)w_3} (q)+  q   R^{H,x}_{M(u_2)u_3,M(w_2)w_3} (q)$ is equal to
\begin{eqnarray*}
 &=& (q-1)  (q-1) (q-1-x)   [ (q-1-x)R^{H,x}_{u_3,w_3} (q) + q   R^{H,x}_{ru_3,w_3} (q) ]+\\
& & q[ (q-1) (q-1-x)    R^{H,x}_{u_3,w_3} (q) + q  A  R^{H,x}_{ru_3,w_3} (q)  ] \\
 &=& (q-1) (q-1-x) [ (q-1) (q-1-x) + q]  R^{H,x}_{u_3,w_3} (q) + \\
& & q[ (q-1)^2 (q-1-x) +  q A]    R^{H,x}_{ru_3,w_3} (q)\\
&=& (q-1) (q-1-x)^3   R^{H,x}_{u_3,w_3} (q) +  q[ (q-1)^2 (q-1-x) +  q A]   R^{H,x}_{ru_3,w_3} (q)   
\end{eqnarray*}
where $A= 
\left\{
\begin{array}{ll}
(q-1-x),  &   \text{if $r\in D_R(w_2)$}\\
(q-1),  &   \text{if $r\notin D_R(w_2)$}
\end{array}
\right. 
$ and the last equation holds by  Eq. (\ref{1}).
 If $r\cdot u_3 \not\leq w_3$, we are done.
Let us show that $r \cdot u_3$ cannot be  smaller than or equal to $ w_3$ by contradiction. We would have $r\leq w_3$ (so $r=p$) and $M$ would commute with $\rho_r$ on $[e,w_0(s,t)]$: on the other hand, $M(x)\neq xr$ for all $x\in W_{\{s,t\}}$ with $\ell(x)\neq 0,1,m_{s,t}-1, m_{s,t}$, since $M$ is $H$-special and $M(x\cdot u_3)$ cannot be $xr \cdot u_3$. By Lemma~\ref{nicola}, these two facts together would imply that $M$ is a left multiplication matching on $[e,w_0(s,t)]$, and $M$ would be a left multiplication matching on $[e,w]$, which is a contradiction.

\bigskip
Suppose   $\ell(w_2)-\ell(u_2) =3$.
 By Lemma~\ref{protutti}
\begin{eqnarray*}
 R^{H,x}_{u,w} (q)  &= &  (q-1) (q-1-x)    [(q-1-x) R^{H,x}_{u_3,w_3} (q) +  q R^{H,x}_{ru_3,w_3} (q)].
\end{eqnarray*}

If $M(u)\lhd u$, then we conclude by Lemma~\ref{protutti}.  

If $u \lhd M(u) \notin W^H$, then 
$$
R_{u,M(w)}^{H,x}(q)=  \left\{
\begin{array}{ll}
(q-1-x) [(q-1)   R^{H,x}_{u_3,w_3} (q)+ q R^{H,x}_{ru_3,w_3} (q) ],  &   \text{if $r\in D_R(w_2)$}\\
 (q-1) [(q-1-x)   R^{H,x}_{u_3,w_3} (q)+ q R^{H,x}_{ru_3,w_3} (q) ],  &   \text{if $r\notin D_R(w_2)$}
\end{array}
\right.
$$
by Lemma~\ref{protutti}. If $r \cdot u_3 \not\leq w_3$, we are done.

Let us show that $r \cdot u_3$ cannot be  smaller than or equal to $ w_3$ by contradiction. 

We would have $r=p$, and $M$ would commute with $\rho_r$ on $[e,w_0(s,t)]$ since $r \cdot u_3 \leq w_3$ implies $r\leq w_3$: on the other hand, $M(x)\neq xr$ for all $x\in W_{\{s,t\}}$ with $\ell(x)\neq 0,1,m_{s,t}-1, m_{s,t}$, since $M$ is $H$-special and $M(x\cdot u_3)$ cannot be $xr\cdot u_3$. By Lemma~\ref{nicola}, these two facts together would imply that $M$ is a left multiplication matching on $[e,w_0(s,t)]$
and so $M$ would be a left multiplication matching also on $[e,w]$, which is impossible.

If $u \lhd M(u) \in W^H$,  then
\begin{itemize}
\item $M(u_2)=l \cdot u_2$, where $l\in \{s,t\}\setminus D_{L}(u_2)$ and $l \cdot u_2$ is not the longest element in $W_{\{s,t\}}$ (if any),
\item $M(u_2\cdot \bar{r})= l\cdot u_2\cdot \bar{r}$ since $M$ is $H$-special (as otherwise we would have $M(u_2\cdot\bar{r}\cdot u_3)= \bar{l}\cdot l \cdot u_2 \cdot u_3\in W^H$, with $u_2\cdot \bar{r}\cdot u_3\notin W^H$).
\end{itemize}
Hence the only possibility is $M(w_2)= \bar{l}\cdot l \cdot u_2$ (recall  $M(w_2)\lhd w_2$) and
 $w_2\in\{ l \cdot \bar{l} \cdot l \cdot u_2,  \bar{l} \cdot l \cdot u_2 \cdot \bar{r}\}$. But $w_2= l \cdot \bar{l} \cdot l \cdot u_2$ is not allowed 
  since $M$ would agree with  $\lambda_l$  on both $w$ and $u$, which is impossible.
Thus  $w_2=   \bar{l} \cdot l \cdot u_2 \cdot \bar{r}\neq l \cdot \bar{l} \cdot l \cdot u_2$, and 
$$(q-1)  R^{H,x}_{u_2u_3,M(w_2)w_3} (q)+  q   R^{H,x}_{M(u_2)u_3,M(w_2)w_3} (q)$$ is equal to
\begin{eqnarray*}
   &=& (q-1)  R^{H,x}_{u_2u_3, \bar{l} l u_2w_3} (q)+  q   R^{H,x}_{l u_2u_3, \bar{l} l u_2w_3} (q) \\
   &=& (q-1)  (q-1-x)[(q-1) R^{H,x}_{u_3,w_3} (q) + q   R^{H,x}_{ru_3,w_3} (q) ]+ q(q-1) R^{H,x}_{u_3,w_3} (q)\\
& =& (q-1) [ (q-1) (q-1-x)+q]   R^{H,x}_{u_3,w_3} (q) + q  (q-1) (q-1-x) R^{H,x}_{ru_3,w_3} (q)\\
&= & (q-1) (q-1-x)^2   R^{H,x}_{u_3,w_3} (q) + q  (q-1) (q-1-x) R^{H,x}_{ru_3,w_3} (q)
\end{eqnarray*}
 by Lemma~\ref{protutti} and Eq. (\ref{1}), as desired.

\bigskip
Suppose   $\ell(w_2)-\ell(u_2) =2$.

If $M(u) \lhd u $,  then $M(u_2)=lu_2$ with $l\in D_L(u_2)$  (since $M$ is $H$-special),  $M(M(u_2)\cdot \bar{r})= l \cdot M(u_2) \cdot \bar{r}=u_2 \cdot \bar{r}$, $M(w_2)= M(u_2) \cdot \bar{r} \cdot r$, and $w_2$ cannot be $ l \cdot \bar{l} \cdot u_2 $ since otherwise $M$ and  $\lambda_l$ would agree on both $w$ and $u$. Thus  $w_2= \bar{l} \cdot u_2 \cdot \bar{r} \neq  l \cdot \bar{l} \cdot u_2$, and $w_2= M \circ \rho_r (M(u_2)\cdot \bar{r}) \neq \rho_r \circ M (M(u_2)\cdot \bar{r})$, which implies $r\not\leq w_3$: hence $R^{H,x}_{u,w} (q) =R^{H,x}_{M(u),M(w)} (q)$ since they are both equal to  
$$  (q-1) (q-1-x) R^{H,x}_{u_3,w_3} (q)$$
by Lemma~\ref{protutti}.

If $u \lhd M(u) \notin W^H$, then $M(u_2)= u_2 \cdot \bar{r}$, $w_2\in \{l\cdot u_2 \cdot \bar{r}, \bar{l} \cdot l \cdot u_2\}$, and $M(w_2)= l \cdot u_2$, where 
$l\in \{s,t\} \setminus  D_L(u_2)$ if $u_2\neq e$ and $l= r$ if $u_2=e$.
We have
\begin{eqnarray*}
R_{u,M(w)}^{H,x}(q) &=& R_{u_2u_3,lu_2w_3}^{H,x}(q)=  (q-1)   R^{H,x}_{u_2u_3,u_2w_3} (q)+ q R^{H,x}_{lu_2u_3,u_2w_3} (q) \\
&=& (q-1)   R^{H,x}_{u_3,w_3} (q)+ q R^{H,x}_{lu_2u_3,u_2w_3} (q) 
\end{eqnarray*}
where the last term is $0$ unless $u_2=e$ (and so $l=r$) and $r \cdot u_3\leq w_3$ (so $r=p$).
If $\{x\in\{s,t\}: x \leq w_3\}$ is either empty or  $\{\bar{r}\}$, we conclude by Lemma~\ref{protutti}. If  $\{x\in\{s,t\}: x \leq w_3\}=\{r\}$,  then $M$ must commute with $\rho_r$ on $[e, w_0(s,t)]$: this implies $u_2=e$ (since $\rho_r \circ M(u_2)= u_2\cdot \bar{r} \cdot r$ while  $ \rho_r (y)\lhd y$ for all $y\in (W^H\cap W_{\{s,t\}})\setminus \{e\}$), and 
  $w_2=\bar{r}\cdot  r\cdot u_2= \bar{r}\cdot  r$ (since $\bar{r}\cdot  r$ must be the element of length 2 in $W_{\{s,t\}}$ covering its matched element as otherwise $\rho_r \circ M(e)$ could not be equal to $ M \circ \rho_r(e)$). The assertion then follows.

If $u \lhd M(u) \in W^H$,  then $M(u_2)= l \cdot u_2$ with $l \in \{s,t\} \setminus  D_L(u_2)$ (and $l=r$ if $u_2=e$),  and $M(w_2)= u_2 \cdot \bar{r}$. Moreover, $w_2\neq l \cdot  u_2 \cdot  \bar{r}$ (as otherwise $M$ and  $\lambda_l$ would agree on both $w$ and $u$). Hence $ w_2$ should be equal to $  \bar{l}\cdot  l \cdot  u_2$ but also this is not possible since the element $w_2\cdot u_3=\bar{l} \cdot  l \cdot  u_2 \cdot  u_3 \in W^H$ (which belongs to $[e,w]$ since $u_3 \leq w_3$) would be matched with $u_2 \cdot \bar{r} \cdot u_3\notin W^H$, and this contradicts the definition of $H$-special.

\bigskip
Suppose   $\ell(w_2)-\ell(u_2) =1$.

If $M(u) \lhd u $,  then $M(u_2)=lu_2$ with $l\in D_L(u_2)$ (since $M$ is $H$-special), and $M(w_2)=(lu_2) \cdot \bar{r}$. Now $w_2 \in \{ \bar{l} \cdot u_2, u_2 \cdot \bar{r} \}$, but actually both possibilities are not permitted.  On one hand, $w_2$ cannot be  $u_2 \cdot \bar{r}$, since otherwise  $M$ and $\lambda_l$  would agree on both $u$ and $w$. On the other hand, $w_2 \neq \bar{l} \cdot u_2$ since otherwise $\bar{l} \cdot u_2  \cdot u_3 \rhd M(\bar{l} \cdot u_2  \cdot u_3)= M(\bar{l} \cdot u_2)  \cdot u_3=  lu_2  \cdot \bar{r}  \cdot u_3 \notin W^H$, with $\bar{l}u_2  \cdot u_3 \in W^H$, which is impossible since $M$ is $H$-special. 

If $u\lhd M(u)$, then $M(u_2)=w_2$ since $M$ is a special matching. The element $w_2$ cannot be  $l\cdot u_2$, with $l\notin D_L(u)$, since otherwise  $M$ and $\lambda_{l}$ would agree on both $u$ and $w$. Thus $w_2= u_2 \cdot \bar{r}$ and 
$$R^{H,x}_{u,w} (q)=  (q-1-x) R^{H,x}_{u_3,w_3} (q)$$
 by Lemma~\ref{protutti}: on the other hand, $M(u)=w_2 \cdot u_3 =u_2 \cdot \bar{r} \cdot u_3\notin W^H$ and 
$$ R^{H,x}_{u,M(w)} (q)=R^{H,x}_{u_2u_3,u_2 w_3} (q)= R^{H,x}_{u_3,w_3} (q),$$ and the assertion follows.

\bigskip
Suppose  $\ell(w_2)-\ell(u_2) =0$.

If $u_2=w_2$, then the result is clear. Otherwise $r\leq w_3$ and $M$ commutes with $\rho_r$: we have $(M(w_2),M(u_2))= (l w_2 ,  l u_2)$,  with $l\in D_L(w_2)\setminus D_L(u_2)$, and hence $M$ coincides with $\lambda_{l}$  on both $u$ and $w$, which is a contradiction.

\bigskip
Suppose   $\ell(w_2)-\ell(u_2) =-1$.

Necessarily $u_2=w_2\cdot r= w_0(s,t)$ and Lemma \ref{5.3} implies $M(u_2)\lhd u_2$. Clearly, we also have    $M(w_2)\lhd w_2$. Hence $R^{H,x}_{u,w} (q)$ and $R^{H,x}_{M(u),M(w)} (q)$ coincide, since they are both equal to $R^{H,x}_{ru_3,w_3} (q)$ by Lemma~\ref{protutti}. 

\bigskip
The proof is completed.
\end{proof}

We illustrate Theorems~\ref{elena}, \ref{congettura}, and \ref{computa} with an example. Let $W$ be the Coxeter group of type $A_3$ with Coxeter generators $s_1$, $s_2$ and $s_3$ numbered as usual (i.e. $m_{s_1,s_2)}=m_{s_2,s_3}=3$ and $m_{s_1,s_3}=2$). Let $H=\{s_2\}$, $w=s_1s_2s_3s_1 \in W^H$, and $u=s_1 \in W^H$. 

Suppose that we want to compute $R_{u,w}^{H,x}(q)$ but we only know the isomorphism class of the poset $[e,w]$ and which elements of $[e,w]$ belong to $W^H$ and which do not. In other words, we know the pieces of information that we can detect from Figure~\ref{figura}, where the elements represented by full (respectively, empty) bullets belong to (respectively, do not belong to) $W^H$.
\begin{figure}[h]
\begin{center}
$$
\begin{tikzpicture}

\draw[fill=black]{(0,0) circle(3pt)}; \node[below] at (0,-0.1){$e$};

\draw[fill=black]{(-1,1) circle(3pt)};
\draw[fill=black]{(1,1) circle(3pt)}; \node[right] at (1.1,1){$u$};
\draw[fill=white]{(0,1) circle(3pt)};

\draw[fill=black]{(-1.5,2) circle(3pt)};
\draw[fill=black]{(-0.5,2) circle(3pt)};
\draw[fill=black]{(0.5,2) circle(3pt)};
\draw[fill=white]{(1.5,2) circle(3pt)};

\draw[fill=black]{(-1,3) circle(3pt)};
\draw[fill=black]{(1,3) circle(3pt)};
\draw[fill=white]{(0,3) circle(3pt)};

\draw[fill=black]{(0,4) circle(3pt)};
\node[above] at (0,4.1){$w$};

 \draw{(0,0)--(0,0.9)}; 
 \draw{(-1,1)--(0,0)}; 
  \draw{(1,1)--(0,0)}; 

  \draw{(-1,1)--(-0.5,2)}; 
   \draw{(-1,1)--(0.5,2)}; 
    \draw{(-0.05,1.1)--(-0.5,2)}; 
     \draw{(0.11,1.07)--(1.42,1.95)}; 
      \draw{(-0.07,1.05)--(-1.5,2)};
\draw{(1,1)--(-1.5,2)}; 
\draw{(1,1)--(0.5,2)}; 
\draw{(1,1)--(1.45,1.9)}; 

\draw{(-1.5,2)--(-1,3)}; 
\draw{(-1.5,2)--(-0.08,2.92)}; 
\draw{(-0.5,2)--(-1,3)}; 
\draw{(-0.5,2)--(1,3)}; 
\draw{(0.5,2)--(-1,3)};
\draw{(0.5,2)--(1,3)}; 
\draw{(1.45,2.1)--(1,3)}; 
\draw{(1.415,2.05)--(0.1,2.95)}; 

\draw{(0,4)--(1,3)};
\draw{(0,4)--(0,3.1)};
\draw{(0,4)--(-1,3)};

\end{tikzpicture}$$
\end{center}
\caption{\label{figura} The Hasse diagram of $[e,w]$ and how $[e,w]^H$ embeds in $[e,w]$.}
\end{figure}
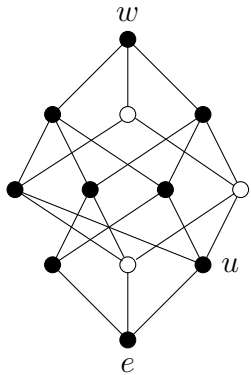

In order to compute $R_{u,w}^{H,x}(q)$ using Theorem~\ref{computa}, we need an $H$-special matching $M$ of $w$. There are 3 of them: we choose, for instance, the dashed $H$-special matching depicted in the first picture in Figure~\ref{accoppiamento}. Hence
$$ R_{u,w}^{H,x}(q)=(q-1-x) R_{u,M(w)}^{H,x}(q).$$
 
Now we need an $H$-special matching $N$ of $M(w)$, and we choose the dashed $H$-special matching depicted in the second picture in Figure~\ref{accoppiamento}. Hence
$$ R_{u,M(w)}^{H,x}(q)= q R_{N(u),NM(w)}^{H,x}(q)+ ( q-1) R_{u,NM(w)}^{H,x}(q)=( q-1) R_{u,NM(w)}^{H,x}(q).$$

Finally,  we need an $H$-special matching of $NM(w)$, and we choose the dashed $H$-special matching depicted in the third picture in Figure~\ref{accoppiamento}. Hence
$$ R_{u,NM(w)}^{H,x}(q)=q  R_{O(u),ONM(w)}^{H,x}(q)+ (q-1)  R_{u,ONM(w)}^{H,x}(q)=(q-1)  R_{u,ONM(w)}^{H,x}(q).$$

Since $u=ONM(w)$ we have $R_{u,ONM(w)}^{H,x}(q)=1$, and the computation yields
$$ R_{u,w}^{H,x}(q)=(q-1-x)(q-1)^2.$$

\begin{figure}[h]
\begin{center}
$$
\begin{tikzpicture}

\draw[fill=black]{(0,0) circle(3pt)};

\draw[fill=black]{(-1,1) circle(3pt)};
\draw[fill=black]{(1,1) circle(3pt)};
\draw[fill=white]{(0,1) circle(3pt)};

\draw[fill=black]{(-1.5,2) circle(3pt)};
\draw[fill=black]{(-0.5,2) circle(3pt)};
\draw[fill=black]{(0.5,2) circle(3pt)};
\draw[fill=white]{(1.5,2) circle(3pt)};

\draw[fill=black]{(-1,3) circle(3pt)};
\draw[fill=black]{(1,3) circle(3pt)};
\draw[fill=white]{(0,3) circle(3pt)};

\draw[fill=black]{(0,4) circle(3pt)};

 \draw{(0,0)--(0,0.9)}; 
 \draw{(-1,1)--(0,0)}; 
  \draw{(1,1)--(0,0)}; 

  \draw{(-1,1)--(-0.5,2)}; 
   \draw{(-1,1)--(0.5,2)}; 
    \draw{(-0.05,1.1)--(-0.5,2)}; 
     \draw{(0.11,1.07)--(1.42,1.95)}; 
      \draw{(-0.07,1.05)--(-1.5,2)};
\draw{(1,1)--(-1.5,2)}; 
\draw{(1,1)--(0.5,2)}; 
\draw{(1,1)--(1.45,1.9)}; 

\draw{(-1.5,2)--(-1,3)}; 
\draw{(-1.5,2)--(-0.08,2.92)}; 
\draw{(-0.5,2)--(-1,3)}; 
\draw{(-0.5,2)--(1,3)}; 
\draw{(0.5,2)--(-1,3)};
\draw{(0.5,2)--(1,3)}; 
\draw{(1.45,2.1)--(1,3)}; 
\draw{(1.415,2.05)--(0.1,2.95)}; 

\draw{(0,4)--(1,3)};
\draw{(0,4)--(0,3.1)};
\draw{(0,4)--(-1,3)};
  
\draw[dashed, line width=3pt]{(0,0)--(0,0.9)};
\draw[dashed, line width=3pt]{(1,1)--(1.45,1.9)};
 \draw[dashed, line width=3pt]{(-1,1)--(-0.5,2)};
 \draw[dashed, line width=3pt]{(1,3)--(0.5,2)};
  \draw[dashed, line width=3pt]{(-1,3)--(0,4)};
  \draw[dashed, line width=3pt]{(-1.5,2)--(-0.1,2.91)};
  
  \draw[fill=black]{(5,0) circle(3pt)};

\draw[fill=black]{(4,1) circle(3pt)};
\draw[fill=black]{(6,1) circle(3pt)};
\draw[fill=white]{(5,1) circle(3pt)};

\draw[fill=black]{(3.5,2) circle(3pt)};
\draw[fill=black]{(4.5,2) circle(3pt)};
\draw[fill=black]{(5.5,2) circle(3pt)};
\draw[fill=white]{(6.5,2) circle(3pt)};

\draw[fill=black]{(4,3) circle(3pt)};
\draw[fill=black]{(6,3) circle(3pt)};
\draw[fill=white]{(5,3) circle(3pt)};

\draw[fill=black]{(5,4) circle(3pt)};

 \draw{(5,0)--(5,0.9)}; 
 \draw{(4,1)--(5,0)}; 
  \draw{(6,1)--(5,0)}; 

  \draw{(4,1)--(4.5,2)}; 
   \draw{(4,1)--(5.5,2)}; 
    \draw{(4.95,1.1)--(4.5,2)}; 
     \draw{(5.11,1.07)--(6.42,1.95)}; 
      \draw{(4.93,1.05)--(3.5,2)};
\draw{(6,1)--(3.5,2)}; 
\draw{(6,1)--(5.5,2)}; 
\draw{(6,1)--(6.45,1.9)}; 

\draw{(3.5,2)--(4,3)}; 
\draw{(3.5,2)--(4.92,2.95)}; 
\draw{(4.5,2)--(4,3)}; 
\draw{(4.5,2)--(6,3)}; 
\draw{(5.5,2)--(4,3)};
\draw{(5.5,2)--(6,3)}; 
\draw{(6.45,2.1)--(6,3)}; 
\draw{(6.415,2.05)--(5.1,2.95)}; 

\draw{(5,4)--(6,3)};
\draw{(5,4)--(5,3.1)};
\draw{(5,4)--(4,3)};

\draw[dashed, line width=3pt]{(4,3)--(5.5,2)};
\draw[dashed, line width=3pt]{(3.5,2)--(6,1)};
\draw[dashed, line width=3pt]{(4.5,2)--(4,1)};
\draw[dashed, line width=3pt]{(5,0)--(5,0.9)};

  \draw[fill=black]{(10,0) circle(3pt)};

\draw[fill=black]{(9,1) circle(3pt)};
\draw[fill=black]{(11,1) circle(3pt)};
\draw[fill=white]{(10,1) circle(3pt)};

\draw[fill=black]{(8.5,2) circle(3pt)};
\draw[fill=black]{(9.5,2) circle(3pt)};
\draw[fill=black]{(10.5,2) circle(3pt)};
\draw[fill=white]{(11.5,2) circle(3pt)};

\draw[fill=black]{(9,3) circle(3pt)};
\draw[fill=black]{(11,3) circle(3pt)};
\draw[fill=white]{(10,3) circle(3pt)};

\draw[fill=black]{(10,4) circle(3pt)};

 \draw{(10,0)--(10,0.9)}; 
 \draw{(9,1)--(10,0)}; 
  \draw{(11,1)--(10,0)}; 

  \draw{(9,1)--(9.5,2)}; 
   \draw{(9,1)--(10.5,2)}; 
    \draw{(9.95,1.1)--(9.5,2)}; 
     \draw{(10.11,1.07)--(11.42,1.95)}; 
      \draw{(9.93,1.05)--(8.5,2)};
\draw{(11,1)--(8.5,2)}; 
\draw{(11,1)--(10.5,2)}; 
\draw{(11,1)--(11.45,1.9)}; 

\draw{(8.5,2)--(9,3)}; 
\draw{(8.5,2)--(9.92,2.95)}; 
\draw{(9.5,2)--(9,3)}; 
\draw{(9.5,2)--(11,3)}; 
\draw{(10.5,2)--(9,3)};
\draw{(10.5,2)--(11,3)}; 
\draw{(11.45,2.1)--(11,3)}; 
\draw{(11.415,2.05)--(10.1,2.95)}; 

\draw{(10,4)--(11,3)};
\draw{(10,4)--(10,3.1)};
\draw{(10,4)--(9,3)};

\draw[dashed, line width=3pt]{(11,1)--(10.5,2)};
\draw[dashed, line width=3pt]{(10,0)--(9,1)};
  
\end{tikzpicture}$$
\end{center}
\caption{\label{accoppiamento} $H$-special matchings.}
\end{figure}
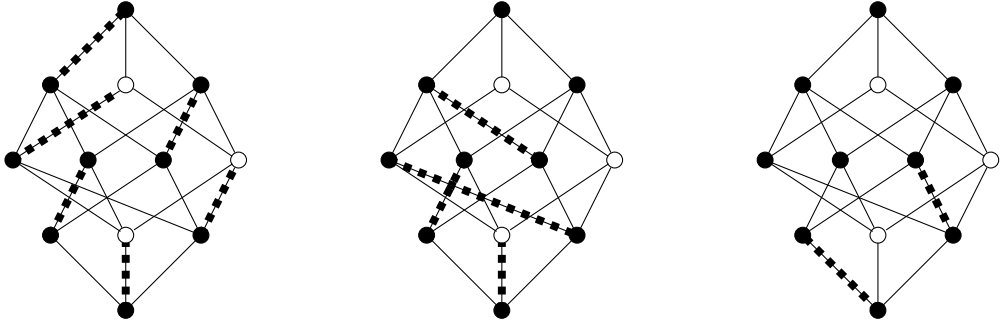

\end{document}